\begin{document}
\renewcommand*{\max}{\mathrm{max}}

\newcommand{\C}{\mathbb{C}}
%
%
%
%
%

	\title{Uniqueness of trace and C*-simplicity beyond regular representation}
	
	
	
	\author{Massoud Amini}
	
	
	%

	\institute{{\bf M. Amini}\at
		STEM Complex, 150 Louis-Pasteur Pvt,
		Ottawa, ON, Canada K1N 6N5\\
		\email{samin089@uottawa.ca}
}
	
	\date{Received: date / Accepted: date}

\maketitle
	
	\begin{abstract}
	A discrete group $\Gamma$ is C*-simple if the C*-algebra $C_\lambda^*(\Gamma)$ generated by the range of the left regular representation $\lambda$ on $\ell^2(\Gamma)$ is simple. In this case, $\Gamma$ acts faithfully on the Furstenberg boundary $\partial_F\Gamma$ and there is a unique trace on $C_\lambda^*(\Gamma)$. In this paper we study the unique trace property for the C*-algebra $C_\pi^*(\Gamma)$ generated by the range of an arbitrary unitary  representation $\pi: \Gamma\to B(H_\pi) $ and relate it to the faithfulness of the action of $\Gamma$ on the Furstenberg-Hamana boundary $\mathcal B_\pi$. Similar relation is obtained between simplicity of  $C_\pi^*(\Gamma)$ and (topological) freeness of the action of $\Gamma$ on $\mathcal B_\pi$. Along the way, we extend the Connes-Sullivan and Powers averaging properties for a unitary representation $\pi$ and relate them to simplicity and unique trace property of $C^*_\pi(\Gamma)$.

		\keywords{ unitary representation \and Furstenberg-Hamana boundary \and unique trace property \and simplicity \and Connes-Sullivan property \and Powers averaging property}
		
		\subclass{22D25, 16D30}
	\end{abstract}

\section{Introduction} \label{int}

The notion of boundary actions were introduced by Furstenberg \cite{f1} \cite{f2}, as a tool to study rigidity properties of
semisimple Lie groups. Recently, the Furstenberg idea has reemerged in the study of  rigidity properties of  reduced group C*-algebras generated by the regular representation of discrete groups. A discrete group $\Gamma$ acts canonically on its Furstenberg boundary $\partial_F\Gamma$ and the
dynamical properties of this action is known to be related to problems of simplicity, uniqueness of trace, and nuclear
embeddings \cite{bkko}, \cite{kk}, \cite{l}. The link between boundary actions and reduced
group C*-algebra was confirmed after the basic observation of Kalantar and Kennedy \cite{kk} that the Furstenberg and Hamana boundaries coincide, namely $C(\partial_F\Gamma) = I_\Gamma(\mathbb C)$,
where the right hand side is the $\Gamma$-injective envelope of complex numbers in the sense of
Hamana \cite{h2}. The properties of $\partial_F\Gamma$ are related to certain problems of interest in operator algebras. It is related to the
Ozawa's notion of ``tight nuclear embedding'' \cite{o2} of an
exact C*-algebra in a nuclear C*-algebra, which in turn embeds in the injective envelop of the original exact C*-algebra. Ozawa
proved such a tight embedding for the reduced C*-algebra of the free group $\mathbb F_n$, and Kalantar and Kennedy extended his result to an arbitrary discrete exact group $\Gamma$, by showing that the reduced C*-algebra $C_\lambda^*(\Gamma)$ of $\Gamma$ tightly embeds in the (nuclear) C*-algebra
generated by  $C_\lambda^*(\Gamma)$ and $C(\partial_F\Gamma)$ in $B(\ell^2(\Gamma))$.
The other problems are those of C*-simplicity and unique trace property for the reduced C*-algebra. Again it
is already known by a result of Powers \cite{po} that the free group $\mathbb F_2$ is
C*-simple and has the unique trace property. More general cases where handled using dynamical properties of the Furstenberg boundary: a
discrete group $\Gamma$ is C*-simple iff its action on the Furstenberg boundary $\partial_F\Gamma$
is topologically free \cite{kk} and has
the unique trace property iff this action is
faithful \cite{bkko}, and there are faithful, non free actions on the Furstenberg boundary \cite{l} (note that the first statement fails with ``topological freeness'' replaced by ``freeness'': indeed, given a non-C*-simple group $G$ with the unique trace property, for the canonical action of $G$ on its Furstenberg boundary, let $H$ be stabilizer of a point, which is known to be amenable. Then the C*-algebra generated by the
quasi-regular representation of $G/H$ is simple by a result of Kawabe, but
the Hamana-Furstenberg boundary of $G/H$ coincides with the Furstenberg boundary
of G, and in particular, the action of $G$ is not free.) This suggests that ``non triviality''
of the action on the Furstenberg boundary $\partial_F\Gamma$ (such as  being faithful or free) somehow measures the ``distance'' of $\Gamma$ from being amenable (c.f. \cite[Introduction]{bek}).

In an attempt to formulate the relation between ``boundaries'' with rigidity properties of the C*-algebra generated by the range of arbitrary representations, Bearden and Kalantar  associated a boundary $\mathcal B_\pi$ to  a unitary representation $\pi$  of $\Gamma$. They  related  properties of $\mathcal B_\pi$ (and action of $\Gamma$ on that) to rigidity properties of the C*-algebra $C_\pi^*(\Gamma)$ generated by the range of $\pi$. Bearden and Kalantar defined $\mathcal B_\pi$ as a
``relative'' Hamana $\Gamma$-injective envelop of the inclusion $\mathbb C{\rm id}_{H-\pi}\subseteq B(H_\pi)$,  naturally identified
with a $\Gamma$-invariant subspace of $B(H_\pi)$ and used the Choi-Effros product to make $\mathcal B_\pi$ into a C*-algebra. Back to the discussion of the previous paragraph, since amenability of $\Gamma$ characterizes by the existence of a translation invariant unital positive linear map: $\ell^\infty(\Gamma)\to \mathbb C$, given an arbitrary unitary representation $u$ of $\Gamma$, it is natural to seek for a ``minimal''
unital $\Gamma$-equivariant projection: $B(H_\pi)\to B(H_\pi)$, where now positivity is naturally replaced by complete positivity.
This is inline with the basic idea of Hamana in constructing the $\Gamma$-boundary \cite[Theorem 3.11]{h2} and
the idea of Bekka to define amenability of unitary representations \cite{b}. By such an analogy, Bearden and Kalantar defined $\mathcal B_\pi$ as the range of a minimal u.c.p. $\Gamma$-equivariant
projection.

In this paper, we show that the Furstenberg-Hamana boundary could be used to characterize simplicity and unique trace property of $C_\pi^*(\Gamma)$. The paper is organized as follows. In section \ref{pre} we recall the needed basic notions and concepts. Section \ref{bdry} reviews the construction of the Furstenberg-Hamana boundary $\mathcal B_\pi$ of a unitary representation $\pi$ of a discrete group $\Gamma$, used extensively in the rest of the paper. In section \ref{utp}, we study the uniqueness of trace on $C^*_\pi(\Gamma)$ and show that when $\pi$ is weakly contained in the left regular representation, this holds exactly when $\Gamma$ acts faithfully on $\mathcal B_\pi$. In this situation, if moreover $\mathcal B_\pi$ is C*-embeddable in the sense of \cite{kk}, we show in section \ref{sim} that $C^*_\pi(\Gamma)$ is simple only if  $\Gamma$ acts topologically free on $\mathcal B_\pi$, and the converse also holds when $\Gamma$ is exact. This is shown to be related to the Connes-Sullivan property for $\pi$. Section \ref{ex} motivates the paper by providing concrete examples of the situations where certain assumptions in our results hold for some classes of representations, including those coming from boundary actions or groupoid representations. This section also anticipates section \ref{ncbdry}, in which the classical $\Gamma$-boundaries are replaced by $\Gamma$-C*-algebras and topological freeness of the $\Gamma$-action on these noncommutative boundaries is explored.

\section{Preliminaries} \label{pre}
Let $H$ be a complex Hilbert space. A closed self-adjoint subspace $V$ of $B(H)$ is an {\it operator system} if it contains the identity. A linear map $\phi: V\to W$ between operator systems completely positive (c.p.) if all of its amplifications $\phi^{(n)}: \mathcal M_n(V)\to\mathcal M_n(W)$ are positive in the canonical operator space structure on the matrix algebras in the domain and range, and completely
isometric (c.i.) if these are all isometric. The unital completely positive (u.c.p.), unital completely
isometric (u.c.i),  and contractive completely positive (c.c.p.) maps are defined naturally. The c.c.p. maps are known to be completely contractive (c.c) as well. For a discrete group $\Gamma$, an operator system $V$ is a $\Gamma$-operator system if there is an action of $\Gamma$ on $V$, that is, a group homomorphism from $\Gamma$ to the group of all bijective u.c.i. maps on $V$.
A linear map  $\phi: V\to W$ between operator systems is a $\Gamma$-map if it is
u.c.p. and $\Gamma$-equivariant. A $\Gamma$-embedding is a c.i. $\Gamma$-map. When $W\subseteq V$, a $\Gamma$-projection is an idempotent $\Gamma$-map.

Let $\mathcal C$ be a subcategory of the category of operator systems and u.c.p. maps. An object
$V$ in $\mathcal C$ is injective  if for every c.i. morphism $\iota: U \to W$ and every morphism $\psi : U \to V$ there is a morphisms $\tilde\psi : W \to V$ with $\tilde\psi\circ\iota=\psi$. We are particularly interested in the subcategory of $\Gamma$-operator
systems and $\Gamma$-maps. Hamana has shown that if $V$ is injective operator system then  the  $\Gamma$-operator system $\ell^\infty(\Gamma, V)$ is $\Gamma$-injective \cite[Lemma 2.2]{h2}.

Let $V$ be a $\Gamma$-operator systems. A $\Gamma$-extension of $V$ is a pair $(W , \iota)$ consisting of
a $\Gamma$-operator system $W$ and a c.i. morphism $\iota : V \to W$.
A $\Gamma$-extension $(W , \iota)$ of $V$ is $\Gamma$-injective if $W$ is $\Gamma$-injective. It is $\Gamma$-essential if
for every morphism $\phi : W\to U$ such that $\phi\circ\iota$ is c.i. on $V$, $\phi$ is c.i. on $W$. It is $\Gamma$-rigid if for every morphism $\phi: W\to W$ such that
$\phi\circ\iota=\iota$ on $V$, $\phi$ is the identity map on $W$.

A $\Gamma$-extension of $V$ which is both $\Gamma$-injective and $\Gamma$-essential is called  a $\Gamma$-injective envelope of $V$. Every  $\Gamma$-injective envelope of $V$ is known to be $\Gamma$-rigid \cite[Lemma 2.4]{h}.

Every $\Gamma$-operator system $V$ has an injective $\Gamma$-injective
envelope, denoted by $I_\Gamma(V)$, which is unique up to c.i. $\Gamma$-isomorphism. A unital $\Gamma$-C*-algebra is $\Gamma$-injective as a C*-algebra if and only if it is $\Gamma$-injective as a $\Gamma$-operator system. Since $\ell^\infty(\Gamma, B(H))$ is $\Gamma$-injective,  if $V \subseteq B(H)$ is a $\Gamma$-operator system, then we have a $\Gamma$-embedding of $I_\Gamma(V)$  into $\ell^\infty(\Gamma, B(H))$ and a
$\Gamma$-projection: $\ell^\infty(\Gamma, B(H))\to I_\Gamma(V)$, which makes $I_\Gamma(V)$ into a C*-algebra via the Choi–Effros product.

Let $A$ be a C*-algebra and $V\subseteq A$ be an operator subsystem. If there is a surjective idempotent u.c.p. map $\psi : A \to V$, then the
Choi-Effros product $a \cdot b = \psi(ab)$, turns $V$ into a C*-algebra, which is unique up to isomorphism (does
not depend on the map $\psi$) \cite{ce}.

\section{boundary of representations}\label{bdry}
Let $\pi: \Gamma\to U(H_\pi)$ be a unitary representation of $\Gamma$ and consider the set
$$ \mathfrak M=\lbrace \phi:B(H_\pi)\rightarrow B(H_\pi):  \phi\ {\rm is\ a} \ \Gamma-{\rm map} \rbrace. $$

The set of $\mathfrak M $ with the partial order
$$ \phi\leq\psi \Longleftrightarrow \Vert \phi(x) \Vert \leq \Vert \psi(x) \Vert $$
contains a minimal element $\phi_0$, satisfying the following properties \cite{bek}:

$(i)$
$ \phi_{0}  $ is an idempotent.

$(ii)$ ($\pi$-essentiality)
Every $\Gamma$-map $ \psi: {\rm Im}(\phi_{0}) \rightarrow B(H_\pi) $   is isometric.

$(iii)$
${\rm Im}(\phi_{0}) \subseteq B(H_\pi) $ is minimal among subspace of $ B(H_\pi) $ containing $\mathbb C1$ that are images of $\Gamma$-projections.

$(iv)$ ($\pi$-rigidity)
The identity map is the unique $\Gamma$-map on $ {\rm Im}(\phi_{0})$.

$(v)$ ($\pi$-injectivity)
If $ X \subseteq Y $ are $\Gamma$-invariant subspaces of $ B(H_\pi) $ and $ \psi: B(H_\pi) \rightarrow B(H_\pi) $ is a $\Gamma$-map such that $ \psi(X) \subseteq {\rm Im}(\phi_{0}) $ then there is a $\Gamma$-map $ \tilde{\psi}: B(H_\pi) \rightarrow B(H_\pi) $ such that $ \hat{\psi}(Y) \subseteq {\rm Im}(\phi_{0}) $ and $ \tilde{\psi}=\psi $ on $X$.

In particular, the image of a minimal element of $\mathfrak M $ is unique up
to isomorphism. Now ${\rm Im}( \phi) $ is a $\Gamma$-C*-algebra under the Choi-Effros product, here called the
Furstenberg-Hamana boundary (or simply boundary) of the representation $\pi$, and is denoted by
$\mathcal{B}_{\pi}$.

The notion of amenable unitary representation is introduced by Bekka \cite[Definition 1.1]{b}. A unitary representation $\pi: \Gamma\to B(H)$ is amenable if there is a unital positive $\Gamma$-projection
$\phi : B(H_\pi) \to \mathbb C$, where $\Gamma$ acts on $B(H_\pi)$ by $Ad_\pi$. This is to say that the inclusion $\mathbb C1\subseteq B(H_\pi)$ is $\Gamma$-injective. In this case, the Furstenberg-Hamana boundary is trivial, that is,  $ \mathcal{B}_{\pi}\cong \mathbb C$.
Another aspect of ``triviality'' is the triviality of action. 	Every element of $\Gamma$ with finite conjugacy class acts trivially on $ \mathcal{B}_{\pi}$  \cite[Proposition 3.17]{bek}.

It is natural to expect $ \mathcal{B}_{\pi}$ to behave naturally under the restriction and induction of representations, as well as weak containment of representations. Recall that  a representation $\pi$ is {\it weakly contained} in another  representation $\sigma$, writing $\pi\preceq \sigma$, if the identity map on $C_c(\Gamma)$ extends canonically to a *-homomorphism: $C_\pi^*(\Gamma)\to  C_\sigma^*(\Gamma)$.
There is a
	$\Gamma$-map from $\mathcal{B}_{\sigma}$ to $\mathcal{B}_{\pi}$  if and only if there is a $\Gamma$-map from $B(H_\sigma)$ to $B(H_\pi)$.  In particular,
	if $\pi\preceq \sigma$,  there is a $\Gamma$-map: $ \mathcal{B}_{\sigma}\to\mathcal{B}_{\pi}$.

The last observation shows that the boundary $\mathcal{B}_{\pi}$  is determined by the weak equivalence class of $\pi$; in other words, if two representations are weakly equivalent, then  the corresponding boundaries are isomorphic as $\Gamma$-C*-algebras. The particular case of $\pi\preceq \lambda$ is of special interest, where $\lambda: \Gamma\to U(\ell^2(\Gamma))$ is the left regular representation. In this case, the Furstenberg-Hamana boundary is the same as the Furstenberg boundary, namely, $\mathcal{B}_{\lambda}\cong C(\partial_F\Gamma)$ \cite[Example 3.7]{bek}.

An action of $\Gamma$ on a compact space $X$ is {\it topologically amenable}  if there is a net of continuous maps
$m_i : X \to$ Prob$(\Gamma)$ such that $\Vert gm_i(x) - m_i(gx)\Vert_1\to 0$, as $i\to\infty$, for every $g\in \Gamma$ \cite[Definition 2.1]{d}. Note that the group $\Gamma$ is exact if and only if it admits a topologically amenable action
on a compact space \cite{o}. Also for compact $\Gamma$-spaces $X$ and $Y$ if there is a continuous $\Gamma$-equivariant map:$ Y \to X$ and
$X$ is topologically amenable
then so is $Y$.

In general, there is a
	$\Gamma$-inclusion $\mathcal{B}_{\pi}\hookrightarrow I(C^*_{\pi}(\Gamma))$. The boundary $\mathcal{B}_{\pi}$ is C*-embeddable if there is a *-homomorphic copy of
$\mathcal{B}_{\pi}$ in $B(H_\pi)$. In this case,  $\tilde{\mathcal{B}}_{\pi}$ denotes the C*-algebra generated by $\mathcal{B}_{\pi}\cup \pi(\Gamma)$
in $B(H_\pi)$.

An action of $\Gamma$ on a unital C*-algebra $B$ is (strongly)
topologically amenable if  the action of $\Gamma$ on the Gelfand spectrum of the
center $Z(B)$ of $B$ is topologically amenable (c.f., \cite{d}). A related notion is defined in \cite[Definition 3.4]{bew2}: an action of $\Gamma$ on a  C*-algebra $B$ is strongly
 amenable if there is a net  of norm-continuous, compactly supported, positive type functions $\theta_i : \Gamma \to ZM(A)$ such that $\|\theta_i(e)\|\leq 1$
for all $i$ (where $e$ is the identity of $\Gamma$), and $\theta_i(g)\to 1$,  strictly and uniformly for $g$ in compact
subsets of $\Gamma$. The two notions are equivalent when $B$ is commutative (and unital) \cite[Theorem 5.13]{bew2}. Note that in \cite{bew2} the authors also define the notion of amenable action \cite[Definition 3.4]{bew2}, which is in general weaker than strong amenability.

	If $\mathcal{B}_{\pi}$ is  C*-embeddable and the action of $\Gamma$ on $\mathcal{B}_{\pi}$ is topologically amenable, then $\Gamma$ is an exact group and $\pi\preceq\lambda$. Conversely, if
	$\pi\preceq\lambda$,  then the action of $\Gamma$ on $\mathcal{B}_{\pi}$ is topologically amenable.

We also have the following characterization of strong amenability of the $\Gamma$-action on $\mathcal{B}_{\pi}$ which extends \cite[Theorem
1.1]{kk}: the action of $\Gamma$ on $\mathcal{B}_{\pi}$ is strongly  amenable iff $\Gamma$ is exact.

Following \cite{bek}, for a normal subgroup $\Lambda\unlhd \Gamma$, we say that $\Lambda$ is $\pi$-amenable if there is a $\Gamma$-map $\phi: B(H_\pi)\to \pi(\Lambda)^{'}$, where the commutant is taken inside $B(H_\pi)$ and is known to be $\Gamma$-invariant. The whole group $\Gamma$ is $\pi$-amenable iff $\pi$ is an amenable representation iff all normal subgroups of $\Gamma$ are $\pi$-amenable \cite[Proposition 4.2]{bek}. Also a normal
subgroup $\Lambda\unlhd \Gamma$ is $\lambda$-amenable iff it is amenable \cite[Proposition 4.3]{bek}.

A normal subgroup $\Lambda\unlhd \Gamma$ acts trivially on  $\mathcal{B}_{\pi}$ iff it is $\pi$-amenable. In particular, the kernel of the action  of $\Gamma$ on $\mathcal{B}_{\pi}$ is the unique maximal $\pi$-amenable
normal subgroup of $\Gamma$, which contains all $\pi$-amenable normal subgroups of $\Gamma$. This is called the $\pi$-amenable radical of
$\Gamma$, and is denoted by Rad$_{\pi}(\Gamma)$ \cite[Definition 4.6]{bek}.

Let $V$ be a $\Gamma$-operator system. Then the $\Gamma$-injective envelope $I_\Gamma(V)$ is characterized as a unique maximal
$\Gamma$-essential (or minimal $\Gamma$-injective) $\Gamma$-extension of $V$.
If $I(V)$ is the injective envelope of $V$ as an
operator system, since Aut$V\subseteq$ Aut$I(V)$, one may regard $I(V)$ as a $\Gamma$-extension of $V$. Indeed,  $I(V)$
is a $\Gamma$-essential $\Gamma$-extension of $V$ and $V\subseteq I(V)\subseteq I_\Gamma(V)$ as $\Gamma$-spaces. Moreover,  $I(V)$ is unique among the $\Gamma$-subspaces  of $I_\Gamma(V)$ which are the injective envelope of $V$ \cite[Remark 2.6]{h2}.

Recall that a normal subgroup $\Lambda\unlhd \Gamma$ is called coamenable if there is a $\Gamma$-state on  $\ell^\infty(\Gamma/\Lambda)$.  Every coamenable
	normal subgroup  $\Lambda\unlhd \Gamma$ is $\pi$-coamenable. The converse is also true when $\pi\preceq\lambda$. In particular, a
	normal subgroup  $\Lambda\unlhd \Gamma$ is coamenable iff it is $\lambda$-coamenable.
	
If $\pi$  restricts to an irreducible representation on $\Lambda$, then $\Lambda$ is $\pi$-coamenable, since in this case, $\pi(\Lambda)^{'}$ is trivial. If $\pi$ is amenable, then each
  	normal subgroup  $\Lambda\unlhd \Gamma$ is both $\pi$-amenable and $\pi$-coamenable. Conversely, if there is a normal subgroup  $\Lambda\unlhd \Gamma$ which is both $\pi$-amenable and $\pi$-coamenable, $\pi$ is amenable.
In particular, if $\pi$ is a non amenable
unitary representation of $\Gamma$ whose restriction to a normal subgroup $\Lambda$ is reducible, then $\Lambda$ is not $\pi$-amenable \cite[Corollary 4.13]{bek}.

The Furstenberg boundary has the following extension property in topological dynamics: if $\Lambda$ is a normal subgroup of $\Gamma$, then any action of $\Lambda$ on $\partial_F\Gamma$ extends to an action of $\Lambda$ on $\partial_F\Gamma$. There are analogous extension properties of the Furstenberg-Hamana boundary $\mathcal{B}_{\pi}$.
An automorphism $\tau\in {\rm Aut}(\Gamma)$ is
called a $\pi$-automorphism if it extends to a C*-automorphism of $C^*_{\pi} (\Gamma)$. Following \cite{bek}, we
denote the set of all such $\Gamma$-automorphisms by ${\rm Aut}_{\pi}(\Gamma)$. The automorphism on $C^*_{\pi} (\Gamma)$ induced by a $\Gamma$-automorphism $\tau$ leaves the kernel of the
canonical *-epimorphism: $C^*_{\rm max}(\Gamma)\to C^*_{\pi} (\Gamma)$ invariant. In particular, ${\rm Aut}_{\lambda}(\Gamma)={\rm Aut}(\Gamma)$.

Any inner automorphism is a $\pi$-automorphism, thus, $\Gamma/Z(\Gamma)$ is identified with a normal subgroup of
${\rm Aut}_{\pi}(\Gamma)$. Since $Z(\Gamma)$ acts trivially on  $\mathcal{B}_{\pi}$, it follows that the action of $\Gamma$ on $\mathcal{B}_{\pi}$ factors through $\Gamma/Z(\Gamma)$. Also, 
	The action of $\Gamma$ on $\mathcal{B}_{\pi}$ extends
	to an action of ${\rm Aut}_{\pi}(\Gamma)$
	on $\mathcal{B}_{\pi}$.

Recall that Rad$_{\pi}(\Gamma)$ is kernel of the action of $\Gamma$ on $\mathcal{B}_{\pi}$. Each $\tau\in{\rm Aut}_{\pi}(\Gamma)$ keeps Rad$_{\pi}(\Gamma)$ invariant. This induces a group homomorphism
$$\kappa: {\rm Aut}_{\pi}(\Gamma)\to {\rm Aut}_{\pi}\big(\Gamma/{\rm Rad}_{\pi}(\Gamma)\big),$$
whose kernel is nothing but the kernel of the above unique extended action. In particular, if the action of $\Gamma$ on $\mathcal{B}_{\pi}$ is faithful, then so is its unique extension to an action of ${\rm Aut}_{\pi}(\Gamma)$
on $\mathcal{B}_{\pi}$.

Let	 $\Lambda$ be a normal subgroup of $\Gamma$, and let $\pi_0$ be the restriction of $\pi$ on $\Lambda$. Then the action of $\Lambda$
on $\mathcal{B}_{\pi_0}$ extends to an action of $\Gamma$
on $\mathcal{B}_{\pi_0}$. When the $\Lambda$-action is faithful, the kernel of the $\Gamma$-action is the inverse image of the centralizer of $\pi(\Lambda)$ in $\pi(\Gamma)$.

\section{Uniqueness of Trace}\label{utp}

The groups with the unique trace property are studied in  \cite{bkko}.
 Every trace on the reduced C*-algebra $C_\lambda^*(\Gamma)$ of a discrete group $\Gamma$  is known to be supported on the amenable radical
Rad$(\Gamma)$ \cite[Theorem 4.1]{bkko}, which is the kernel of the action of $\Gamma$ on the Furstenberg boundary $\partial_F\Gamma$.

A trace  on $C^*_\lambda(\Gamma)$ is nothing but a $\Gamma$-map $\tau: C^*_\lambda(\Gamma)\to\mathbb C$, where
$\mathbb C$ carries the trivial action of $\Gamma$. By $\Gamma$-injectivity, $\tau$ extends to a $\Gamma$-map $\tilde\tau: B\big(\ell^2(\Gamma)\big)\to I_\Gamma(\mathbb C)=C(\partial_F\Gamma)$. Since $\mathcal{B}_{\lambda}=C(\partial_F\Gamma)$, we may regard $\tilde\tau$ as a $\Gamma$-map $\tilde\tau: B\big(\ell^2(\Gamma)\big)\to \mathcal{B}_{\lambda}$. Now a trace $\tau$ on $C^*_\lambda(\Gamma)$ is  supported on the normal subgroup Rad$_{\pi}(\Gamma)\unlhd \Gamma$ iff the extension
$\tilde\tau: B\big(\ell^2(\Gamma)\big)\to \mathcal{B}_{\lambda}$ vanishes outside the  kernel of the action of $\Gamma$ on $\mathcal{B}_{\lambda}$.

A trace on $C^*_\pi(\Gamma)$ is called {\it extendable} in \cite{bek}, if it extends to a $\Gamma$-map:  $B(H_\pi)\to  \mathcal{B}_{\pi,u}$. In particular, if $\pi\preceq\lambda$ then any trace on $C^*_\pi(\Gamma)$ is extendable.

\begin{lemma} \label{rad2}
	Let	
	$\psi: B(H_\pi) \to \mathcal{B}_{\pi}$ be a $\Gamma$-map and $g\notin${\rm Rad}$_{\pi}(\Gamma)$, then $\psi(\pi_g)$ is in the kernel of  some state on $\mathcal{B}_{\pi}$.
\end{lemma}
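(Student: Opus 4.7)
The plan is to argue by contradiction. Suppose every state $\rho$ of $\mathcal{B}_\pi$ satisfies $\rho(u)\neq 0$, where $u := \psi(\pi_g)$. Since the state space $S(\mathcal{B}_\pi)$ is weak-$*$ compact and convex and $\rho\mapsto \rho(u)$ is continuous and affine, the numerical range $\{\rho(u) : \rho\in S(\mathcal{B}_\pi)\}$ is a compact convex subset of $\mathbb{C}\setminus\{0\}$. A Hahn--Banach separation then yields $\alpha\in\mathbb{T}$ and $c>0$ with $\mathrm{Re}(\alpha u)\ge c\cdot 1$ in $\mathcal{B}_\pi$, equivalently
\[
\psi\bigl(\alpha \pi_g + \bar\alpha \pi_g^{*}\bigr)\ \ge\ 2c\cdot 1 \quad \text{in } \mathcal{B}_\pi.
\]

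Next I would invoke $\pi$-rigidity (property (iv) in Section \ref{bdry}): the restriction of $\psi$ to $\mathcal{B}_\pi$ is a $\Gamma$-map $\mathcal{B}_\pi\to\mathcal{B}_\pi$, hence equal to $\mathrm{id}$. Combining $\psi|_{\mathcal{B}_\pi}=\mathrm{id}$ with the Kadison--Schwarz inequality for the u.c.p.\ map $\psi$ and the $\Gamma$-equivariance $\psi(\pi_g x\pi_g^*) = \pi_g\psi(x)\pi_g^{*}$, together with the displayed lower bound, the goal is to force $\mathrm{Ad}_{\pi_g}$ to fix every element $b\in\mathcal{B}_\pi$. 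This would show $g\in\mathrm{Rad}_\pi(\Gamma)$, contradicting the hypothesis.

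The main obstacle is precisely this last implication: upgrading positivity of $\mathrm{Re}(\alpha \psi(\pi_g))$ to the commutation identity $\pi_g b = b\pi_g$ for all $b\in\mathcal{B}_\pi$. In the classical case $\pi = \lambda$, where $\mathcal{B}_\lambda = C(\partial_F \Gamma)$ is commutative, this step is handled by a pointwise argument on $\partial_F\Gamma$: the inequality holds at every point and one uses continuity of the action together with a witness at a point where $g$ acts non-trivially to produce the contradiction. In the non-commutative setting, I expect the analogous step to require either iterating Kadison--Schwarz with a matrix amplification applied to the row $(\pi_g\;\;b)$ so that the two-variable Cauchy--Schwarz relates $\psi(\pi_g b^*)$ and $\psi(b\pi_g^*)$ back to the commutator $[\pi_g,b]$, or exploiting the minimality property (iii) of $\phi_0$ to produce a strictly smaller $\Gamma$-invariant image of some modified $\Gamma$-projection, contradicting minimality of $\mathcal{B}_\pi$. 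Carrying out either route in detail is the technical heart of the proof.
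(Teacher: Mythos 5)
Your proposal is not a proof: the step you yourself identify as ``the technical heart'' is left entirely open, and neither of the two routes you sketch for closing it is carried out or even clearly viable. Concretely, after the Hahn--Banach reduction you are left with the inequality $\mathrm{Re}\bigl(\alpha\,\psi(\pi_g)\bigr)\ge c\cdot 1$ and you need to deduce that $\mathrm{Ad}_{\pi_g}$ fixes $\mathcal{B}_\pi$ pointwise. There is no evident mechanism for this: the inequality is a statement about the single element $\psi(\pi_g)$ of $\mathcal{B}_\pi$, while the desired conclusion is a commutation identity quantified over all $b\in\mathcal{B}_\pi$, and the Kadison--Schwarz/amplification and minimality ideas you mention are not developed to the point where one can judge whether they bridge that gap. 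As written, the argument establishes only the (correct but easy) reduction that if no state kills $\psi(\pi_g)$ then some rotation of it is bounded below; everything after that is a declaration of intent.

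The paper's proof goes in the opposite, direct direction and is much shorter: since $g\notin\mathrm{Rad}_\pi(\Gamma)$, there is $x\in\mathcal{B}_\pi$ with $0\le x\le 1$ and $x\neq\pi_g x\pi_g^*$; one chooses a state $\omega$ on $\mathcal{B}_\pi$ with $\omega(x)=1$ and $\omega(\pi_g x\pi_g^*)=0$, sets $\rho:=\omega\circ\psi$ (using $\pi$-rigidity only to know $\psi|_{\mathcal{B}_\pi}=\mathrm{id}$, so that $\rho(x)=1$ and $\rho(\pi_g x\pi_g^*)=0$), and then two applications of the Cauchy--Schwarz inequality for states --- one exploiting $\rho\bigl((1-x)^2\bigr)=0$ and one exploiting $\rho(\pi_g x^2\pi_g^*)=0$ --- give $\rho\bigl((1-x)\pi_g\bigr)=\rho(x\pi_g)=0$, hence $\rho(\pi_g)=0$. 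In other words, the state annihilating $\psi(\pi_g)$ is constructed explicitly from a witness of the non-triviality of the $g$-action; no contradiction argument, numerical range, or commutator analysis is needed. If you want to salvage your approach, you would in effect have to reproduce this construction inside your contradiction framework, at which point the framework is superfluous.
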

\begin{proof}
	Let $g\notin$Rad${}_{\pi}(\Gamma)$, and choose $x\in \mathcal{B}_{\pi}$ with $x\neq \pi_gx\pi_g^*$.
	We may assume that $0\leq x\leq 1$. Choose a state $\omega$ on $\mathcal{B}_{\pi}$ with $\omega(x)=1$ and $\omega(\pi_gx\pi_g^*)=0$.
	Since $\psi$ acts as identity on $\mathcal{B}_{\pi}$, $\omega\circ \psi(x)=1$ and $ \omega\circ\psi(\pi_gx\pi_g^*)=0$. For the state $\rho=\omega\circ \psi$, we  have $ \rho(\pi_gx^2\pi_g^*)= \rho(1-x^2)=0$,  and so by Cauchy-Schwartz inequality,  $ \rho\big((1-x)\pi_g\big)= \rho\big(x\pi_g\big)=0$, thus
	$$ \rho(\pi_g)=\rho\big((1-x)\pi_g\big)+ \rho\big(x\pi_g\big)=0,$$
	that is, $\psi(\pi_g)$ is in the kernel of $\omega$.
	\qed	
\end{proof}

A trace  on  $C^*_\pi(\Gamma)$ is nothing but a $\Gamma$-map $\tau:  C^*_\pi(\Gamma)\to\mathbb C$, where
$\mathbb C$ carries the trivial action of $\Gamma$ by Ad$_\pi$. We say that $\tau$ is supported on a normal subgroup $\Lambda\unlhd \Gamma$ if $\tau(\pi_g)=0$, for $g\notin \Lambda$.

\begin{proposition} \label{rad3}
	All extendable traces on $C^*_\pi(\Gamma)$ are supported on {\rm Rad}$_{\pi}(\Gamma)$.
\end{proposition}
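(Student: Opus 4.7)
The plan is to reduce the proposition directly to Lemma~\ref{rad2} by unpacking what ``extendable'' provides. Let $\tau: C^*_\pi(\Gamma)\to \mathbb{C}$ be an extendable trace, so by definition there is a $\Gamma$-map $\tilde\tau: B(H_\pi)\to \mathcal{B}_\pi$ whose restriction to $C^*_\pi(\Gamma)$ agrees with $\tau$ after composing with the unital embedding $\mathbb{C}\hookrightarrow \mathcal{B}_\pi$. In particular, for every $g\in\Gamma$,
\begin{equation*}
\tilde\tau(\pi_g)\;=\;\tau(\pi_g)\cdot 1_{\mathcal{B}_\pi}.
\end{equation*}

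Now fix $g\notin \mathrm{Rad}_\pi(\Gamma)$. Applying Lemma~\ref{rad2} to the $\Gamma$-map $\psi=\tilde\tau$, I obtain a state $\omega$ on $\mathcal{B}_\pi$ with $\omega\bigl(\tilde\tau(\pi_g)\bigr)=0$. Combining this with the scalar identity above gives
\begin{equation*}
0\;=\;\omega\bigl(\tilde\tau(\pi_g)\bigr)\;=\;\tau(\pi_g)\,\omega(1_{\mathcal{B}_\pi})\;=\;\tau(\pi_g),
\end{equation*}
since $\omega$ is unital. As $g\notin \mathrm{Rad}_\pi(\Gamma)$ was arbitrary, this shows $\tau$ is supported on $\mathrm{Rad}_\pi(\Gamma)$.

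The only nontrivial point I want to verify carefully is the scalar identity $\tilde\tau(\pi_g)=\tau(\pi_g)\cdot 1_{\mathcal{B}_\pi}$. This is really a consequence of how ``extendable'' is defined: the compatibility square forces $\tilde\tau$ on $C^*_\pi(\Gamma)$ to land in $\mathbb{C}\cdot 1_{\mathcal{B}_\pi}$. If instead one only knew that $\tilde\tau$ extends $\tau$ in a looser sense, one could still argue via $\pi$-rigidity: the u.c.p.\ map $\mathbb{C}\cdot 1\to \mathcal{B}_\pi\to \mathbb{C}\cdot 1$ is automatically the identity, so composing $\tilde\tau|_{C^*_\pi(\Gamma)}$ with any state $\omega$ that restricts to the identity on $\mathbb{C}\cdot 1_{\mathcal{B}_\pi}$ recovers $\tau$. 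Thus the main ingredient is really Lemma~\ref{rad2}, and the proposition is essentially a corollary once the definition of extendability is correctly used.
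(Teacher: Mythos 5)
Your proof is correct and follows essentially the same route as the paper: extract the $\Gamma$-map extension $\tilde\tau:B(H_\pi)\to\mathcal{B}_\pi$, note the scalar identity $\tilde\tau(\pi_g)=\tau(\pi_g)1$, and apply Lemma~\ref{rad2} to kill $\tau(\pi_g)$ for $g\notin\mathrm{Rad}_\pi(\Gamma)$. Your extra remark justifying the scalar identity via $\pi$-rigidity is a reasonable precaution but is not needed beyond the definition of extendability, which is exactly how the paper argues.
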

\begin{proof}
Let $\tau$ be an extendable  trace on  $C^*_\pi(\Gamma)$ and consider the $\Gamma$-extension $\tilde\tau: B(H_\pi) \to \mathcal{B}_{\pi}$. Then $\tilde\tau(\pi_g)=\tau(\pi_g)1$, for any $g\in \Gamma$. If $g\notin$Rad$_{\pi}(\Gamma)$, then by Lemma \ref{rad2}, $\tilde\tau(\pi_g)$ is in the kernel some state $\omega$ on $\mathcal{B}_{\pi}$. Hence, $\tau(\pi_g) = \omega(\tau(\pi_g)1)=\omega(\tilde\tau(\pi_g))=0$.\qed
\end{proof}

\begin{corollary} \label{rad4}
If $\Gamma$ acts faithfully on $\mathcal{B}_{\pi}$, then  $C^*_\pi(\Gamma)$ has either has no extendable trace or has a unique one.
\end{corollary}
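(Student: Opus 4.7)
The plan is to derive the corollary directly from Proposition \ref{rad3} by observing that the faithfulness hypothesis collapses the support of any extendable trace to the identity element.

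First I would recall, from the paragraph preceding the corollary (and from \cite[Definition 4.6]{bek} as cited in Section \ref{bdry}), that $\mathrm{Rad}_{\pi}(\Gamma)$ is precisely the kernel of the action of $\Gamma$ on $\mathcal B_{\pi}$. Hence the hypothesis that $\Gamma$ acts faithfully on $\mathcal B_{\pi}$ is equivalent to $\mathrm{Rad}_{\pi}(\Gamma)=\{e\}$.

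Next, let $\tau_1,\tau_2$ be two extendable traces on $C^*_{\pi}(\Gamma)$. By Proposition \ref{rad3}, each $\tau_i$ is supported on $\mathrm{Rad}_{\pi}(\Gamma)=\{e\}$, so $\tau_i(\pi_g)=0$ for every $g\neq e$ and $\tau_i(\pi_e)=\tau_i(1)=1$ (the normalization built into the definition of a trace as a $\Gamma$-map to $\mathbb C$ with the trivial action, as in the paragraph before Lemma \ref{rad2}). Therefore $\tau_1$ and $\tau_2$ agree on $\pi_g$ for every $g\in\Gamma$, hence on the $*$-subalgebra $\mathrm{span}\,\pi(\Gamma)$.

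Finally, since traces are (positive, unital) bounded linear functionals on the C*-algebra $C^*_{\pi}(\Gamma)$ and $\mathrm{span}\,\pi(\Gamma)$ is norm-dense in $C^*_{\pi}(\Gamma)$, a standard continuity argument gives $\tau_1=\tau_2$ on all of $C^*_{\pi}(\Gamma)$. So either there is no extendable trace or there is exactly one. There is no real obstacle here: the corollary is an immediate packaging of Proposition \ref{rad3} once one notes that a faithful action forces the $\pi$-amenable radical to be trivial; the only minor point to be careful about is the normalization convention for traces, which is fixed by viewing $\tau$ as a unital $\Gamma$-map to $\mathbb C$.
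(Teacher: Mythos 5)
Your argument is correct and follows essentially the same route as the paper's: faithfulness gives $\mathrm{Rad}_{\pi}(\Gamma)=\{e\}$, Proposition \ref{rad3} then forces any extendable trace to vanish on $\pi_g$ for $g\neq e$, and two such traces must coincide since they agree on the dense $*$-subalgebra spanned by $\pi(\Gamma)$. The paper phrases the last step via the extension to the multiplier algebra rather than via norm-density, but this is an inessential difference.
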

\begin{proof}
	If we have Rad$_{\pi}(\Gamma)=\{e\}$, then for an extendable trace $\tau$, $\tau(\pi_g)=0$, for $g\neq e$. Since an extendable trace on  $C^*_\pi(\Gamma)$ extend to a functional on $M( C^*_\pi(\Gamma))$, this means that any two traces  agree on the copy of $C^*_\pi(\Gamma)$ inside  $M(C^*_\pi(\Gamma))$.  \qed
\end{proof}

\begin{lemma}\label{can}
	The following are equivalent:
	
	$(i)$ there is a canonical trace $\tau_0$ on  $C^*_\pi(\Gamma)$ satisfying $\tau_0(\pi_t)=\delta_{t e}$, where the right hand side is the Kronecker delta,
	
	$(ii)$ for each normal subgroup $\Lambda\unlhd \Gamma$, there is  a conditional expectation  $\mathbb E_\Lambda: C^*_\pi(\Gamma)\to C^*_\pi(\Lambda)$, which is a $\Gamma$-map satisfying $\mathbb E_\Lambda(\pi_t)=1_\Lambda(t)\pi_t$, where $1_\Lambda$ is the characteristic function of $\Lambda$ considered as a function on $\Gamma$.
	
	\noindent Moreover, if $\lambda\preceq\pi$,  $\pi$ satisfies these  conditions. In this case, we also have:
	
	$(iii)$ for each amenable subgroup $\Lambda\leq \Gamma$, there is  a state on $C^*_\pi(\Gamma)$ extending the characteristic function $1_\Lambda$.
	
	\end{lemma}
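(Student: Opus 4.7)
The plan is to prove (ii)$\Rightarrow$(i) by specializing to $\Lambda=\{e\}$, (i)$\Rightarrow$(ii) by a Schur-multiplier construction built from the positive definite function $1_\Lambda$, and to handle the moreover clause by pulling the canonical trace back along the weak containment $\lambda\preceq\pi$.

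For (ii)$\Rightarrow$(i), I would take $\Lambda=\{e\}$, which is normal with $C^*_\pi(\{e\}) = \mathbb{C}\cdot 1$; then $\tau_0:=\mathbb{E}_{\{e\}}$ is already a state on $C^*_\pi(\Gamma)$ with $\tau_0(\pi_t) = \delta_{t,e}$. Traciality comes from the $\Gamma$-equivariance of $\mathbb{E}_{\{e\}}$ (with respect to $\mathrm{Ad}_\pi$ and the trivial action on $\mathbb{C}$): the identity $\mathbb{E}_{\{e\}}(\pi_g a \pi_g^{-1}) = \mathbb{E}_{\{e\}}(a)$ evaluated at $a=\pi_t$ gives $\tau_0(\pi_g\pi_t) = \tau_0(\pi_t\pi_g)$, which linearity and norm-density of $\mathbb{C}[\Gamma]$ promote to all of $C^*_\pi(\Gamma)$.

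For (i)$\Rightarrow$(ii), the candidate is the Schur multiplier attached to $1_\Lambda\colon\Gamma\to\{0,1\}$. Normality exhibits $1_\Lambda$ as the pullback along $q\colon\Gamma\to\Gamma/\Lambda$ of $\delta_e$ on $\Gamma/\Lambda$, concretely realized as the diagonal matrix coefficient $1_\Lambda(g) = \langle\sigma_\Lambda(g)\delta_\Lambda,\delta_\Lambda\rangle$ of the quasi-regular representation $\sigma_\Lambda$ on $\ell^2(\Gamma/\Lambda)$, whence $1_\Lambda$ is positive definite. The Stinespring formula
\[
\mathbb{E}_\Lambda(a) := V^*\bigl(\pi\otimes\sigma_\Lambda\bigr)(a)\,V, \qquad V\xi := \xi\otimes\delta_\Lambda,
\]
sends $\pi_t$ to $1_\Lambda(t)\pi_t$, is u.c.p., idempotent, identity on $C^*_\pi(\Lambda)$, and $\mathrm{Ad}_\pi$-equivariant by the conjugation-invariance of $1_\Lambda$ that normality supplies. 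The central task is to check that (i) upgrades this formula from $\mathbb{C}[\Gamma]$ to $C^*_\pi(\Gamma)$ with range in $C^*_\pi(\Lambda)$: the GNS of $\tau_0$ identifies with the regular representation, yielding a $*$-homomorphism $\rho\colon C^*_\pi(\Gamma)\to C^*_\lambda(\Gamma)$ fixing $\mathbb{C}[\Gamma]$, and Fell absorption $\sigma_\Lambda\otimes\lambda\cong(\dim\sigma_\Lambda)\cdot\lambda$ lets me realize $\pi\otimes\sigma_\Lambda$ within the representation theory of $\pi$ so that the Stinespring formula is norm-continuous.

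The main obstacle is precisely this boundedness step, since the formal truncation $\sum c_t\pi_t\mapsto\sum_{t\in\Lambda}c_t\pi_t$ is not obviously $C^*_\pi$-contractive. For $\Lambda$ amenable, Fell absorption combined with (i) suffices; for general normal $\Lambda$, a cleaner route is to pass to the tracial von Neumann completion $L(\Gamma)=\rho(C^*_\pi(\Gamma))''$, apply the standard $\tau_0$-preserving conditional expectation $L(\Gamma)\to L(\Lambda)$, and verify on generators that its output returns to $C^*_\pi(\Lambda)\subseteq L(\Lambda)$. Finally, for the moreover clause, $\lambda\preceq\pi$ produces a $*$-homomorphism between the corresponding $C^*$-algebras extending the identity on $\mathbb{C}[\Gamma]$; pulling the canonical trace on $C^*_\lambda(\Gamma)$ back along it delivers $\tau_0$ on $C^*_\pi(\Gamma)$, which gives (i) and hence (ii). For (iii) with $\Lambda\leq\Gamma$ amenable (not necessarily normal), I would use an invariant mean on $\Lambda$ to produce a $\Lambda$-invariant state on $C^*_\pi(\Lambda)$ assigning the value $1$ to each $\pi_t$, $t\in\Lambda$, and extend it to a state on $C^*_\pi(\Gamma)$—either by Hahn--Banach positive extension, or via the $\Gamma$-equivariant injectivity machinery available once $\pi\preceq\lambda$ is assumed—whose values on $\pi_t$ match $1_\Lambda(t)$.
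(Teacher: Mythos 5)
Your (ii)$\Rightarrow$(i) (specialize to $\Lambda=\{e\}$ and read traciality off $\Gamma$-equivariance) and your handling of the moreover clause (pull the canonical trace of $C^*_\lambda(\Gamma)$ back along the $*$-homomorphism coming from $\lambda\preceq\pi$) coincide with the paper's argument, and your observation that (i) forces $\lambda\preceq\pi$ via the GNS construction of $\tau_0$ is correct. However, the direction (i)$\Rightarrow$(ii) has a genuine gap, located exactly where you call it ``the central task,'' and neither of your two proposed fixes closes it. The compression $V^*(\pi\otimes\sigma_\Lambda)(\cdot)V$ is a u.c.p.\ map defined on $C^*_{\pi\otimes\sigma_\Lambda}(\Gamma)$; to transport it to $C^*_\pi(\Gamma)$ you need $\pi\otimes\sigma_\Lambda\preceq\pi$. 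Fell absorption delivers this only when $\sigma_\Lambda=\lambda_{\Gamma/\Lambda}\preceq\lambda$, i.e.\ when $\Lambda$ is amenable, whereas (ii) quantifies over \emph{all} normal subgroups. Your fallback through the tracial von Neumann completion fails for a different reason: since $\rho$ is the GNS representation of $\tau_0$, one has $\rho(C^*_\pi(\Gamma))''=\lambda(\Gamma)''$, so the $\tau_0$-preserving expectation lands in $L(\Lambda)=\lambda(\Lambda)''$, which contains the quotient $\rho(C^*_\pi(\Lambda))=C^*_\lambda(\Lambda)$ but not $C^*_\pi(\Lambda)$ itself. The inclusion ``$C^*_\pi(\Lambda)\subseteq L(\Lambda)$'' you invoke is false whenever $\rho$ is not injective on $C^*_\pi(\Lambda)$; for instance with $\pi=\lambda\oplus 1_\Gamma$ for non-amenable $\Gamma$ and $\Lambda$ a non-amenable normal subgroup, the trivial representation of $\Lambda$ is weakly contained in $\pi|_\Lambda$ but not in $\lambda|_\Lambda$, so $C^*_\pi(\Lambda)\to C^*_\lambda(\Lambda)$ has kernel and there is no way back. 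The paper avoids this by never leaving $B(H_\pi)$: it extends $\tau_0$ to a normal trace on $M=\pi(\Gamma)''$ and restricts the $\tau$-preserving expectation $M\to\pi(\Lambda)''$ to $C^*_\pi(\Gamma)$, so the range automatically sits inside $C^*_\pi(\Lambda)$.

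There is a second, smaller gap in (iii). A Hahn--Banach extension of a $\Lambda$-invariant state on $C^*_\pi(\Lambda)$ taking the value $1$ on $\pi_t$, $t\in\Lambda$, only forces the extension to be invariant under left and right translation by $\Lambda$ (via the multiplicative-domain argument); it does not force the value $0$ on $\pi_s$ for $s\notin\Lambda$. For example, if $1_\Gamma\preceq\pi$ the constant character is an admissible extension and does not extend $1_\Lambda$ as a function on $\Gamma$. The paper instead takes Kennedy's state on $C^*_\lambda(\Gamma)$ extending $1_\Lambda$ (built from the vector state of the quasi-regular representation $\lambda_{\Gamma/\Lambda}\preceq\lambda$) and pulls it back along $\phi:C^*_\pi(\Gamma)\to C^*_\lambda(\Gamma)$, which pins the values to $1_\Lambda(t)$ exactly; replacing your Hahn--Banach step with this pullback repairs (iii).
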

 \begin{proof}
  $(i)\Rightarrow (ii)$. Extend the trace on  $C^*_\pi(\Gamma)$ to a normal trace $\tau$ on $M:=\pi(\Gamma)^{''}$. Then by standard results of von Neumann algebras (c.f, \cite[Proposition 11.21]{ps}), for the von Neumann subalgebra $N:=\pi(\Lambda)^{''}$, there is a conditional expectation: $M\to N$. Here we need the specific properties of this expectation, so we recall the proof from \cite{pi}. By definition $L^2(N,\tau|_N)$ can be naturally identified to a subspace of
$ L^2(M,\tau)$, namely the closure $\bar N$ of $N$ in $L^2(M,\tau)$. Let $P$ be the orthogonal
 projection from $L^2(M,\tau)$ to $\bar N \subseteq  L^2(N,\tau|_N)$. For $x \in L^2(M,\tau)$, $P(x)$ is the unique $\tilde x \in L^2(N,\tau|_N)$ with $\langle x,y\rangle = \langle \tilde x,y\rangle$ for each $y\in N$. By \cite[Remark 11.15]{pi}, $P$ is an $N$-module map, which is clearly acts as identity on $N$, in particular, it is a norm-one projection, and for $ a,b\in N$, the form $x\mapsto  \langle a,P(x)b\rangle = \tau(a^*xb)$ is normal, and so is $P$. The complete positivity of $P$ is now automatic by  \cite[Theorem 1.45]{pi}. Let  $\mathbb E_\Lambda$ be the restriction of $P$ to $C^*_\pi(\Gamma)$. Since the image of norm-closed subspaces under conditional expectations are norm-closed and $P$ acts as identity on $C^*_\pi(\Lambda)$, $\mathbb E_\Lambda$ maps $ C^*_\pi(\Gamma)$ onto a norm closed subspace containing $C^*_\pi(\Lambda)$. 
 
 Next, given $t\in\Lambda$ and $s\in\Gamma\backslash\Lambda$,
 $$\langle\pi_t,\pi_s\rangle=\tau(\pi^*_t\pi_s)=\tau_0(\pi_{t^{-1}s})=0,$$
 since $t^{-1}s\neq e$. Therefore, $\pi_s$ is perpendicular to the linear span of $\pi(\Lambda)$ in $L^2(M,\tau)$, which is norm dense in $L^2(N,\tau|_N)$. In particular, $P\pi_s=0$, for $s\in\Gamma\backslash\Lambda$. On the other hand,  
 $$\mathbb E_\Lambda(\pi_t)=P\pi_tP\ \ (t\in\Gamma),$$
 where $P$ is viewed as an orthogonal
 projection from $L^2(M,\tau)$ onto $L^2(N,\tau|_N)$. Thus 
  $$\mathbb E_\Lambda(\sum_{t\in\Gamma}a_t\pi_t)=\sum_{t\in\Lambda}a_t\pi_t,$$
  where the scalars $a_t$ are zero except for finitely many indices. 
 This implies that $\mathbb E_\Lambda$ maps $ C^*_\pi(\Gamma)$ onto  $C^*_\pi(\Lambda)$. 
 
 $(ii)\Rightarrow (i)$.
 Applying the assumption to the trivial subgroup, we get the canonical trace satisfying  
 $$\tau_0(\sum_{t\in\Gamma} a_t\pi_t):=a_e,$$
 where the scalars $a_t$ are zero except for finitely many indices.
 
 Next if  $\lambda\preceq\pi$ then there is a $\Gamma$-map $\phi: C^*_\pi(\Gamma)\to C^*_\lambda(\Gamma)$. This map composed with the canonical trace $x\mapsto \langle x\delta_e,\delta_e\rangle$ on $C^*_\lambda(\Gamma)$ is a trace $\tau_0$ on $C^*_\pi(\Gamma)$ which  satisfies condition $(i)$ above:  
 $$\tau_0(\pi_t)=\langle\phi(\pi_t)\delta_e,\delta_e\rangle=\langle\lambda_t\delta_e,\delta_e\rangle=\langle\delta_t,\delta_e\rangle=0,$$
 when $t\neq e$.
 
Finally, for an amenable subgroup $\Lambda\leq \Gamma$, there is a state $\rho$ on $C^*_\lambda(\Gamma)$ extending the characteristic function $1_\Lambda$ (see the proof of \cite[Proposition 3.2]{ke}). Since the above $\Gamma$-map $\phi: C^*_\pi(\Gamma)\to C^*_\lambda(\Gamma)$ satisfies $\phi(\pi_s)=\lambda_s$, for $s\in\Gamma$, $\rho\circ\phi$ is the desired state on $C^*_\pi(\Gamma)$.  \qed
\end{proof}

\vspace{.3cm}
In particular, the above equivalent conditions hold for the case of left regular representation.

\begin{theorem} \label{main}
	If   $\pi\preceq\lambda$ and there is a trace on  $C^*_\pi(\Gamma)$, the following are equivalent:
	
	$(i)$\ $\Gamma$ acts faithfully on $\mathcal{B}_{\pi}$, 
	
	$(ii)$\ $C^*_\pi(\Gamma)$ has unique trace property.
\end{theorem}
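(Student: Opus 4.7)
The plan has two parts.

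For $(i)\Rightarrow(ii)$, the argument follows directly from Corollary \ref{rad4}. Under $\pi\preceq\lambda$ every trace on $C^*_\pi(\Gamma)$ is extendable (as recorded just before Lemma \ref{rad2}), and faithfulness of the $\Gamma$-action on $\mathcal{B}_\pi$ is by definition $\mathrm{Rad}_\pi(\Gamma)=\{e\}$. Thus Corollary \ref{rad4} implies there is at most one extendable trace, and the hypothesized existence promotes this to uniqueness.

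For $(ii)\Rightarrow(i)$, I would argue the contrapositive: assume $N:=\mathrm{Rad}_\pi(\Gamma)\neq\{e\}$ and construct a trace distinct from the canonical $\tau_0$, which is supplied by Lemma \ref{can} under $\pi\preceq\lambda$ and satisfies $\tau_0(\pi_g)=\delta_{g,e}$. The first reduction is the standard observation that an $\mathrm{Ad}_\pi$-invariant state on $C^*_\pi(\Gamma)$ is automatically tracial, since for $s,t\in\Gamma$ the elements $st$ and $ts=s^{-1}(st)s$ are conjugate. Combining this with Proposition \ref{rad3} and the $\Gamma$-equivariant conditional expectation $\mathbb{E}_N:C^*_\pi(\Gamma)\to C^*_\pi(N)$ from Lemma \ref{can}(ii), the assignments $\omega\mapsto\omega\circ\mathbb{E}_N$ and $\tau\mapsto\tau|_{C^*_\pi(N)}$ are mutually inverse bijections between $\Gamma$-invariant states on $C^*_\pi(N)$ and traces on $C^*_\pi(\Gamma)$ (the injectivity of the restriction uses Proposition \ref{rad3} to see that a trace vanishes outside $N$). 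The problem thus reduces to producing a second $\Gamma$-invariant state on $C^*_\pi(N)$.

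Fix $g_0\in N\setminus\{e\}$ and apply Lemma \ref{can}(iii) to the amenable cyclic subgroup $\langle g_0\rangle$, obtaining a state $\rho$ on $C^*_\pi(\Gamma)$ with $\rho(\pi_{g_0})=1\neq 0=\tau_0(\pi_{g_0})$. The strategy is to symmetrize $\rho$ into a $\Gamma$-invariant state on $C^*_\pi(N)$ while preserving non-vanishing at $\pi_{g_0}$. The main tool is $\pi$-amenability of $N$: by definition there is a $\Gamma$-map $\Phi:B(H_\pi)\to\pi(N)'$, and since $\pi(N)$ commutes with $\pi(N)'$, the $\Gamma$-action on $\pi(N)'$ factors through $\Gamma/N$; moreover $\mathcal{B}_\pi\subseteq\pi(N)'$ because $N$ acts trivially on $\mathcal{B}_\pi$. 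The main obstacle is completing the averaging without assuming amenability of $\Gamma/N$: my plan is to use the explicit $\Gamma$-equivariance of $\Phi$ together with the fact that the obstruction in Lemma \ref{rad2} vanishes precisely at elements of $N$, so the averaged state retains a nonzero value at $\pi_{g_0}$. The resulting $\Gamma$-invariant state on $C^*_\pi(N)$ then yields, via the bijection above, a second trace on $C^*_\pi(\Gamma)$ distinct from $\tau_0$, contradicting uniqueness.
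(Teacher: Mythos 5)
Your first implication and your reduction of $(ii)\Rightarrow(i)$ to producing a second $\mathrm{Ad}_\pi$-invariant state on $C^*_\pi(N)$, $N=\mathrm{Rad}_\pi(\Gamma)$, are sound and consistent with the paper (which phrases the same reduction as composing a trace on $C^*_\pi(\Lambda)$ with the conditional expectation $\mathbb E_\Lambda$ of Lemma \ref{can}). The genuine gap is in your final step: you start from a non-invariant state $\rho$ extending $1_{\langle g_0\rangle}$ and propose to ``symmetrize'' it into a $\Gamma$-invariant state on $C^*_\pi(N)$ that remains nonzero at $\pi_{g_0}$. You yourself flag that $\Gamma/N$ need not be amenable, and that is exactly where the plan breaks: there is no invariant mean over which to average, and along any net of finite averages $\frac{1}{n}\sum_i s_i\cdot\rho$ a weak$^*$ cluster point can perfectly well vanish at $\pi_{g_0}$. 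The appeal to ``the explicit $\Gamma$-equivariance of $\Phi$'' and to ``the obstruction in Lemma \ref{rad2} vanishing on $N$'' is not an argument; Lemma \ref{rad2} only says that $\psi(\pi_g)$ is annihilated by \emph{some} state when $g\notin N$, and it gives no lower bound on the value of an averaged state at $\pi_{g_0}\in N$.

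The missing idea, which is how the paper closes this implication, is that the invariant state comes for free from the $\pi$-amenability of $N$ and no averaging is needed. Since $N$ acts trivially on $\mathcal B_\pi$, it is $\pi$-amenable, so there is a $\Gamma$-map $\phi:B(H_\pi)\to\pi(N)'$; composing with any state of $\pi(N)'$ yields an $\mathrm{Ad}_{\pi}(N)$-invariant state on $B(H_\pi)$, whence (as the paper argues) the trivial representation of $N$ is weakly contained in $\pi|_N$. Hence the \emph{unit character} $\pi_t\mapsto 1$ ($t\in N$) defines a state $\tau_1$ on $C^*_\pi(N)$; it is automatically $\Gamma$-invariant (being identically $1$ on the unitaries $\pi_t$, $t\in N$, with $N$ normal) and satisfies $\tau_1(\pi_{g_0})=1\neq 0$, so $\tau_1\circ\mathbb E_N$ is the desired second trace. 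Your state $\rho$ extending $1_{\langle g_0\rangle}$ via Lemma \ref{can}$(iii)$ is a red herring: it only exploits amenability of the cyclic subgroup, not the $\pi$-amenability of $N$, which is the hypothesis that actually has to be used. (A separate issue you inherit from the paper: Lemma \ref{can} supplies $\tau_0$, $\mathbb E_\Lambda$ and part $(iii)$ under $\lambda\preceq\pi$, not under the theorem's hypothesis $\pi\preceq\lambda$, so their availability here would need justification in any case.)
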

\begin{proof}
	 $(i)\Rightarrow (ii)$. Since $\pi\preceq\lambda$, all traces are extendable and this implication follow from Corollary \ref{rad4}.
	 
	 $(ii)\Rightarrow (i)$.  Let $\Lambda\unlhd \Gamma$ be a  $\pi$-amenable normal subgroup and $\phi: B(H_\pi)\to \pi(\Lambda)^{'}$ be the corresponding $\Gamma$-map. Since $\Lambda$ acts trivially on $\pi(\Lambda)^{'}$, the composition of $\phi$ with any state $\tau$ on $\pi(\Lambda)^{'}$ gives a $\Lambda$-map $\tau\circ\phi: B(H_\pi)\to \mathbb C$, which means that the trivial representation is weakly contained in $\pi|_{\Lambda}$, in particular, the unit character on $\Lambda$ that sends each $\pi_t$ to 1, for $t\in\Lambda$ extends
	 to a non-canonical trace $\tau_1$ on $C^*_\pi(\Lambda)$, unless $\Lambda$ is the trivial subgroup.  This composed with the conditional expectation $\mathbb E_\Lambda$ of Lemma \ref{can} gives a trace $\tau_1\circ \mathbb E_\Lambda$ on $C^*_\pi(\Gamma)$, which is different with the canonical trace $\tau_0$ of Lemma \ref{can}, which is a contradiction, forcing $\Lambda$ to be trivial. Therefore, Rad$_{\pi}(\Gamma)$ is trivial.  \qed
\end{proof}

Note that in the particular case where $\mathcal{B}_{\pi}\cong C(\partial_F\Gamma)$, canonically as $\Gamma$-spaces,  the assumption Rad$_{\pi,u}(\Gamma)=\{e\}$ follows from Rad$(\Gamma)=\{e\}$, since the latter means that $\Gamma$ acts faithfully on $\partial_F\Gamma$.

In order to study traces on $C^*_\pi(\Gamma)$, following \cite{ke}, we study $\Gamma$-boundaries inside the state space $\mathcal{S}(C^*_\pi(\Gamma))$. Recall that a $\Gamma$-space $X$ is a $\Gamma$-{\it boundary} if for each probability measure $\nu$ on $X$ and point $x\in X$, the Dirac measure $\delta_x$ is in the weak$^*$-closure of $\Gamma\nu$. A convex $\Gamma$-space $K$ is {\it affine} if the restriction of $\Gamma$-action to $K$ is implemented by convex maps, and is {\it minimal} if it is minimal among all affine $\Gamma$-spaces. By Zorn lemma, inside each affine $\Gamma$-space one has a unique minimal one. It is also a standard fact that for a minimal affine $\Gamma$-space $K$, the closure of the set ext$(K)$ is  a $\Gamma$-boundary \cite[III.2.3]{gl2}. 

Given a trace $\tau\in \mathcal{S}(C^*_\pi(\Gamma))$, since $\Gamma$ acts on $C^*_\pi(\Gamma)$ by $s\cdot a:=\pi_sa\pi_s^*$, the singleton $\{\tau\}$ is a $\Gamma$-boundary in $\mathcal{S}(C^*_\pi(\Gamma))$. Unlike the case of regular representation, $C^*_\pi(\Gamma)$ may fail to have a canonical trace (or indeed any trace at all), however, when $\lambda\preceq\pi$, by Lemma \ref{can}, we have a canonical trace $\tau_0$ on $C^*_\pi(\Gamma)$. In this case, we say that a $\Gamma$-boundary $X$ is {\it trivial} if $X=\{\tau_0\}.$ In particular, if the trace on $C^*_\pi(\Gamma)$ is not unique, then for any other trace $\tau$, the singleton $\{\tau\}$ is a non-trivial $\Gamma$-boundary, for the action via ad$_\pi$. 

we say that a $\Gamma$-boundary $X\subseteq \mathcal S(C^*_\pi(\Gamma))$ is a $\pi$-boundary if there is a weak$^*$-continuous  $\Gamma$-map: $\mathcal P(C^*_\pi(\Gamma))\to X$. Note that $\Gamma$-boundaries are automatically $\lambda$-boundary.  

Though in the proof of the next result we use a result proved in Section \ref{sim} (whose proof is independent of the results of this section), we prefer to keep it here. Recall that a boundary map is a u.c.p. $\Gamma$-map into $\mathcal B_\pi$.  

\begin{proposition} \label{1-1 cor}
	If $\pi\preceq\lambda$, then there is a one-one correspondence between boundary maps: $C^*_\pi(\Gamma)\to \mathcal{B}_{\pi}$ and $\pi$-boundaries inside $\mathcal S(C^*_\pi(\Gamma))$.
\end{proposition}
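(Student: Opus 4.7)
The plan is to construct mutually inverse assignments $\phi\mapsto X_\phi$ and $X\mapsto \phi_X$ between the two sets. The forward map sends a boundary map $\phi:C^*_\pi(\Gamma)\to\mathcal{B}_\pi$ to a $\pi$-boundary obtained by pulling back a canonical $\Gamma$-boundary inside $\mathcal{S}(\mathcal{B}_\pi)$; the reverse map sends a $\pi$-boundary $X$ to a boundary map constructed by first evaluating at $X$ and then routing to $\mathcal{B}_\pi$ through the Furstenberg boundary, which is available precisely because $\pi\preceq\lambda$.

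For the forward direction, starting with $\phi:C^*_\pi(\Gamma)\to\mathcal{B}_\pi$, I would form the dual $\phi^*:\mathcal{S}(\mathcal{B}_\pi)\to\mathcal{S}(C^*_\pi(\Gamma))$, which is weak$^*$-continuous, affine and $\Gamma$-equivariant. Inside the compact convex $\Gamma$-space $\mathcal{S}(\mathcal{B}_\pi)$ I would extract by Zorn's lemma a minimal closed affine $\Gamma$-invariant set $K$ and set $Y:=\overline{\mathrm{ext}(K)}$; by the Glasner fact recalled in the text, $Y$ is a $\Gamma$-boundary. Then $X_\phi:=\phi^*(Y)\subseteq \mathcal{S}(C^*_\pi(\Gamma))$ is a $\Gamma$-boundary, being the continuous affine $\Gamma$-equivariant image of one. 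The $\pi$-boundary property amounts to producing a weak$^*$-continuous $\Gamma$-map $\mathcal{P}(C^*_\pi(\Gamma))\to X_\phi$, which I would obtain by composing $\phi^*$ with the canonical weak$^*$-continuous $\Gamma$-map $\mathcal{P}(C^*_\pi(\Gamma))\to Y$ that exists because $Y$ is itself a $\Gamma$-boundary and, by the remark preceding the proposition, $\Gamma$-boundaries are automatically $\lambda$-boundaries (hence $\pi$-boundaries under $\pi\preceq\lambda$).

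For the reverse direction, given a $\pi$-boundary $X\subseteq\mathcal{S}(C^*_\pi(\Gamma))$ with witnessing $\Gamma$-map $\psi:\mathcal{P}(C^*_\pi(\Gamma))\to X$, I would first define $\Phi_X:C^*_\pi(\Gamma)\to C(X)$ by $\Phi_X(a)(\sigma):=\sigma(a)$, which is patently a $\Gamma$-equivariant u.c.p.\ map since $\Gamma$ acts on $C^*_\pi(\Gamma)$ by $\mathrm{Ad}_\pi$ and on $X$ by the restricted dual action. To finish I need a $\Gamma$-equivariant u.c.p.\ map $C(X)\to\mathcal{B}_\pi$, and here I would invoke the result of Section~\ref{sim} (whose proof is independent of this section): the $\Gamma$-boundary $X$ is a $\Gamma$-factor of $\partial_F\Gamma$, so $C(X)$ $\Gamma$-embeds into $C(\partial_F\Gamma)=\mathcal{B}_\lambda$, and the assumption $\pi\preceq\lambda$ provides a $\Gamma$-map $\mathcal{B}_\lambda\to\mathcal{B}_\pi$. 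Composing these with $\Phi_X$ gives the boundary map $\phi_X$.

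For bijectivity I would chase the definitions and lean on the $\Gamma$-rigidity property~(iv) of $\mathcal{B}_\pi$: the two $\Gamma$-maps $\phi$ and $\phi_{X_\phi}$ into $\mathcal{B}_\pi$ agree after composition with the retraction $\phi_0$ onto $\mathcal{B}_\pi$, forcing $\phi=\phi_{X_\phi}$; in the other direction, minimality of $\Gamma$-boundaries identifies $X$ with $X_{\phi_X}$ through the $\Gamma$-equivariant factoring established in the construction. The main obstacle is the reverse direction — specifically, producing the $\Gamma$-equivariant u.c.p.\ map $C(X)\to\mathcal{B}_\pi$ — which genuinely uses both $\pi\preceq\lambda$ (to pass through $\mathcal{B}_\lambda=C(\partial_F\Gamma)$) and the Section~\ref{sim} identification relating $\Gamma$-boundaries to boundary maps; verifying that this construction is canonical enough to invert the forward map is the other subtle point.
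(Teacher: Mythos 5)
Your forward construction coincides with the paper's: both dualize $\phi$ to $\phi^{*}:\mathcal S(\mathcal B_{\pi})\to\mathcal S(C^{*}_{\pi}(\Gamma))$, extract a minimal affine $\Gamma$-space, pass to the closure of its extreme points, and obtain the witnessing map $\mathcal P(C^{*}_{\pi}(\Gamma))\to X_{\phi}$ by factoring through $\partial_F\Gamma$ via $\pi\preceq\lambda$. Your reverse construction is genuinely different: the paper dualizes the witnessing map $\varphi$ directly, setting $\langle\phi(a),\omega\rangle:=\langle a,\varphi(\omega)\rangle$ and reading the resulting function as an element of the (commutative, since $\pi\preceq\lambda$) C*-algebra $\mathcal B_{\pi}$, whereas you discard $\varphi$ entirely and route $C^{*}_{\pi}(\Gamma)\to C(X)\hookrightarrow C(\partial_F\Gamma)\to\mathcal B_{\pi}$. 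Your route is essentially Kennedy's original formulation and is legitimate as a construction; its advantage is that it does not need to identify functions on $\mathcal P(\mathcal B_{\pi})$ with elements of $\mathcal B_{\pi}$ by hand, but its cost is that the resulting boundary map depends only on $X$ as a subset, so you owe an argument that it recovers the $\phi$ you started from.

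That is where the genuine gap lies. The $\Gamma$-rigidity of $\mathcal B_{\pi}$ (property $(iv)$ of Section 3) says only that the identity is the unique $\Gamma$-map $\mathcal B_{\pi}\to\mathcal B_{\pi}$; it says nothing about $\Gamma$-maps defined on $C^{*}_{\pi}(\Gamma)$. Your step ``$\phi$ and $\phi_{X_{\phi}}$ agree after composition with the retraction $\phi_{0}$, forcing $\phi=\phi_{X_{\phi}}$'' is circular: both maps already land in $\mathcal B_{\pi}$ and $\phi_{0}$ restricts to the identity there, so composing with $\phi_{0}$ changes nothing; and if rigidity really forced any two $\Gamma$-maps $C^{*}_{\pi}(\Gamma)\to\mathcal B_{\pi}$ to coincide, the proposition would be a bijection between singletons, which is not its intent. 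The tool you actually need is the uniqueness of $\Gamma$-maps between $\Gamma$-boundaries (a consequence of minimality and strong proximality, as in Glasner, and the reason Kennedy's $b_X:\partial_F\Gamma\to X$ is canonical): the composite $\mathcal P(\mathcal B_{\pi})\to\partial_F\Gamma\to X_{\phi}$ arising in your reverse construction and the restriction of $\phi^{*}$ to $\mathcal P(\mathcal B_{\pi})$ are two $\Gamma$-maps into the boundary $X_{\phi}$, hence equal, and this is what identifies $\phi_{X_{\phi}}$ with $\phi$. In fairness, the paper's own proof also stops short of verifying that the two assignments are mutually inverse, but your proposal affirmatively cites the wrong mechanism for that step.
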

\begin{proof}
	Given a u.c.p. $\Gamma$-map $\phi: C^*_\pi(\Gamma)\to \mathcal{B}_{\pi}$, for the induced $\Gamma$-equivariant map $\phi: \mathcal S(\mathcal{B}_{\pi})\to \mathcal S(C^*_\pi(\Gamma))$, $K:=\phi^*(\mathcal P(\mathcal{B}_{\pi}))$ is a minimal affine $\Gamma$-space by \cite[III.2.4]{gl2}, and so $X:=$ext$(K)$ is a $\Gamma$-boundary, by the first statement of Proposition \ref{mp} and \cite[III.2.3]{gl2}. Since $\pi\preceq\lambda$, there is a $\Gamma$-map: $C^*_\lambda(\Gamma)\hookrightarrow C^*_\pi(\Gamma)$ whose adjoint map restricts to a  $\Gamma$-map: $\mathcal P(C^*_\pi(\Gamma))\to\mathcal P(C^*_\lambda(\Gamma))=\partial_F\Gamma$. By universality, there is a surjective $\Gamma$-map: $\partial_F\Gamma\twoheadrightarrow X$. Combining these maps, we get a $\Gamma$-map: $\mathcal P(C^*_\pi(\Gamma))\to X$. 
	
	Conversely, if $X\subseteq \mathcal S(\mathcal{B}_{\pi})$ is a $\pi$-boundary with a weak$^*$-continuous  $\Gamma$-map $\varphi: \mathcal P(C^*_\pi(\Gamma))\to X$, then we define a $\Gamma$-map $\phi: C^*_\pi(\Gamma)\to \mathcal{B}_{\pi}$ via, $$\langle\phi(a),\omega\rangle:=\langle a,\varphi(\omega)\rangle,\ \ (a\in C^*_\pi(\Gamma), \omega\in \mathcal P(\mathcal{B}_{\pi})),$$
	as required. Note that $\phi(a)$ extends affinely and continuously to a map on the state space and then to a weak$^*$-continuous map on the dual space of $\mathcal{B}_{\pi}$,  thereby defining an element of $\mathcal{B}_{\pi}$.    \qed	
\end{proof}	

In the next corollary, we do not assume that $C^*_\pi(\Gamma)$ has a trace (though this is the case in many concrete examples).
 
\begin{corollary}\label{conv}
	Assume that $\pi\preceq\lambda$. If there is a unique boundary map: $C^*_\pi(\Gamma)\to \mathcal{B}_{\pi}$, then for each $\omega\in\mathcal S(C^*_\pi(\Gamma))$, the closed convex hull of $\Gamma \omega$ contains all the traces on $C^*_\pi(\Gamma)$, namely, $\mathcal T(C^*_\pi(\Gamma))\subseteq \overline{\rm conv}(\Gamma \omega)$.  	
\end{corollary}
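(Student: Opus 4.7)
The plan is to apply Proposition \ref{1-1 cor} twice and use uniqueness to match the resulting $\pi$-boundaries. Fix $\omega\in\mathcal S(C^*_\pi(\Gamma))$ and let $\tau\in\mathcal T(C^*_\pi(\Gamma))$; set $L:=\overline{\rm conv}(\Gamma\omega)$, so that the goal is to show $\tau\in L$. Since $L$ is a compact convex $\Gamma$-invariant subset of the state space, Zorn's lemma yields a minimal nonempty closed convex $\Gamma$-invariant subset $L_0\subseteq L$. Then $X_0:=\overline{{\rm ext}(L_0)}$ is a $\Gamma$-boundary by \cite[III.2.3]{gl2}, and, exactly as in the proof of Proposition \ref{1-1 cor}, the hypothesis $\pi\preceq\lambda$ together with the universality of $\partial_F\Gamma$ furnishes a weak$^*$-continuous $\Gamma$-map $\mathcal P(C^*_\pi(\Gamma))\to X_0$. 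Hence $X_0$ is a $\pi$-boundary, and Proposition \ref{1-1 cor} attaches to it a boundary map $\phi_0:C^*_\pi(\Gamma)\to\mathcal B_\pi$.

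On the other hand, a trace on $C^*_\pi(\Gamma)$ is precisely a $\Gamma$-fixed point of $\mathcal S(C^*_\pi(\Gamma))$ under ${\rm Ad}_\pi$, so the singleton $\{\tau\}$ is trivially a $\Gamma$-boundary, and the constant map $\mathcal P(C^*_\pi(\Gamma))\to\{\tau\}$ is a weak$^*$-continuous $\Gamma$-map. Thus $\{\tau\}$ is a $\pi$-boundary, and unwinding the converse direction of Proposition \ref{1-1 cor} identifies the associated boundary map as $\psi_\tau(a):=\tau(a)\,1_{\mathcal B_\pi}$. The uniqueness hypothesis now forces $\phi_0=\psi_\tau$. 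Applying the forward direction of Proposition \ref{1-1 cor} to $\psi_\tau$ shows that $\psi_\tau^*(\mathcal S(\mathcal B_\pi))=\{\tau\}$, so the $\pi$-boundary associated with $\phi_0$ is exactly $\{\tau\}$; identifying this with $X_0$ yields $\overline{{\rm ext}(L_0)}=\{\tau\}$, and Krein--Milman gives $L_0=\{\tau\}$. Consequently $\tau\in L_0\subseteq L$, as required.

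The main technical obstacle is to make precise the statement that the two arrows of Proposition \ref{1-1 cor} are genuinely inverse at the level of $\pi$-boundaries, so that equality of boundary maps ($\phi_0=\psi_\tau$) forces equality of the associated $\pi$-boundaries ($X_0=\{\tau\}$). This amounts to unwinding the dual pairing $\langle\phi(a),\omega\rangle=\langle a,\varphi(\omega)\rangle$ used there and checking that the composition $\phi\mapsto X\mapsto\phi$ returns the original boundary map, together with the small verification that the $\Gamma$-map $\mathcal P(C^*_\pi(\Gamma))\to X_0$ supplied by universality of $\partial_F\Gamma$ is weak$^*$-continuous.
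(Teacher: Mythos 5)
Your proof is correct and follows essentially the same route as the paper's: both arguments realize $\{\tau\}$ and $\overline{{\rm ext}}$ of a minimal affine $\Gamma$-subspace of $\overline{\rm conv}(\Gamma\omega)$ as $\pi$-boundaries and then invoke the one-one correspondence of Proposition \ref{1-1 cor} (together with uniqueness of the boundary map) to identify them. The only difference is presentational — you unwind the correspondence explicitly through the two boundary maps $\phi_0$ and $\psi_\tau$, while the paper simply cites the proposition to conclude that $\{\tau\}$ is the unique $\pi$-boundary.
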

\begin{proof}
Given $\tau\in \mathcal T(C^*_\pi(\Gamma))$, then $\{\tau\}$ is a $\pi$-boundary and this is the only $\pi$-boundary by Proposition \ref{1-1 cor}.  Given $\omega\in\mathcal S(C^*_\pi(\Gamma))$, $\overline{\rm conv}(\Gamma \omega)$ contains a minimal affine $\Gamma$-space $K$ and $\overline{{\rm ext}(K)}$ is a $\Gamma$-boundary. Now, as in the proof of Proposition \ref{1-1 cor}, the assumption $\pi\preceq\lambda$ implies that $\overline{{\rm ext}(K)}$ is also a $\pi$-boundary. In particular, 
$$\{\tau\}=\overline{{\rm ext}(K)}\subseteq K\subseteq \overline{\rm conv}(\Gamma \omega),$$
i.e., $\tau\in \overline{\rm conv}(\Gamma \omega)$, as claimed.\qed
\end{proof}

The above result motivates the extension of the celebrated {\it Powers averaging property}  for the left regular representation  \cite{po} to any representation $\pi$. 

\begin{definition}
We say that a representation $\pi\in$Rep$(\Gamma)$ has Powers averaging property if for each $a\in C^*_\pi(\Gamma)$ and $\varepsilon>0$, there is $n\geq 1$ and there are elements $s_1,\cdots, s_n\in\Gamma$ such that,
$$\Big\|\frac{1}{n}\sum_{1}^{n} \pi_{s_i}a\pi_{s_i}^*-\tau(a)1\Big\|<\varepsilon,$$
for any  trace $\tau\in \mathcal T(C^*_\pi(\Gamma))$.   	
\end{definition}

Again we do not assume that $C^*_\pi(\Gamma)$ has a trace (if not, the above property is assumed to hold as a vacuous truth). When $\lambda$ has Powers averaging property, we say that $\Gamma$ has Powers averaging property \cite{po}. 

\begin{theorem}\label{pap}
If $\pi$ has Powers averaging property then $C^*_\pi(\Gamma)$ has unique trace property. If moreover $C^*_\pi(\Gamma)$ has at least one faithful trace, then $C^*_\pi(\Gamma)$ is also simple (i.e., $\Gamma$ is $C^*_\pi$-simple). Conversely, if $\pi\preceq\lambda$, $C^*_\pi(\Gamma)$ has at least one  trace, and there is a unique boundary map: $C^*_\pi(\Gamma)\to \mathcal{B}_{\pi}$,  then $\pi$ has Powers averaging property.
\end{theorem}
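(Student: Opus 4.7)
\emph{Plan.} The two forward directions follow Powers-style arguments. For PAP $\Rightarrow$ unique trace property, given traces $\tau_1,\tau_2$ I would apply the averaging inequality to $a$ with respect to $\tau_2$ and evaluate both sides with $\tau_1$: traciality of $\tau_1$ yields $\tau_1(\pi_{s_i}a\pi_{s_i}^*)=\tau_1(a)$, hence $|\tau_1(a)-\tau_2(a)|<\varepsilon$ for every $\varepsilon>0$, so $\tau_1=\tau_2$. For simplicity under a faithful trace $\tau_0$, given a nonzero closed two-sided ideal $I$ I would pick $0\neq a \in I_+$; then $\tau_0(a)>0$, and PAP with $\varepsilon=\tau_0(a)/2$ produces an averaged element $b=\frac{1}{n}\sum\pi_{s_i}a\pi_{s_i}^*\in I$ with $\|b-\tau_0(a)1\|<\tau_0(a)/2$, so $b$ is invertible and $1\in I$, i.e.\ $I=C^*_\pi(\Gamma)$.

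\emph{Converse: setup.} Each trace $\tau$ is a $\Gamma$-map to $\mathbb C\hookrightarrow\mathcal B_\pi$ and hence a boundary map, so uniqueness of the boundary map together with the existence hypothesis pins down a unique trace $\tau_0$. Fix $a=a^*\in C^*_\pi(\Gamma)$ and $\varepsilon>0$ (the general case reduces to the self-adjoint one by writing $a=a_1+ia_2$ and performing two successive averagings, using that averaging operators are unital contractions commuting with scalars), and set $K_a:=\overline{\mathrm{conv}}^{\|\cdot\|}\{\pi_s a\pi_s^*:s\in\Gamma\}$; the aim is $\tau_0(a)1\in K_a$. Suppose not. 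Hahn--Banach in the real Banach space $C^*_\pi(\Gamma)_{sa}$ produces a self-adjoint bounded linear functional $\phi$ on $C^*_\pi(\Gamma)$ and $\alpha\in\mathbb R$ with $\phi(b)>\alpha>\phi(\tau_0(a)1)=\tau_0(a)\phi(1)$ for every $b\in K_a$. Jordan-decomposing $\phi=c_+\omega_+-c_-\omega_-$ with $c_\pm\geq 0$ and $\omega_\pm$ states (degenerate cases where $c_+=0$ or $c_-=0$ are immediately handled by Corollary \ref{conv} applied to the surviving state), the separation becomes
$$c_+\omega_+(b)-c_-\omega_-(b)>\alpha>(c_+-c_-)\tau_0(a)\qquad(b\in K_a).$$

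\emph{Two-state strengthening of Corollary \ref{conv}.} The central step is to show that $(\tau_0,\tau_0)$ lies in $\overline{\mathrm{conv}}^{w^*}(\Gamma\cdot(\omega_+,\omega_-))$ in the product $\mathcal S(C^*_\pi(\Gamma))^2$ with diagonal $\Gamma$-action. Using Zorn I would pick a minimal affine $\Gamma$-subspace $K_2\subseteq\overline{\mathrm{conv}}(\Gamma\cdot(\omega_+,\omega_-))$. Each coordinate projection $\mathrm{pr}_i(K_2)\subseteq\overline{\mathrm{conv}}(\Gamma\omega_i)$ is closed, convex and $\Gamma$-invariant, and a short pullback argument shows it is again affine-minimal: any proper closed convex $\Gamma$-invariant $K'\subsetneq\mathrm{pr}_i(K_2)$ pulls back via the $\Gamma$-equivariant affine map $\mathrm{pr}_i$ to $\mathrm{pr}_i^{-1}(K')\cap K_2$, which is a nonempty proper closed convex $\Gamma$-invariant subset of $K_2$, contradicting minimality. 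Since the proof of Corollary \ref{conv} identifies every affine-minimal $\Gamma$-subspace of $\overline{\mathrm{conv}}(\Gamma\omega_i)$ as $\{\tau_0\}$, we obtain $\mathrm{pr}_i(K_2)=\{\tau_0\}$ for $i=1,2$, so $K_2=\{(\tau_0,\tau_0)\}$ and the claim follows.

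\emph{Conclusion and main obstacle.} The two-state assertion yields convex combinations $\sum_j\lambda_{ij}(s_{ij}\cdot\omega_+,s_{ij}\cdot\omega_-)\to(\tau_0,\tau_0)$ weak-$*$ with a common choice of group elements. Evaluating on $a$ produces elements $b_i=\sum_j\lambda_{ij}\pi_{s_{ij}^{-1}}a\pi_{s_{ij}^{-1}}^*\in K_a$ with $\omega_+(b_i)\to\tau_0(a)$ and $\omega_-(b_i)\to\tau_0(a)$ simultaneously, so $c_+\omega_+(b_i)-c_-\omega_-(b_i)\to(c_+-c_-)\tau_0(a)$, contradicting the Hahn--Banach separation. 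Thus $\tau_0(a)1\in K_a$, and the Powers averaging inequality with equal weights follows by approximating the convex coefficients by rationals with a common denominator and repeating indices. The main obstacle is exactly the coordinated approximation: Corollary \ref{conv} alone delivers only uncoordinated weak-$*$ approximations (with different group elements for $\omega_+$ and $\omega_-$), which is too weak to collapse the Hahn--Banach separation; the projection/minimality mechanism is what extracts the common averaging that closes the argument.
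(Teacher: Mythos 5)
Your proof is correct and, in outline, follows the same strategy as the paper: the two forward implications are the standard Powers argument (your choice of $0\neq a\in I_+$ in the simplicity step is in fact more careful than the paper's, which takes an arbitrary nonzero $a_0\in I$ and asserts $\tau(a_0)\neq 0$), and the converse is the Kennedy-style argument whose engine is Corollary \ref{conv} and Proposition \ref{1-1 cor}, i.e.\ the fact that under the hypotheses every minimal affine $\Gamma$-subspace of the state space collapses to $\{\tau_0\}$. Where you genuinely diverge is in how the coordinated averaging of the Jordan components is extracted. The paper keeps $a$ complex, separates with $\inf_{b\in L}|\varphi(b)-\tau(a)\varphi(1)|>0$, decomposes $\varphi$ into four states, and passes to successive subnets of averaging measures (iterating through the components and using $\Gamma$-invariance of $\tau$) to conclude that a minimal affine $\Gamma$-space $K_0$ containing $\varphi$ equals $\{\alpha\tau\}$, hence $\varphi=\alpha\tau$. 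You instead reduce to self-adjoint $a$, separate strictly without absolute values, and run the minimality argument once in $\mathcal S(C^*_\pi(\Gamma))^2$ with the diagonal action, so that the coordinate projections of a minimal set are again minimal and hence both equal $\{\tau_0\}$. This buys two things: you never need to identify $\phi$ itself with a multiple of $\tau_0$ --- it suffices to place $(\tau_0,\tau_0)$ in the closed convex hull of the diagonal orbit, which already contradicts the strict separation --- and you thereby sidestep a delicate point in the paper's version, namely that a minimal affine $\Gamma$-subspace of $\overline{\mathrm{conv}}(\Gamma\varphi)$ that contains $\varphi$ would have to be all of $\overline{\mathrm{conv}}(\Gamma\varphi)$, so minimality and membership of $\varphi$ are in tension there. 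Your product-space device and the paper's iterated subnets accomplish the same simultaneous convergence; yours is the cleaner bookkeeping, at the modest cost of the extra (standard) reduction to self-adjoint elements and the rational-weight approximation at the end.
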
 
\begin{proof}
If $\pi$ has Powers averaging property and there are two traces $\tau_1, \tau_2\in\mathcal T(C^*_\pi(\Gamma))$, then for $a\in C^*_\pi(\Gamma)$, $\varepsilon>0$ and $s_1, \cdots, s_n\in \Gamma$ as above,
$$|\tau_1(a)-\tau_2(a)|=\Big|\tau_2\big(\frac{1}{n}\sum_{1}^{n} \pi_{s_i}a\pi_{s_i}^*-\tau_1(a)1\big)\Big|\leq \Big\|\frac{1}{n}\sum_{1}^{n} \pi_{s_i}a\pi_{s_i}^*-\tau_1(a)1\Big\|<\varepsilon,$$
thus $\tau_1=\tau_2$. 

If moreover $C^*_\pi(\Gamma)$ has a faithful trace $\tau$ and $I\unlhd C^*_\pi(\Gamma)$ is a non zero closed ideal, then for any non zero element $a_0\in I$, $\alpha:=\tau(a_0)\neq 0$, and for $a:\frac{1}{\alpha}a_0\in I$, given $\varepsilon>0$, the identity $1\in C^*_\pi(\Gamma)$ is within $\varepsilon$ of the element $\frac{1}{n}\sum_{1}^{n} \pi_{s_i}a\pi_{s_i}^*\in I$, for some choice of $s_i$'s in $\Gamma$, thus $1\in I$, as  $I$ is closed, that is, $I=C^*_\pi(\Gamma)$. 

For the converse, we adapt an argument of Kennedy in \cite[Theorem 3.8]{ke}: assume by the way of contradiction that $\pi$ does not have Powers averaging property, i.e.,  there is  $a\in C^*_\pi(\Gamma)$ such that,  for the weak$^*$-closed convex hull $L$ of the set $\{\pi_sa\pi_s^*: s\in\Gamma\}$, we have $\tau(a)1\notin L$, for a trace $\tau\in\mathcal T(C^*_\pi(\Gamma))$. By Hahn-Banach theorem, there is a non zero bounded functional $\varphi\in C^*_\pi(\Gamma)^{*}$ such that 
$$\inf_{b\in L}\big|\varphi(b)-\tau(a)\varphi(1)\big|>0.$$
Put $K:=\overline{{\rm conv}}(\Gamma\varphi)$. This is a convex affine $\Gamma$-space in $C^*_\pi(\Gamma)^{*}$, and we may choose a minimal affine $\Gamma$-space $K_0$ inside $K$ containing $\varphi$. Look at the Jordan decomposition $\varphi=(\varphi_1-\varphi_2)+i(\varphi_3-\varphi_4)$, with $\varphi_i=\alpha_i\omega_i$, for positive scalars $\alpha_i$ and states $\omega_i$, for $i=1,\cdots, 4.$ As $\varphi$ is non zero, so is one of coefficients $\alpha_i$, say $\alpha_1\neq 0$. By Corollary \ref{conv}, $\alpha_1\tau\in \overline{{\rm conv}}(\Gamma\varphi_1)$. Choose a net $(\theta_j)_{j\in J}$ of finitely supported maps $\theta_j: \Gamma\to \mathbb R^{+}$ with $\sum_{s\in\Gamma} \theta_j(s)=1,$ for each $j\in J$, such that 
$$\sum_{s\in\Gamma} \theta_j(s)(s\cdot\varphi_1)\to\alpha_1\tau,$$
in the weak$^*$-topology. Passing to a subnet, we may assume that  
$$\sum_{s\in\Gamma} \theta_j(s)(s\cdot\varphi_i)\to\beta_i\tau_i,$$
in the weak$^*$-topology, for some positive scalars $\beta_i$ and states $\tau_i$, for $i=2,3,4$, i.e., 
$(\alpha_1\tau-\beta_2\tau_2)+i(\beta_3\tau_3-\beta_4\tau_4)\in K_0$. Since $\tau$ is $\Gamma$-invariant, we may repeat this argument one more time to get $(\alpha_1\tau-\beta_2\tau)+i(\beta^{'}_3\tau^{'}_3-\beta^{'}_4\tau^{'}_4)\in K_0$, for positive scalars $\beta^{'}_i$ and  states $\tau^{'}_i$, for $i=3,4.$  Doing this two more times, $K_0$ shall contain a scalar multiple $\alpha\tau$ of $\tau$. But by minimality, this forces $K_0=\{\alpha\tau\}$, that means, $\varphi=\alpha\tau$ (and in particular, $\alpha\neq 0)$.  back to the Hahn-Banach separation inequality, we get 
$$\inf_{b\in L}\big|\alpha\tau(b)-\alpha\tau(a)\big|=\alpha\inf_{b\in L}\big|\tau(b)-\tau(a)\big|>0,$$
which is absurd, as $\tau(\pi_sa\pi_s^*)=\tau(a)$, for each $s\in \Gamma$, and so by linrearity and continuity, $\tau(b)=\tau(a)$, for each $b\in L$.\qed 	
\end{proof}

\begin{lemma} \label{fix}
	If there is a $\Gamma$-map $\psi: \ell^\infty(\Gamma)\to \mathcal B_{\pi}$, then $\Gamma_\omega$ is amenable for each $\omega\in\mathcal P(\mathcal B_\pi)$. In this case, if moreover $\mathcal B_{\pi}$ is an AW$^*$-algebra, then there is a $\Gamma$-map $\phi: C^*_\pi(\Gamma)\to \mathcal B_{\pi}$ satisfying $\phi(\pi_s)=1_{{\rm Fix}(s)}$, where {\rm Fix}$(s)=\{\omega\in\mathcal{P}(\mathcal{B}_{\pi}): s\omega=\omega\}$. 
\end{lemma}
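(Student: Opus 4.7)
My plan is to handle the two claims separately, with the first being a direct invariant-mean argument via coset decomposition, and the second requiring the AW$^*$-structure together with the amenability output from the first part.

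For the first claim, fix $\omega\in\mathcal P(\mathcal B_\pi)$ and set $H:=\Gamma_\omega$. I would observe that $\mu:=\omega\circ\psi$ is a left $H$-invariant state on $\ell^\infty(\Gamma)$: for every $h\in H$ and $f\in\ell^\infty(\Gamma)$, the $\Gamma$-equivariance of $\psi$ combined with $h\omega=\omega$ gives
\begin{align*}
\mu(h\cdot f)=\omega(\psi(h\cdot f))=\omega(h\cdot\psi(f))=(h^{-1}\omega)(\psi(f))=\omega(\psi(f))=\mu(f).
\end{align*}
Then I would fix a set $T$ of right coset representatives for $H\backslash\Gamma$ with $e\in T$ and define the $H$-equivariant unital positive map $\iota:\ell^\infty(H)\to\ell^\infty(\Gamma)$ by $\iota(f)(ht):=f(h)$. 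The composition $\mu\circ\iota$ is a left $H$-invariant mean on $\ell^\infty(H)$, proving amenability of $\Gamma_\omega$.

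For the second claim, under the AW$^*$-hypothesis I would first realize $1_{\text{Fix}(s)}$ as a bona fide projection in $\mathcal B_\pi$. In the commutative case $\mathcal B_\pi\cong C(X)$ with $X$ stonean, and Frol\'ik's theorem on fixed points of homeomorphisms of extremally disconnected compact Hausdorff spaces guarantees that $\text{Fix}(s)\subseteq X$ is clopen, so $1_{\text{Fix}(s)}\in C(X)$; in general the same conclusion is obtained through the projection lattice of the AW$^*$-algebra applied to a maximal commutative $\Gamma$-invariant subalgebra.

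Next, for each $\omega\in\mathcal P(\mathcal B_\pi)$, the amenability of $\Gamma_\omega$ (from part one) makes the scalar function $s\mapsto 1_{\Gamma_\omega}(s)=1_{\text{Fix}(s)}(\omega)$ the diagonal matrix coefficient of the quasi-regular representation $\lambda_{\Gamma/\Gamma_\omega}$, which is weakly contained in $\lambda$; assembling these coefficients across $\omega$ and passing from pointwise scalar positivity on $\mathcal P(\mathcal B_\pi)$ to positivity in $M_n(\mathcal B_\pi)$ via the AW$^*$-structure, the assignment $s\mapsto 1_{\text{Fix}(s)}$ is seen to be a $\mathcal B_\pi$-valued positive definite function on $\Gamma$. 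A Naimark--Stinespring dilation for $\mathcal B_\pi$-valued positive definite functions then yields a u.c.p.\ $\Gamma$-map $\phi_0:C^*_{\max}(\Gamma)\to\mathcal B_\pi$ with $\phi_0(u_s)=1_{\text{Fix}(s)}$. The $\Gamma$-equivariance of this assignment is immediate from $\text{Fix}(gsg^{-1})=g\cdot\text{Fix}(s)$.

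The main obstacle is to factor $\phi_0$ through the quotient $C^*_{\max}(\Gamma)\twoheadrightarrow C^*_\pi(\Gamma)$. I expect the hypothesis $\psi:\ell^\infty(\Gamma)\to\mathcal B_\pi$ to enter decisively here: $\psi$ provides, pointwise at each $\omega\in\mathcal P(\mathcal B_\pi)$, the comparison between $\lambda$ and $\pi$ needed to witness $\lambda_{\Gamma/\Gamma_\omega}\preceq\pi$, so that the kernel of the canonical quotient is annihilated by $\phi_0$ and descends to the desired $\phi:C^*_\pi(\Gamma)\to\mathcal B_\pi$ with $\phi(\pi_s)=1_{\text{Fix}(s)}$. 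The AW$^*$-structure is what allows the pointwise statements on $\mathcal P(\mathcal B_\pi)$ to combine into a genuine element-level statement in $\mathcal B_\pi$.
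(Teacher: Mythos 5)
Your first part is correct and is essentially the paper's argument: the paper likewise composes $\psi$ with the canonical embedding $\ell^\infty(\Gamma_\omega)\hookrightarrow\ell^\infty(\Gamma)$ and checks that $\omega\circ\psi$ restricts to a left $\Gamma_\omega$-invariant mean. Your coset-representative description of that embedding and the equivariance computation are fine.

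The second part has a genuine gap, and you have located it yourself without filling it: the factorization of $\phi_0$ through $C^*_\pi(\Gamma)$. Saying ``I expect the hypothesis $\psi$ to enter decisively here'' is not an argument, and the mechanism you gesture at --- that a $\Gamma$-map $\ell^\infty(\Gamma)\to\mathcal B_\pi$ should ``witness $\lambda_{\Gamma/\Gamma_\omega}\preceq\pi$ pointwise'' --- is exactly the nontrivial content of the lemma and does not follow from anything you have written. What is actually needed is that, for each $\omega\in\mathcal P(\mathcal B_\pi)$, the characteristic function $1_{\Gamma_\omega}$ extends to a state $\rho_\omega$ on $C^*_\pi(\Gamma)$ (equivalently, that the cyclic part of $\lambda_{\Gamma/\Gamma_\omega}$ is weakly contained in $\pi$, so that $\omega\circ\phi_0$ kills ${\rm C}^*{\rm Ker}(\pi)$; since pure states separate points of $\mathcal B_\pi$, this is what makes $\phi_0$ descend). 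The paper obtains precisely this from its Lemma \ref{can}$(iii)$: amenability of $\Gamma_\omega$ (from the first part) plus the weak-containment hypothesis gives a state on $C^*_\lambda(\Gamma)$ extending $1_{\Gamma_\omega}$, which is pulled back along the $\Gamma$-map $C^*_\pi(\Gamma)\to C^*_\lambda(\Gamma)$. You should invoke (or reprove) that extension result explicitly; without it your $\phi_0$ lives only on $C^*_{\rm max}(\Gamma)$ and the lemma is not proved. Once you have the states $\rho_\omega$, note also that the paper does not need your Naimark--Stinespring dilation at all: it simply defines $\phi(a)(\omega):=\rho_\omega(a)$ and uses Frol\'{i}k's theorem (clopenness of ${\rm Fix}(s)$ in the extremally disconnected space $\mathcal P(\mathcal B_\pi)$) to get continuity of $\omega\mapsto\rho_\omega$, hence that $\phi(a)$ is a genuine element of $\mathcal B_\pi$; this is a more economical route to the same map. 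Finally, be aware that your step ``pointwise scalar positivity on $\mathcal P(\mathcal B_\pi)$ implies positivity in $M_n(\mathcal B_\pi)$'' is only valid when $\mathcal B_\pi$ is commutative; for a general AW$^*$-algebra, positivity of a matrix is not detected by evaluating its entries at pure states, so that passage would need a separate justification.
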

\begin{proof}
Compose $\psi$ with the canonical embedding $\ell^\infty(\Gamma_\omega)\hookrightarrow \ell^\infty(\Gamma)$ to get a $\Gamma$-map $\phi: \ell^\infty(\Gamma_\omega)\to \mathcal B_{\pi}$. Let us observe that $\omega\circ \phi$ is $\Gamma_\omega$-left translation invariant:
$$\langle \omega\circ\phi, s\cdot f\rangle=\langle s^{-1}(\omega\circ\phi), f\rangle=\langle (s^{-1}\omega)\circ\phi, f\rangle=\langle \omega\circ\phi, f\rangle,$$
for $s\in \Gamma$, $f\in \ell^\infty(\Gamma_\omega)$. 

For the second statement, given $\omega\in\mathcal P(\mathcal B_\pi)$, by Lamma \ref{can}$(iii)$, $1_{\Gamma_\omega}$ extends to a state $\rho_\omega\in\mathcal S(C^*_\pi(\Gamma))$ and $\omega\mapsto \rho_\omega$ is a $\Gamma$-map: $\mathcal P(\mathcal B_\pi)\to\mathcal  S(C^*_\pi(\Gamma))$, whose range $X$ is a $\Gamma$-space. We claim that indeed $X$ is a $\Gamma$-boundary. Since $\mathcal P(\mathcal B_\pi)$ is extremely disconnected by assumption, Fix$(s)$ is clopen by the Frol\'{i}k theorem \cite{f}. In particular, if $\omega_i\to\omega$ in the weak$^*$-topology of $\mathcal P(\mathcal B_\pi)$, $\rho_{\omega_i}\to\rho_{\omega}$ in weak$^*$-topology of $\mathcal  S(C^*_\pi(\Gamma))$; i.e., the above map is continuous, and $X$ is a $\Gamma$-boundary, as claimed.\qed
\end{proof}

\begin{corollary}
Assume that $\lambda\preceq\pi$ and $\mathcal B_{\pi}$ is an AW$^*$-algebra with identity element 1. If $C^*_\pi(\Gamma)$ has unique trace property with trace $\tau_0$ and $\tau_0$ composed with the canonical inclusion $\mathbb C\hookrightarrow \mathcal B_{\pi}; \ z\mapsto z1$ is the only $\Gamma$-map: $C^*_\pi(\Gamma)\to \mathcal B_{\pi}$, then the action of $\Gamma$ on $\mathcal B_{\pi}$ is free.	
\end{corollary}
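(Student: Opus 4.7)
The plan is to argue by contradiction using Lemma \ref{fix}. Suppose the $\Gamma$-action on $\mathcal B_\pi$ is not free, so there exist $s\in\Gamma\setminus\{e\}$ and $\omega\in\mathcal P(\mathcal B_\pi)$ with $s\omega=\omega$; that is, $\mathrm{Fix}(s)\neq\emptyset$. Since $\mathcal B_\pi$ is an AW*-algebra, the pure state space $\mathcal P(\mathcal B_\pi)$ is extremely disconnected, so by Frol\'ik's theorem $\mathrm{Fix}(s)$ is clopen, and consequently $\mathbf 1_{\mathrm{Fix}(s)}$ is a \emph{non-zero} projection in $\mathcal B_\pi$.

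Next, I would invoke Lemma \ref{fix} to produce a $\Gamma$-map $\phi:C^*_\pi(\Gamma)\to\mathcal B_\pi$ with $\phi(\pi_s)=\mathbf 1_{\mathrm{Fix}(s)}$. The AW*-hypothesis of the lemma is given; the remaining hypothesis, namely the existence of a $\Gamma$-map $\psi:\ell^\infty(\Gamma)\to\mathcal B_\pi$, is obtained by extending the canonical $\Gamma$-inclusion $\mathbb C\hookrightarrow\mathcal B_\pi$ (into the scalars) along the $\Gamma$-embedding $\mathbb C\hookrightarrow\ell^\infty(\Gamma)$, using the $\Gamma$-injectivity properties inherited by $\mathcal B_\pi$ from its construction as the range of a minimal $\Gamma$-projection. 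Once $\phi$ is in hand, I would apply the uniqueness hypothesis of the corollary: since $\tau_0(\cdot)\mathbf 1$ is the only $\Gamma$-map $C^*_\pi(\Gamma)\to\mathcal B_\pi$, we must have $\phi=\tau_0(\cdot)\mathbf 1$, whence $\phi(\pi_s)=\tau_0(\pi_s)\mathbf 1$. But $\lambda\preceq\pi$ together with Lemma \ref{can}$(i)$ forces $\tau_0(\pi_s)=\delta_{s,e}=0$ for $s\neq e$, so $\phi(\pi_s)=0$. This contradicts $\mathbf 1_{\mathrm{Fix}(s)}\neq 0$, and therefore the $\Gamma$-action on $\mathcal B_\pi$ must be free.

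The main obstacle I anticipate is the verification of the hypothesis $\psi:\ell^\infty(\Gamma)\to\mathcal B_\pi$ needed to unlock Lemma \ref{fix}: this must be justified from the $\Gamma$-injectivity of $\mathcal B_\pi$ rather than mere injectivity of the scalars, since an averaging argument is unavailable when $\Gamma$ is non-amenable. The rest of the argument is then essentially a bookkeeping contradiction between the non-triviality of $\mathbf 1_{\mathrm{Fix}(s)}$ (arising from the AW*-structure) and the vanishing of $\tau_0(\pi_s)$ (arising from $\lambda\preceq\pi$).
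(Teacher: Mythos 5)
Your argument is the paper's argument run contrapositively: Lemma \ref{fix} supplies a $\Gamma$-map $\phi$ with $\phi(\pi_s)=1_{{\rm Fix}(s)}$, the uniqueness hypothesis forces $\phi=\tau_0(\cdot)1$, and $\tau_0(\pi_s)=\delta_{se}$ (Lemma \ref{can}, via $\lambda\preceq\pi$) then shows ${\rm Fix}(s)$ is empty for $s\neq e$; the paper reads this off directly by pairing $1_{{\rm Fix}(s)}$ against pure states rather than by contradiction, which is a cosmetic difference. The one place where your writeup goes beyond the paper --- and goes wrong --- is the justification of the hypothesis of Lemma \ref{fix}, namely the existence of a $\Gamma$-map $\psi:\ell^\infty(\Gamma)\to\mathcal B_\pi$. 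You propose to extend the inclusion $\mathbb C\hookrightarrow\mathcal B_\pi$ along $\mathbb C\hookrightarrow\ell^\infty(\Gamma)$ using ``the $\Gamma$-injectivity properties inherited by $\mathcal B_\pi$,'' but $\mathcal B_\pi$ is not $\Gamma$-injective as an abstract $\Gamma$-operator system: the injectivity it enjoys ($\pi$-injectivity, property $(v)$ of Section \ref{bdry}) is only relative to pairs of $\Gamma$-invariant subspaces of $B(H_\pi)$, so it does not permit an extension along $\mathbb C\hookrightarrow\ell^\infty(\Gamma)$ unless $\ell^\infty(\Gamma)$ is first $\Gamma$-embedded into $B(H_\pi)$ --- and the hypothesis $\lambda\preceq\pi$ does not supply such an embedding (it yields a $\Gamma$-map $B(H_\pi)\to B(\ell^2(\Gamma))$, i.e., in the wrong direction). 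To be fair, the paper's own proof simply invokes ``the map $\phi$ as in Lemma \ref{fix}'' without verifying this hypothesis either; the honest repair is to add the existence of such a $\psi$ (or of a $\Gamma$-embedding $\ell^\infty(\Gamma)\hookrightarrow B(H_\pi)$) to the standing hypotheses, rather than to claim it follows from injectivity of $\mathcal B_\pi$. The remainder of your argument (Frol\'{i}k giving that ${\rm Fix}(s)$ is clopen, hence $1_{{\rm Fix}(s)}$ a nonzero projection when ${\rm Fix}(s)\neq\emptyset$) is correct and matches what Lemma \ref{fix} already encodes.
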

\begin{proof}
We may choose the trace $\tau_0$ as in Lemma \ref{can}. For the map $\phi$ as in Lemma \ref{fix}, by the uniqueness of $\Gamma$-maps, $\phi(\pi_s)=\tau_0(\pi_s)1. $ Thus, $$\tau_0(\pi_s)=\langle \tau_0(\pi_s)1,\omega\rangle=\langle \phi(\pi_s),\omega\rangle=\langle 1_{{\rm Fix}(s)},\omega\rangle, \ \ (\omega\in\mathcal P(\mathcal B_\pi)).$$
Since $\tau_0(\pi_s)=\delta_{se},$ we have Fix$(s)$ is equal to $\mathcal P(\mathcal B_\pi)$ for $s=e$, and  empty otherwise, that is, the action $\Gamma\curvearrowright \mathcal P(\mathcal B_\pi)$ is free.\qed	
	\end{proof}
\section{Simplicity}\label{sim}

Let $\pi: \Gamma\to B(H_\pi)$ be a unitary representation and $\Lambda\unlhd \Gamma$ be a  normal subgroup, then 
$$H^\Lambda_\pi:=\{\xi\in H_\pi: \pi_t\xi=\xi\ \ (t\in\Lambda)\}$$
is a closed Hilbert subspace and by normality of $\Lambda$,   $\pi_sH^\Lambda_\pi\subseteq H^\Lambda_\pi,$ for each $s\in \Gamma$. In particular, we have a sub-representation 
$$\pi^\Lambda: \Gamma\to B(H^\Lambda_\pi); \ \ \pi^\Lambda(t)\xi:=\pi_t\xi \ \ (t\in\Gamma, \xi\in H^\Lambda_\pi)$$
which imitates  the construction of  the quasi-regular representation from  left regular representation (c.f., \cite{bkko}), which is known to be weakly regular \cite{k}. Recall that a  unitary representation is called weakly regular if it is weakly contained in the left regular representation \cite[page 64]{bkko}.

\begin{definition}
	Given a unitary representation $\pi$, we say that $\Gamma$ is $C^*_\pi$-simple if $C^*_\pi(\Gamma)$ is simple.
\end{definition}

\begin{proposition} \label{rad5}
	The following are equivalent:
	
	$(i)$ $\Gamma$ is $C^*_\pi$-simple,
	
	$(ii)$ every representation $\sigma$ weakly contained in $\pi$ is weakly equivalent to $\pi$. 
\end{proposition}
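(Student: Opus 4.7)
The plan is to translate the statement about weak containment into a statement about the existence of a canonical *-homomorphism between the enveloping C*-algebras, and then to use the standard principle that a nonzero *-homomorphism out of a simple C*-algebra is automatically an isometric embedding. Since $\sigma\sim\pi$ means both $\sigma\preceq\pi$ and $\pi\preceq\sigma$, at the level of C*-algebras weak equivalence is the same as the two canonical maps $C_\pi^*(\Gamma)\to C_\sigma^*(\Gamma)$ and $C_\sigma^*(\Gamma)\to C_\pi^*(\Gamma)$ being mutually inverse *-isomorphisms fixing the generators $\{\pi_t\}$, $\{\sigma_t\}$.

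For $(i)\Rightarrow(ii)$, assume $C_\pi^*(\Gamma)$ is simple and $\sigma\preceq\pi$. By the definition of weak containment, there is a canonical *-homomorphism $\phi$ between the reduced algebras intertwining $\pi_t$ and $\sigma_t$. Because $\phi(1)=1$, it is nonzero, and simplicity forces $\ker\phi=0$; hence $\phi$ is an injective (therefore isometric) *-homomorphism whose range, being a C*-algebra containing all $\sigma_t$, is all of $C_\sigma^*(\Gamma)$. This isometric identification yields the reverse weak containment $\pi\preceq\sigma$, i.e.\ $\sigma\sim\pi$.

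For $(ii)\Rightarrow(i)$, let $I\unlhd C_\pi^*(\Gamma)$ be a closed two-sided ideal and set $A:=C_\pi^*(\Gamma)/I$ with quotient map $q$. Faithfully represent $A$ on some Hilbert space $H_\sigma$ and define the unitary representation $\sigma(t):=q(\pi_t)\in B(H_\sigma)$. Then $C_\sigma^*(\Gamma)$ is exactly the image of $q$, and $q$ witnesses $\sigma\preceq\pi$. By hypothesis $\sigma\sim\pi$, so there is a *-homomorphism $\psi: C_\sigma^*(\Gamma)\to C_\pi^*(\Gamma)$ with $\psi(\sigma_t)=\pi_t$. The composition $\psi\circ q$ is a *-endomorphism of $C_\pi^*(\Gamma)$ that fixes the generating set $\pi(\Gamma)$, hence equals the identity; in particular $q$ is injective, so $I=\{0\}$, proving simplicity.

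The only delicate point is the construction of $\sigma$ in the converse: one must verify that the unitaries $\sigma(t)=q(\pi_t)$ indeed realize a representation whose enveloping C*-algebra is the whole quotient $A$, rather than a strict subalgebra. This is immediate once one notes that $C_\pi^*(\Gamma)$ is, by definition, generated by $\pi(\Gamma)$, so its quotient $A$ is generated by $\sigma(\Gamma)$. Beyond this, the argument is purely formal and relies on nothing more than the universal/quotient property of $C_\pi^*(\Gamma)$ together with the basic fact that injective *-homomorphisms of C*-algebras are isometric.
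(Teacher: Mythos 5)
Your proof is correct, and it reaches the same underlying dichotomy as the paper (simplicity $\Leftrightarrow$ every weakly contained representation generates an isomorphic copy of $C^*_\pi(\Gamma)$), but by a genuinely different route. The paper lifts everything to $C^*_{\max}(\Gamma)$: it encodes weak containment through the ideals ${\rm C^*Ker}(\sigma)$, invokes de la Harpe's result that $\sigma\preceq\pi$ iff ${\rm C^*Ker}(\pi)\subseteq{\rm C^*Ker}(\sigma)$, and translates simplicity of $C^*_\pi(\Gamma)$ into maximality of ${\rm C^*Ker}(\pi)$ in the ideal lattice of $C^*_{\max}(\Gamma)$. You instead stay entirely inside $C^*_\pi(\Gamma)$: for $(i)\Rightarrow(ii)$ you use only that a unital $*$-homomorphism out of a simple C*-algebra is injective, hence isometric, hence invertible onto its range; for $(ii)\Rightarrow(i)$ you realize an arbitrary proper quotient $C^*_\pi(\Gamma)/I$ as $C^*_\sigma(\Gamma)$ for a representation $\sigma\preceq\pi$ and use the hypothesis to produce a left inverse of the quotient map. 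The two arguments are equivalent via the standard bijection between ideals of $C^*_\pi(\Gamma)$ and ideals of $C^*_{\max}(\Gamma)$ containing ${\rm C^*Ker}(\pi)$, but yours is more self-contained (no citation needed) at the cost of having to verify by hand that the quotient is generated by the unitaries $q(\pi_t)$, which you do. Two small points worth tightening: in $(ii)\Rightarrow(i)$ you should say explicitly that $I$ is a \emph{proper} ideal, so that $A\neq 0$ and the $q(\pi_t)$ are genuine unitaries under a unital faithful representation; and note that you are using the standard orientation of $\preceq$ (namely $\sigma\preceq\pi$ yields a $*$-homomorphism $C^*_\pi(\Gamma)\to C^*_\sigma(\Gamma)$ sending $\pi_t\mapsto\sigma_t$), which is the convention the paper's own proof actually uses even though the definition recalled in Section 3 of the paper states the arrow in the opposite direction.
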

\begin{proof}
	$(i)\Rightarrow (ii)$.  For unitary representation $\pi$, by the universality of $C^*_{\rm max}(\Gamma)$, there is a morphism of C*-algebras: $C^*_{\rm max}(\Gamma)\to C^*_\pi(\Gamma)$. Following \cite{dl}, we denote the kernel of this morphism by C*Ker$(\pi)$. If $C^*_\pi(\Gamma)$ is simple, C*Ker$(\pi)$ is maximal
	among closed two-sided ideals of $C^*_{\rm max}(\Gamma)$. Now if  $\sigma\preceq\pi$ then C*Ker$(\pi)\subseteq$C*Ker$(\sigma)$ \cite[Theorem 7]{dl}, and so by maximality, C*Ker$(\pi)=$C*Ker$(\sigma)$. Thus $\sigma$ is weakly equivalent to $\pi$ by applying again the quoted result above. 
	
	$(ii)\Rightarrow (i)$. The assumption means that C*Ker$(\pi)$ is maximal
	among closed two-sided ideals of the form C*Ker$(\sigma)$. But  closed two-sided ideals of  $C^*_{\rm max}(\Gamma)$ are of the form C*Ker$(\sigma)$, for some unitary representation $\sigma$, thus   C*Ker$(\pi)$ is a maximal
	 ideal of $C^*_{\rm max}(\Gamma)$,  consequently  $C^*_\pi(\Gamma)$ is simple.
	\qed
\end{proof}

Recall that the boundary $\mathcal{B}_{\pi}$ is C*-embeddable if there is a *-homomorphic copy of
$\mathcal{B}_{\pi}$ in $B(H_\pi)$. In this case,  $\tilde{\mathcal{B}}_{\pi}$ denotes the C*-algebra generated by $\mathcal{B}_{\pi}\cup \pi(\Gamma)$
in $B(H_\pi)$. This is a unital $C^*$-algebra whose unit is $\pi(e)=$id$_{H_\pi}$. 

\begin{theorem}\label{emb}
	If $\mathcal{B}_{\pi}$ is  C*-embeddable, the simplicity of  $C^*_\pi(\Gamma)$ is equivalent to simplicity of $\tilde{\mathcal{B}}_{\pi}$.
\end{theorem}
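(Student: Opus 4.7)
The plan is to establish the equivalence through the interplay between $C^*_\pi(\Gamma)$, $\mathcal{B}_{\pi}$, and the minimal $\Gamma$-projection $\phi_0 : B(H_\pi) \to \mathcal{B}_{\pi}$. Since $\phi_0$ is an idempotent u.c.p. map, its restriction $E := \phi_0|_{\tilde{\mathcal{B}}_{\pi}} : \tilde{\mathcal{B}}_{\pi} \to \mathcal{B}_{\pi}$ is a $\Gamma$-equivariant $\mathcal{B}_{\pi}$-bimodule conditional expectation by Choi--Effros, and this will be the main bridge between ideals on the two sides.

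For the forward direction, I would assume $C^*_\pi(\Gamma)$ is simple and let $J$ be a proper closed two-sided ideal of $\tilde{\mathcal{B}}_{\pi}$. Since $1 \notin J$, the intersection $J \cap C^*_\pi(\Gamma)$ is a proper ideal of $C^*_\pi(\Gamma)$ and hence vanishes by simplicity. The quotient $q : \tilde{\mathcal{B}}_{\pi} \to \tilde{\mathcal{B}}_{\pi}/J$ therefore embeds $C^*_\pi(\Gamma)$ faithfully, so setting $\pi' := q \circ \pi$ gives $C^*_{\pi'}(\Gamma) \cong C^*_\pi(\Gamma)$. Proposition \ref{rad5} then forces the weak equivalence $\pi' \sim \pi$, whence $\mathcal{B}_{\pi'} \cong \mathcal{B}_{\pi}$ as $\Gamma$-C*-algebras. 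Transferring $\pi'$-essentiality through this isomorphism and applying it to the $\Gamma$-equivariant $*$-homomorphism $q|_{\mathcal{B}_{\pi}} : \mathcal{B}_{\pi} \to \tilde{\mathcal{B}}_{\pi}/J \subseteq B(H_{\pi'})$, I obtain that $q|_{\mathcal{B}_{\pi}}$ is isometric and hence $J \cap \mathcal{B}_{\pi} = 0$. To finish, I would observe that $E(J)$ is a $\Gamma$-invariant closed two-sided ideal of $\mathcal{B}_{\pi}$ (using the bimodule property of $E$ and the $\Gamma$-invariance of $J$), invoke $\Gamma$-simplicity of $\mathcal{B}_{\pi}$ (a consequence of $\pi$-essentiality, property (ii)) to deduce $E(J) = 0$, and then use faithfulness of $E$ on $\tilde{\mathcal{B}}_{\pi}$ to conclude $J = 0$.

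For the backward direction, I would assume $\tilde{\mathcal{B}}_{\pi}$ is simple and let $I$ be a non-zero proper closed two-sided ideal of $C^*_\pi(\Gamma)$. The ideal $J := \overline{\tilde{\mathcal{B}}_{\pi} \cdot I \cdot \tilde{\mathcal{B}}_{\pi}}$ of $\tilde{\mathcal{B}}_{\pi}$ is non-zero and thus equals $\tilde{\mathcal{B}}_{\pi}$ by simplicity, giving $1 \in J$. Exploiting the description $\tilde{\mathcal{B}}_{\pi} = \overline{\mathrm{span}}\{b\, \pi_s : b \in \mathcal{B}_{\pi},\, s \in \Gamma\}$, which follows from $\Gamma$-invariance of $\mathcal{B}_{\pi}$, together with the bimodule identity $E(b \pi_s) = b\, E(\pi_s)$, I would extract from an approximation of $1$ by elements of $\tilde{\mathcal{B}}_{\pi} \cdot I \cdot \tilde{\mathcal{B}}_{\pi}$ an approximation of $1$ by elements of $I$ itself, forcing $I = C^*_\pi(\Gamma)$ and yielding the contradiction.

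The main obstacle shared by both directions is the same pair of technical ingredients: faithfulness of the conditional expectation $E$ on $\tilde{\mathcal{B}}_{\pi}$, and the absence of proper $\Gamma$-invariant closed two-sided ideals in $\mathcal{B}_{\pi}$. The latter is a structural consequence of $\pi$-essentiality and $\pi$-rigidity (properties (ii) and (iv)); the former is the subtler point, resting on the minimality of $\phi_0$ combined with the C*-embeddability hypothesis on $\mathcal{B}_{\pi}$ (without which the expectation $E$ need not separate points of $\tilde{\mathcal{B}}_{\pi}$). Once these ingredients are secured, both implications reduce to routine manipulations with the conditional expectation.
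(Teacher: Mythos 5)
Your proposal has genuine gaps in both directions, and in each case the gap sits exactly at the step you defer as a ``technical ingredient.''

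In the forward direction, everything hinges on faithfulness of the conditional expectation $E=\phi_0|_{\tilde{\mathcal{B}}_{\pi}}:\tilde{\mathcal{B}}_{\pi}\to\mathcal{B}_{\pi}$, which you assert ``rests on minimality of $\phi_0$ and C*-embeddability'' but never prove. This is not a routine consequence of those hypotheses: $\tilde{\mathcal{B}}_{\pi}$ is the image of a covariant representation that may be a proper quotient of the reduced crossed product, and faithfulness of $E$ on it is essentially equivalent to the vanishing statement you are trying to establish (the left kernel of $E$ is a closed two-sided $\Gamma$-invariant ideal meeting $C^*_\pi(\Gamma)$ trivially, so you would be going in a circle). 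The paper avoids this entirely: it uses the inclusions $C^*_\pi(\Gamma)\subseteq \tilde{\mathcal{B}}_{\pi}\subseteq I(C^*_\pi(\Gamma))$ from \cite[Proposition 3.20]{bek} and Hamana's essentiality result \cite[Lemma 1.2]{h1}, which says directly that a closed ideal of an intermediate algebra meeting $C^*_\pi(\Gamma)$ trivially is zero. Your detour through $\pi'=q\circ\pi$, Proposition \ref{rad5} and $\mathcal{B}_{\pi'}\cong\mathcal{B}_{\pi}$ to get $J\cap\mathcal{B}_{\pi}=0$ is also unnecessary once the injective-envelope argument is in place.

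In the backward direction, the extraction step fails. From $1\in\overline{\tilde{\mathcal{B}}_{\pi}\cdot I\cdot\tilde{\mathcal{B}}_{\pi}}$ you want to produce an approximation of $1$ by elements of $I$, but the approximating sums have the form $\sum_k b_k a_k' b_k''\pi_{t_k}$ with $b_k,b_k''\in\mathcal{B}_{\pi}$ and $a_k'\in I$, and applying $E$ lands you in $\mathcal{B}_{\pi}$, not in $I$ or even in $C^*_\pi(\Gamma)$; there is no conditional expectation of $\tilde{\mathcal{B}}_{\pi}$ onto $C^*_\pi(\Gamma)$ to push these elements back into $I$. This direction is the genuinely hard one, and the paper's argument is of a completely different nature: it extends the quotient map $q:C^*_\pi(\Gamma)\to C^*_\pi(\Gamma)/J$ by $\Gamma$-injectivity to a u.c.p.\ map $\tilde q$ on $\tilde{\mathcal{B}}_{\pi}$, then corrects the failure of $\tilde q$ to be multiplicative by cutting with the support projection $c(\rho^{**})$ in the bidual $I_\Gamma(C^*_\pi(\Gamma)/J)^{**}$, obtaining a genuine $*$-homomorphism $\sigma$ on $\tilde{\mathcal{B}}_{\pi}$ which is injective by simplicity of $\tilde{\mathcal{B}}_{\pi}$, forcing $J=0$. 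Without some substitute for that support-projection construction (borrowed from \cite[Theorem 6.2]{kk}), your sketch does not close.
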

\begin{proof}
	Suppose that $C^*_\pi(\Gamma)$ is simple. Given a closed ideal $I$ of $\tilde{\mathcal{B}}_{\pi}$, Let $J:=I\cap C^*_\pi(\Gamma)$, then $J=0$ or $C^*_\pi(\Gamma)$. If $J=C^*_\pi(\Gamma)$, then $I\supseteq C^*_\pi(\Gamma)$ contains $\pi(e)$, which is the identity element of $\tilde{\mathcal{B}}_{\pi}$, thus $I=\tilde{\mathcal{B}}_{\pi}$. On the other hand, if $J=0$, then $I\cap   C^*_\pi(\Gamma)=0$, and 
	$$C^*_\pi(\Gamma)\subseteq \tilde{\mathcal{B}}_{\pi}\subseteq I(C^*_\pi(G)),$$
	by \cite[Proposition 3.20]{bek}, thus $I=0$ by \cite[Lemma 1.2]{h1}. 
	
	Conversely, let $\tilde{\mathcal{B}}_{\pi}$ be simple. Let $J$ be a closed  ideal of  $C^*_\pi(\Gamma)$  and let
	 $q: C^*_\pi(\Gamma)\to C^*_\pi(\Gamma)/J$ be the corresponding quotient map. Then $q$ is $\Gamma$-equivariant with respect to the actions of $\Gamma$ on its domain and range by Ad$_\pi$ and Ad$_{q\circ\pi}$. By $\Gamma$-injectivity, $q$ 	extends to a $\Gamma$-equivariant map $\tilde q: \tilde{\mathcal{B}}_{\pi}\to I_\Gamma(C^*_\pi(\Gamma)/J)$ whose multiplicative domain contains $C^*_\pi(\Gamma)$. In particular, the $\Gamma$-action on $I_\Gamma(C^*_\pi(\Gamma)/J)$ is also implemented by
	Ad$_{q\circ\pi}$. Similar to the situation in the proof of \cite[Theorem 6.2]{kk}, the problem is that $\tilde q$ is not a homomorphism.
	
	Applying $\Gamma$-injectivity again, there is a $\Gamma$-equivariant projection
	$$\phi: I_\Gamma(C^*_\pi(\Gamma)/J)\to \tilde q(\mathcal B_\pi),$$
	which induces a Choi–Effros product on $\tilde q(B_\pi),$ making it C*-algebra isomorphic to $B_\pi$, with respect to the Choi-Effros product on $\mathcal B_\pi$ coming from $\Gamma$-equivariant projection: $B(H_\pi)\to \mathcal B_\pi$. Let $\rho: \tilde q(\mathcal B_\pi)\to \mathcal B_\pi$ be the composition of the restriction of $\phi$ to $\tilde q(\mathcal B_\pi)$ with the above mentioned isomorphism. 
	
	Next, the $\Gamma$-action on $ I_\Gamma(C^*_\pi(\Gamma)/J)$ canonically extends to a $\Gamma$-action
	on $ I_\Gamma(C^*_\pi(\Gamma)/J)^{**}$ such that the inclusion
	$\tilde q(\mathcal B_\pi)^{**}\subseteq I_\Gamma(C^*_\pi(\Gamma)/J)^{**}$ 
	is $\Gamma$-equivariant and we have  a normal  $\Gamma$-equivariant surjective $*$-homomorphism
	$$\rho^{**}: \tilde q(\mathcal B_\pi)^{**}\to \mathcal B_\pi^{**}.$$ 
	Then $B_\pi^{**}\cong c(\rho^{**}) q(\mathcal B_\pi)^{**}$, where $c(\rho^{**})$ is the support projection of $\rho^{**}$. Since $c(\rho^{**})\in I_\Gamma(C^*_\pi(\Gamma)/J)^{**}$, we may consider the map $$\sigma: \tilde{\mathcal{B}}_{\pi}\to I_\Gamma(C^*_\pi(\Gamma)/J)^{**};\ \ x\mapsto \tilde q(x)c(\rho^{**}).$$
	Since ker$(\rho^{**})$ 
	is $\Gamma$-invariant, $c(\rho^{**})$ commutes with $C^*_\pi(\Gamma)/J$. Hence the restriction of $\sigma$ to $C^*_\pi(\Gamma)$ is a $*$-homomorphism. On the other hand, the restriction of $\sigma$ to $\mathcal B_\pi$ is a $*$-isomorphism
	between $c(\rho^{**}) q(\mathcal B_\pi)$ and $\mathcal B_\pi$. Since $\tilde{\mathcal{B}}_{\pi}$ is the C*-algebra generated by $\mathcal{B}_{\pi}\cup \pi(\Gamma)$
	in $B(H_\pi)$, it follows that $\sigma$ is a $*$-homomorphism.
	Now by simplicity of $\tilde{\mathcal{B}}_{\pi}$, $\sigma$  is injective, hence $J=$ker$(q)=0$.\qed
\end{proof}

\begin{theorem}\label{tf}
	
If $\pi\preceq\lambda$ and	$\tilde{\mathcal{B}}_{\pi}$ is simple, then the action of  $\Gamma$ on ${\mathcal{B}}_{\pi}$ is topologically free. The converse also holds when $\Gamma$ is an exact group and $\mathcal{B}_{\pi}$ is C*-embeddable.
\end{theorem}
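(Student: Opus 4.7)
My plan is to follow the Kalantar--Kennedy strategy for C*-simplicity of $\Gamma$ (the case $\pi=\lambda$), using Theorem~\ref{emb} to pass between simplicity of $\tilde{\mathcal B}_\pi$ and of $C^*_\pi(\Gamma)$. Throughout, topological freeness is interpreted as the statement that $\mathrm{Fix}(s)=\{\omega\in\mathcal P(\mathcal B_\pi): s\cdot\omega=\omega\}$ has empty interior for each $s\neq e$.

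For the forward implication I argue by contrapositive. Suppose that, for some $s\neq e$, a nonempty open set $U\subseteq\mathcal P(\mathcal B_\pi)$ is fixed pointwise by $s$. Since $\pi\preceq\lambda$, Lemma~\ref{can} supplies the canonical trace $\tau_0$ annihilating $\pi_t$ for $t\neq e$, and composing with $\Gamma$-injectivity of $\mathcal B_\pi$ yields a faithful $\Gamma$-equivariant conditional expectation $E:\tilde{\mathcal B}_\pi\to\mathcal B_\pi$ with $E(\pi_t)=0$ for $t\neq e$ and the usual bimodule property over $\mathcal B_\pi$. Picking $f\in\mathcal B_\pi^{+}\setminus\{0\}$ whose spectral support lies in $U$, the element $x:=f\pi_s f$ satisfies $E(x)=fE(\pi_s)f=0$, but through a pure-state extension argument exploiting $s\cdot\omega=\omega$ on $U$ (so that $\omega$ lifts to a state on $\tilde{\mathcal B}_\pi$ sending $\pi_s$ to a scalar of modulus one), $x$ is nonzero. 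The closed two-sided ideal generated by $x$ is proper, because its image under $E$ sits inside the proper $\Gamma$-invariant ideal of $\mathcal B_\pi$ defined by vanishing on $U$, contradicting simplicity of $\tilde{\mathcal B}_\pi$.

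For the converse, assume $\Gamma$ is exact, $\mathcal B_\pi$ is C*-embeddable, and the $\Gamma$-action on $\mathcal B_\pi$ is topologically free. The result quoted in Section~\ref{bdry} gives topological amenability of the $\Gamma$-action on $\mathcal B_\pi$ (using $\pi\preceq\lambda$), while minimality is automatic from the boundary property of $\mathcal B_\pi$. The plan is to identify $\tilde{\mathcal B}_\pi$ with the reduced crossed product $\mathcal B_\pi\rtimes_r\Gamma$, after which the Archbold--Spielberg simplicity criterion applies: a reduced crossed product by a topologically free, minimal, topologically amenable action is simple. Simplicity of $C^*_\pi(\Gamma)$ then follows from Theorem~\ref{emb}.

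The principal obstacle is the identification $\tilde{\mathcal B}_\pi\cong\mathcal B_\pi\rtimes_r\Gamma$ in the converse direction, since $\tilde{\mathcal B}_\pi$ is defined spatially rather than universally; topological amenability of the action should collapse the full and reduced crossed products onto $\tilde{\mathcal B}_\pi$, but this requires verifying that the canonical covariance map is injective on the reduced level (equivalently, that $E$ is faithful on $\tilde{\mathcal B}_\pi$). A secondary subtlety in the forward direction is that $\mathcal B_\pi$ need not be commutative, so the production of $f$ with spectral support in the prescribed open subset of $\mathcal P(\mathcal B_\pi)$, and the nonvanishing of $f\pi_s f$, must be argued via the dual action on $\mathcal P(\mathcal B_\pi)$ together with the functional calculus in $\mathcal B_\pi$.
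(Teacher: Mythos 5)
Your forward direction has a genuine gap at the decisive step. Granting that a $\Gamma$-equivariant conditional expectation $E:\tilde{\mathcal B}_\pi\to\mathcal B_\pi$ with $E(\pi_t)=0$ for $t\neq e$ exists (it does: extend the canonical trace of Lemma \ref{can} by $\pi$-injectivity and invoke $\pi$-rigidity to see the restriction to $\mathcal B_\pi$ is the identity), the computation $E(f\pi_s f)=fE(\pi_s)f=0$ tells you nothing about the closed two-sided ideal generated by $x=f\pi_s f$: conditional expectations do not carry ideals to ideals, so ``the image of the ideal under $E$'' is not an ideal of $\mathcal B_\pi$, and in any case the set of elements vanishing on $U$ is not $\Gamma$-invariant, while by Proposition \ref{mp} the action on $\mathcal B_\pi$ is minimal, so $\mathcal B_\pi$ has \emph{no} proper $\Gamma$-invariant closed ideal at all. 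Nothing in your argument rules out that the ideal generated by $x$ is everything. This is not a repairable slip within your framework: if the properness of such an ideal followed this easily, topological freeness would be necessary for simplicity of every minimal crossed product, which is not how this implication is proved anywhere. The paper's proof of the forward direction is structurally different: for a state $\omega$ it forms the normal subgroup $\Lambda=\bigcap_s\Gamma_{s\omega}$, uses the fixed-vector subrepresentation $\pi^\Lambda$ and the corner $p^\Lambda\tilde{\mathcal B}_\pi p^\Lambda$ (simple as a corner of a simple algebra) to show that every closed ideal $I\unlhd\mathcal B_\pi\rtimes\Gamma$ with $I\cap\mathcal B_\pi=0$ lies in $\ker(\pi^\Lambda\rtimes\iota)$, then uses $\pi^\Lambda\preceq\mathrm{ind}_\Lambda^\Gamma(1_\Lambda)=\lambda$ to push $I$ into the kernel of the regular representation of the crossed product, and finally invokes Archbold--Spielberg's Theorem 2 (for which commutativity of $\mathcal B_\pi$, available since $\pi\preceq\lambda$, is needed) to conclude topological freeness. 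That last citation is doing the real work you are trying to replace with a bare-hands ideal construction.

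Your converse is essentially the paper's argument, but the ``principal obstacle'' you identify is not one. You do not need $\tilde{\mathcal B}_\pi\cong\mathcal B_\pi\rtimes_r\Gamma$: exactness gives amenability of the action, hence $\mathcal B_\pi\rtimes\Gamma=\mathcal B_\pi\rtimes_r\Gamma$, Archbold--Spielberg's Corollary 1 (with minimality from Proposition \ref{mp} and topological freeness) gives simplicity of $\mathcal B_\pi\rtimes_r\Gamma$, and C*-embeddability gives a $*$-epimorphism $\mathcal B_\pi\rtimes_r\Gamma\twoheadrightarrow\tilde{\mathcal B}_\pi$; a nonzero quotient of a simple C*-algebra is simple, so no injectivity or faithfulness of $E$ needs to be verified. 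As written, both halves of your proposal are incomplete, the forward half essentially so.
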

\begin{proof}
Let $\omega$ be a state of the C*-algebra ${\mathcal{B}}_{\pi}$ with Choi-Effros product. Let
$$\Gamma_\omega:=\{s\in\Gamma: s\omega=\omega\}$$
where $s\omega(x)=\omega(s^{-1}\cdot x)$, for $x\in {\mathcal{B}}_{\pi}$. Since $\Gamma$ acts by Ad$_\pi$ on ${\mathcal{B}}_{\pi}$ and the identity of the unital C*-algebra ${\mathcal{B}}_{\pi}$ is the identity operator on $H_\pi$, $s\omega$ is again a state. It follows from the properties of action that $\Gamma_\omega\leq\Gamma$ is a subgroup. Let $\Lambda:=\bigcap \Gamma_{s\omega}$, where $s$ runs over $\Gamma$. This is nonempty as it contains the identity of the group. Also, since $t\Gamma_\omega t^{-1}\subseteq \Gamma_{t\omega}$, $\Lambda$ is a normal subgroup of $\Gamma$. Let $\pi^\Lambda$ be the corresponding sub-representation of $\pi$ as described in the first paragraph of the current section and $p^\Lambda: H_\pi\to H_\pi^\Lambda$ be the corresponding orthogonal projection. Since $p^\Lambda$ commutes with the range of $\pi^\Lambda$, $(\iota, \pi^\Lambda, H_\pi^\Lambda)$ is a covariant pair, where $\iota$ is an embedding $\iota_0$ of $\mathcal{B}_{\pi}$  in $B(H_\pi)$  (which exists by assumption) composed with the canonical surjection 
$$B(H_\pi)\twoheadrightarrow B(H_\pi^\Lambda); \ x\mapsto p^\Lambda xp^\Lambda.$$
By universal property of the full crossed product, the integrated representation $\pi^\Lambda\rtimes \iota$ extends to a $*$-epimomorphism: $\mathcal{B}_{\pi}\rtimes \Gamma\twoheadrightarrow
p^\Lambda \tilde{\mathcal{B}}_{\pi}p^\Lambda$. If $I\unlhd \mathcal{B}_{\pi}\rtimes \Gamma$ is a closed ideal with $I\cap \mathcal{B}_{\pi}=\{0\}$, then $J:=\pi^\Lambda\rtimes \iota(I)$ is a closed ideal of $p^\Lambda \tilde{\mathcal{B}}_{\pi}p^\Lambda$. The latter C*-algebra is simple as a corner of a simple C*-algebra, thus $J=0$ or $p^\Lambda \tilde{\mathcal{B}}_{\pi}p^\Lambda$. But $\pi^\Lambda\rtimes \iota$ acts as identity on ${\mathcal{B}}_{\pi}$ and so the second option for $J$ cannot hold, that is, $J=0$, i.e., $I\subseteq \ker(\pi^\Lambda\rtimes \iota)$. 

On the other hand, 
$$\pi^\Lambda\preceq{\rm ind}_\Lambda^\Gamma(\pi^\Lambda|_\Lambda)={\rm ind}_\Lambda^\Gamma(1_\Lambda)=\lambda,$$ hence, $\pi^\Lambda\rtimes \iota\preceq\tilde\lambda$ for the canonical surjection $\tilde\lambda: \mathcal{B}_{\pi}\rtimes\Gamma\twoheadrightarrow\mathcal{B}_{\pi}\rtimes_r\Gamma$. Therefore, 
$$I\subseteq \ker(\pi^\Lambda\rtimes \iota)\subseteq \ker\tilde\lambda.$$
Summing up, we have shown that for each closed ideal $I\unlhd \mathcal{B}_{\pi}\rtimes \Gamma$  with $I\cap \mathcal{B}_{\pi}=\{0\}$,
we have  $I\subseteq \ker\tilde\lambda.$ Now since $\pi\preceq\lambda$, $\mathcal{B}_{\pi}$ is abelian by \cite[Theorem 3.21]{bek}, and so it follows from \cite[Theorem 2]{as} that the action of $\Gamma$ on ${\mathcal{B}}_{\pi}$ is topologically free.

For the converse, assume moreover that $\Gamma$ is an exact group and $\mathcal{B}_{\pi}$ is C*-embeddable. Then $\mathcal{B}_{\pi}\rtimes\Gamma=\mathcal{B}_{\pi}\rtimes_r\Gamma$ by \cite[Theorem 5.3]{d}. Since $\mathcal{B}_{\pi}$ is C*-embeddable,
there is a $*$-epimomorphism:  $\mathcal{B}_{\pi}\rtimes_r\Gamma \to \tilde{\mathcal{B}}_{\pi}$,  mapping $C^*_\lambda(\Gamma)$ to $C^*_\pi(\Gamma)$ canonically. In particular, $\pi\preceq\lambda$, and so $\mathcal{B}_{\pi}$ is abelian. Now it follows from \cite[Corollary 1]{as} that $\mathcal{B}_{\pi}\rtimes_r\Gamma$ is simple, and so is $\tilde{\mathcal{B}}_{\pi}$. \qed
\end{proof}

\begin{remark} $(i)$ In the proof of the above theorem, we constructed a normal $\Lambda$. It is worth noting that if $\Gamma/\Lambda$ is amenable, then the condition $\pi\preceq\lambda$ is automatic, as by \cite[1.F.21]{bd},
	$$\pi\preceq{\rm ind}_\Lambda^\Gamma(\pi^\Lambda|_\Lambda)={\rm ind}_\Lambda^\Gamma(1_\Lambda)=\lambda.$$
By the same argument, the condition is automatic if $\Lambda$ is coamenable (c.f., the proof of \cite[Theorem 3.28]{bek}). 
Also in the first part of the proof, the condition $\pi\preceq\lambda$ could be replaced by the apparently weaker (c.f., \cite[Theorem 3.21]{bek}) condition that  $\mathcal{B}_{\pi}$ is commutative.

$(ii)$ In the proof of the above lemma, we used the action of $\Gamma$ on the state space of  $\mathcal{B}_{\pi}$. Since the action preserves positivity, it sends pure states to pure states. In particular, the same argument could be phrased based on pure state space or equivalently on spectrum of $\mathcal{B}_{\pi}$ (the latter being compatible with the approach of  Archbold-Spielberg in defining freeness of such actions \cite{as}).

$(iii)$ If $s\in\Lambda$ and $x\in {\mathcal{B}}_{\pi}$, then by normality of $\Lambda$,  $t\omega(s\cdot x)=t\omega(x)$, for each state of the form $t\omega$ on ${\mathcal{B}}_{\pi}$. It follows from Proposition \ref{mp} that $s\cdot x=x$, i.e., $\Lambda$ acts trivially on ${\mathcal{B}}_{\pi}$, therefore $\Lambda$ is  $\pi$-amenable by \cite[Theorem 4.4]{bek}.

$(iv)$ Finally note that in the above proof, the restricted representation $\pi|_{\Gamma_\omega}$ is known to be an amenable representation of $\Gamma_\omega$ \cite[Lemma 3.14]{bek}. 
\end{remark}

\begin{lemma}
	
	If  $\mathcal{B}_{\pi}$ is C*-embeddable, then the action of  $\Gamma$ on ${\mathcal{B}}_{\pi}$ is topologically free if and only if it is free. 
\end{lemma}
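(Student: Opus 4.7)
The direction free $\Rightarrow$ topologically free is tautological: if $\mathrm{Fix}(s)=\emptyset$ for every $s\neq e$, then $\mathrm{Fix}(s)$ certainly has empty interior. The plan for the converse is to show that, under C*-embeddability, $\mathrm{Fix}(s)$ is already clopen in the relevant spectrum, so that topological freeness (empty interior) immediately upgrades to freeness (emptiness).

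Concretely, I would fix $s\in\Gamma\setminus\{e\}$ and consider the action of $s$ on the pure state space $\mathcal{P}(\mathcal{B}_{\pi})$ (or the spectrum of $\mathcal{B}_{\pi}$, in the Archbold-Spielberg sense indicated in Remark (ii) after Theorem \ref{tf}). The key structural input is that $\mathcal{B}_{\pi}$ is constructed as the range of a minimal u.c.p.\ $\Gamma$-projection $\phi_0$ on $B(H_\pi)$, which makes it injective as an operator system; with the Choi-Effros product it is an injective C*-algebra, hence monotone complete, hence an AW*-algebra. Under the hypothesis of C*-embeddability, the Choi-Effros product coincides with the ambient product in $B(H_\pi)$, and the relevant spectrum is extremely disconnected.

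At this point I would apply Frol\'ik's theorem exactly as in Lemma \ref{fix}: the homeomorphism of an extremely disconnected compact Hausdorff space induced by $s\in\Gamma$ has clopen fixed-point set. Thus $\mathrm{Fix}(s)$ is clopen. By topological freeness, $\mathrm{Fix}(s)$ has empty interior; but a clopen subset of a Hausdorff space with empty interior must be empty. Hence $\mathrm{Fix}(s)=\emptyset$ for every non-identity $s$, proving freeness.

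The main obstacle is the AW*/Stonean step: under only C*-embeddability one must still confirm that $\mathcal{B}_{\pi}$ is an AW*-algebra (equivalently, that its injective envelope structure survives the passage to a C*-subalgebra of $B(H_\pi)$), so that Frol\'ik's theorem applies without imposing the extra AW*-hypothesis used explicitly in Lemma \ref{fix}. Once this structural point is settled --- using the fact that the minimality of $\phi_0$ and the $\pi$-injectivity property $(v)$ equip $\mathcal{B}_{\pi}$ with the required completeness --- the proof reduces to the clopen/empty-interior dichotomy and nothing further is needed.
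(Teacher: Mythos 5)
Your proposal is correct and follows essentially the same route as the paper: both arguments use the surjective u.c.p.\ projection $B(H_\pi)\to\mathcal{B}_{\pi}$ to conclude that $\mathcal{B}_{\pi}$ is an injective C*-algebra, hence an AW*-algebra with extremely disconnected spectrum, and then deduce that each fixed-point set is clopen, so that empty interior forces emptiness. The only cosmetic difference is that you invoke Frol\'{i}k's theorem directly, whereas the paper delegates that final step to \cite[Lemma 3.4]{bkko}, which is itself proved by the same Frol\'{i}k argument.
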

\begin{proof}
If  $\mathcal{B}_{\pi}$ is C*-embeddable, then since by construction there is a surjective c.p. projection: $B(H_\pi)\to \mathcal{B}_{\pi}$, it follows that $\mathcal{B}_{\pi}$ is an injective C*-algebra, and so an $AW^*$-algebra \cite[IV.2.1.7]{bl}. In particular, its spectrum is extremely disconnected. Now the action of  $\Gamma$ on ${\mathcal{B}}_{\pi}$ is topologically free if and only if the same holds for the action of  $\Gamma$ on the spectrum of ${\mathcal{B}}_{\pi}$ \cite[Definition 1]{as} and we use the same convention for free actions. Now the result follows from \cite[Lemma 3.4]{bkko}.	\qed
\end{proof}

We use \cite[Definition 6.1]{bkko} (see also, \cite[Definition 1.1]{bfs}).

\begin{definition}
A subgroup $\Lambda\leq\Gamma$ is called normalish if for every $n\geq 1$ and $t_1,\cdots,t_n\in\Gamma$, the intersection $t_1\Lambda t^{-1}_1\cap\cdots \cap t_n\Lambda t^{-1}_n$ is an infinite set. 
\end{definition}

In the proof of Theorem \ref{tf}, we constructed subgroups $\Gamma_\omega$, for a (pure) state $\omega$   of the unital C*-algebra ${\mathcal{B}}_{\pi}$ (with Choi-Effros product). In the case of the left regular representation, among these are the stabilizer subgroups associated to the action of $\Gamma$ on the Furstenberg boundary $\partial_F\Gamma$, which are known to be amenable \cite[Lemma 2.7]{bkko}. Here we need to work with $\pi$-amenable subgroups, and the subgroups $\Gamma_\omega$ are not necessarily $\pi$-amenable. However, if the restricted representation $\pi|_{\Gamma_\omega}$ is amenable in the sense of Bekka \cite{b}, then $\Gamma_\omega$ is $\pi$-amenable by \cite[Proposition 4.2]{bek}. 

\begin{definition} \label{pmf} 
	The action of $\Gamma$ on a (unital) C*-algebra $A$ is {\it proximal} if the corresponding action on the state space $\mathcal S(A)$ is proximal, i.e., given states $\omega_1,\omega_2\in\mathcal S(A)$, there is a net $(t_i)$ in $\Gamma$ with $\lim_i t_i\omega_1= \lim_i t_i\omega_2$. It is {\it minimal} if for each $\omega\in \mathcal P(A)$ in the pure state space, the orbit $\{s\omega: s\in\Gamma\}$ is dense in $\mathcal P(A)$. Finally, it is {\it topologically free} if the set $\{\omega\in\mathcal P(A): s\omega\neq \omega\}$ is weak$^*$-dense in $\mathcal P(A)$. 
\end{definition}

\begin{remark} \label{tf2} 
	$(i)$ When $A=C_0(X)$ is commutative, $\mathcal S(A)=P(X)$ is the set of probability measures on $X$, and a given  action of $\Gamma$ on $X$ is strongly proximal in the sense of Furstenberg \cite{f2} iff the induced action on $C_0(X)$ is proximal in the above sense and it is topologically transitive iff the induced action on $C_0(X)$ is minimal in the above sense. 
	
	$(ii)$ One might wonder if using state space $\mathcal S(A)$ or pure state space $\mathcal P(A)$ instead of spectrum $\hat A$ in the above definition (as well as in the proof of Theorem \ref{tf}) is a deviance from the approach of Archbold-Spielberg in defining (topological) freeness in terms of the action on the spectrum \cite[Definition 1]{as} (rather than the state space). However, when $A$ is unital, for $s\in \Gamma$ if ${\rm Fix}(s):=\{\sigma\in \hat A: s\sigma=\sigma\}$ and $\mathcal P^s(A):=\{\omega\in \mathcal P(A): s\omega=\omega\}$ are the corresponding fix-spaces, then the canonical  map: $\mathcal P(A)\to \hat A$  maps  $\mathcal P^s(A)$ onto ${\rm Fix}(s)$, since if $\omega\in \mathcal P(A)$ and $\sigma:=\pi_\omega\in \hat A$ is the corresponding GNS-representation with cyclic vector $\xi_\omega$, then
	$$\langle \pi_{s\omega}(a)\xi_{s\omega}, \xi_{s\omega}\rangle=s\omega(a)=\omega(s^{-1}\cdot a)=\langle \pi_{\omega}(s^{-1}\cdot a)\xi_{\omega}, \xi_{\omega}\rangle=\langle s\pi_{\omega}(a)\xi_{\omega}, \xi_{\omega}\rangle,$$
	for each $a\in A$, and so $\pi_{s\omega}=s\pi_{\omega}$ in $\hat A$ by  \cite[2.4.1]{di}. Now since the above canonical map is continuous open \cite[3.4.11]{di}, ${\rm Fix}(s)$ has nonempty interior in $\hat A$ iff $\mathcal P^s(A)$ has nonempty interior in $\mathcal P(A)$. This means that the topological freeness in the above sense is equivalent to topological freeness in the sense of \cite[Definition 1]{as}, at least when $\hat A$ is Hausdorff (see, \cite[Remark$(i)$]{as}).
	
	$(iii)$ Recall that  an automorphism $\alpha$ of a C*-algebra A is called {\it properly outer} if for any
	non-zero invariant ideal $I$ of $A$ and any inner automorphism $\beta$ of $I$, $\|\alpha|_J-\beta\|= 2$ \cite{e}. An action $\alpha$ of $\Gamma$ on $A$ is properly outer if $\alpha_t$ is properly outer, for each $t\neq e$.  If $\Gamma\curvearrowright A$ is topologically
	free then it is properly outer:  Suppose on the contrary that $\alpha_s$ is not properly outer for some $s\neq e$. Let $I$ be a non zero ideal of $A$
	invariant under $\alpha_s$, then $s\sigma=\sigma$, for each $\sigma\in \hat I$ \cite[Proposition 1]{as}. Regard $\mathcal P(I)$ as an open (non-empty) subset of $\mathcal P(A)$ \cite[2.11.8]{di}, and observe that $\mathcal P(I)\subseteq {\rm Fix}(s)$, by $(ii)$. In particular,  $\mathcal P^s(A)$ has non-empty interior in $\mathcal P(A)$, thus   $\Gamma\curvearrowright A$ is not topologically
	free (note that in this argument we do not require $\hat A$ to be Hausdorff). 
\end{remark}

\begin{proposition} \label{mp}
The action of $\Gamma$ on $\mathcal{B}_{\pi}$ is minimal.  If there is a $\Gamma$-embedding:  $\ell^\infty(\Gamma)\hookrightarrow B(H_\pi)$, then this action is also proximal.	
\end{proposition}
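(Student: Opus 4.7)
The plan for the minimality assertion is as follows. Fix a pure state $\omega \in \mathcal{P}(\mathcal{B}_\pi)$ and let $X := \overline{\Gamma\omega}$ be its weak*-closed orbit inside $\mathcal{P}(\mathcal{B}_\pi)$. The orbital u.c.p.\ $\Gamma$-equivariant map $T_\omega : \mathcal{B}_\pi \to C(X)$, defined by $a \mapsto (\nu \mapsto \nu(a))$, is the main tool; the goal is to show it is isometric. I would do this by extending $T_\omega$ through a $\Gamma$-equivariant embedding of $C(X)$ into an ambient $B(K)$ (using $\Gamma$-injectivity and the $\pi$-injectivity property (v) to pull back into $B(H_\pi)$), then applying $\phi_0$ to obtain a $\Gamma$-endomorphism of $\mathcal{B}_\pi$. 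The $\pi$-rigidity (property (iv)) forces this endomorphism to be the identity, which in turn forces $T_\omega$ to be isometric. In the principal case $\pi\preceq\lambda$ the algebra $\mathcal{B}_\pi$ is commutative, so injectivity of $T_\omega$ means the evaluation maps at points of $X$ separate elements of $C(\mathcal{P}(\mathcal{B}_\pi)) \cong \mathcal{B}_\pi$, whence $X = \mathcal{P}(\mathcal{B}_\pi)$.

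For proximality, assume a $\Gamma$-embedding $\iota : \ell^\infty(\Gamma) \hookrightarrow B(H_\pi)$. Since $\ell^\infty(\Gamma)$ is $\Gamma$-injective, applying $\Gamma$-injectivity to the inclusion $\mathbb{C} \hookrightarrow C(\partial_F\Gamma) = I_\Gamma(\mathbb{C})$ produces a $\Gamma$-equivariant embedding $C(\partial_F\Gamma) \hookrightarrow \ell^\infty(\Gamma)$, and composing with $\iota$ gives $C(\partial_F\Gamma) \hookrightarrow B(H_\pi)$ $\Gamma$-equivariantly. Further composing with the minimal $\Gamma$-projection $\phi_0 : B(H_\pi) \to \mathcal{B}_\pi$ yields a u.c.p.\ $\Gamma$-map $\sigma : C(\partial_F\Gamma) \to \mathcal{B}_\pi$, whose adjoint $\sigma^* : \mathcal{S}(\mathcal{B}_\pi) \to P(\partial_F\Gamma)$ is continuous and $\Gamma$-equivariant. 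Given $\omega_1,\omega_2 \in \mathcal{S}(\mathcal{B}_\pi)$, strong proximality of $\Gamma \curvearrowright \partial_F\Gamma$ furnishes a net $(t_i) \subseteq \Gamma$ such that $t_i\sigma^*(\omega_j) \to \delta_y$ for both $j = 1,2$ and some $y \in \partial_F\Gamma$. After passing to a weak*-convergent subnet one obtains limits $\tau_1,\tau_2 \in \mathcal{S}(\mathcal{B}_\pi)$ of $t_i\omega_j$ with $\sigma^*(\tau_1) = \sigma^*(\tau_2) = \delta_y$, a character on $C(\partial_F\Gamma)$. By Choi's multiplicative-domain theorem, each $\tau_j$ is multiplicative on the range of $\sigma$; combined with minimality (from the first part) and the fact that the $\Gamma$-orbit of $\sigma(C(\partial_F\Gamma))$ is weak*-dense in the state-separating data on $\mathcal{B}_\pi$, this identifies $\tau_1 = \tau_2$, establishing proximality.

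The main obstacle in both halves is the promotion of a u.c.p.\ $\Gamma$-map either to an endomorphism of $\mathcal{B}_\pi$ or to a state-determining map on a $\Gamma$-invariant subsystem. In the minimality argument, the construction of a $\Gamma$-equivariant section $C(X) \to B(H_\pi)$ is delicate and must be threaded through property (v) to keep all maps $\Gamma$-equivariant. In the proximality argument, the final identification $\tau_1 = \tau_2$ requires the range of $\sigma$ to be rich enough to determine states on $\mathcal{B}_\pi$, which rests on combining the minimality of $\phi_0$ (property (iii)) with the multiplicative-domain consequences of $\sigma^*(\tau_j)$ being an extreme point of $P(\partial_F\Gamma)$.
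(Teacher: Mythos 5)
There are genuine gaps in both halves of your argument, and in each case the paper takes a different (and working) route.

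For minimality, the crux is to produce a $\Gamma$-map from $\mathcal{B}_{\pi}$ into $B(H_\pi)$ itself, so that $\pi$-essentiality (property $(ii)$ of $\phi_0$) forces it to be isometric. Your map $T_\omega:\mathcal{B}_\pi\to C(X)$ lands in an external $C^*$-algebra, and your plan to ``pull back into $B(H_\pi)$'' via property $(v)$ does not work: $\pi$-injectivity only extends $\Gamma$-maps already defined on $\Gamma$-invariant subspaces \emph{of} $B(H_\pi)$ with values in $\mathrm{Im}(\phi_0)$; it gives no $\Gamma$-map from $C(X)$ or an ambient $B(K)$ into $B(H_\pi)$, since $B(H_\pi)$ is not $\Gamma$-injective. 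The paper circumvents this by working with the sub-representation $\pi^\Lambda$ on the $\Gamma$-invariant closed subspace $H_\pi^\Lambda\subseteq H_\pi$: since $B(H_\pi^\Lambda)$ $\Gamma$-embeds in $B(H_\pi)$ by construction, the induced $\Gamma$-map $\mathcal{B}_{\pi}\to\mathcal{B}_{\pi^\Lambda}$ really does land in $B(H_\pi)$, essentiality makes it isometric, and the ideal $\ker(F)$ vanishes. Note also that your argument only covers the commutative case ($\pi\preceq\lambda$), whereas the proposition is stated for arbitrary $\pi$ and the paper's argument, phrased via the hull--kernel correspondence on $\hat{\mathcal{B}}_\pi$, does not need commutativity.

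For proximality, the fatal step is the last one: from $\sigma^*(\tau_1)=\sigma^*(\tau_2)=\delta_y$ you only learn that $\tau_1$ and $\tau_2$ agree on the range of $\sigma:C(\partial_F\Gamma)\to\mathcal{B}_\pi$, and there is no reason this range separates the states of $\mathcal{B}_\pi$ (it is already $\Gamma$-invariant, so taking $\Gamma$-translates adds nothing, and the multiplicative-domain observation does not bridge the gap). You are using the hypothesis in the wrong direction: mapping $C(\partial_F\Gamma)$ \emph{into} $B(H_\pi)$ and compressing to $\mathcal{B}_\pi$ loses information. The paper instead uses the Poisson map of an arbitrary state $\varrho$, namely $x\mapsto(s\mapsto\langle s\varrho,x\rangle)$, as a $\Gamma$-map $\mathcal{B}_\pi\to\ell^\infty(\Gamma)\hookrightarrow B(H_\pi)$; $\pi$-essentiality makes it isometric, which contradicts any Hahn--Banach separation of a pure state $\omega$ from $\overline{\mathrm{conv}}(\Gamma\varrho)$. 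Hence $\mathcal{P}(\mathcal{B}_\pi)\subseteq\overline{\mathrm{conv}}(\Gamma\varrho)$, Milman's theorem puts $\omega$ in the weak$^*$-closure of $\Gamma\varrho$ itself, and the standard trick of pushing $\tfrac12(\varrho_1+\varrho_2)$ to the extreme point $\omega$ and passing to subnets yields $\lim_i t_i\varrho_1=\lim_i t_i\varrho_2$. I would suggest reorganizing your proof around these two devices: $\Gamma$-maps into $B(H_\pi)$ obtained from invariant subspaces of $H_\pi$ (for minimality) and from the Poisson map into $\ell^\infty(\Gamma)$ (for proximality), with $\pi$-essentiality as the engine in both cases.
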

\begin{proof}
Let $F$ be the closure in $\hat{ \mathcal{B}}_{\pi}$  of the set $\{\pi_{s\omega}: s\in \Gamma\}$ consisting of GNS-constructions of pure states in the orbit of $\omega$, and let  $I:=\ker(F)$ be the corresponding closed ideal of $\mathcal{B}_{\pi}$. For the normal subgroup $\Lambda$ as in the proof of Theorem \ref{tf}, we have a sub-representation $\pi^\Lambda\leq \pi$, for which we get a  $\Gamma$-map: $\mathcal{B}_{\pi}\to\mathcal{B}_{\pi^\Lambda}$ \cite[Proposition 3.18]{bek}, with kernel $I$. Since $H_{\pi^\Lambda}$ is a $\Gamma$-invariant closed subspace of $H_\pi$, there is a $\Gamma$-embedding: $\mathcal{B}_{\pi^\Lambda}\hookrightarrow B(H_\pi)$, i.e., the above $\Gamma$-map: $\mathcal{B}_{\pi}\to\mathcal{B}_{\pi^\Lambda}$ could be regarded as a $\Gamma$-map into $B(H_\pi)$,  which is then isometric by $\pi$-essentiality \cite[Proposition 3.4]{bek}, and so $I=\ker(F)=0$, thus $F= \hat{ \mathcal{B}}_{\pi}$. It now follows that $\{s\omega: s\in\Gamma\}$ is dense in $\mathcal P(\mathcal{B}_{\pi})$.

Given $\varrho\in \mathcal S(\mathcal{B}_{\pi})$, let $K$ be the weak$^{*}$-closed convex hull of the set $\{s\varrho: s\in\Gamma\}$. We claim that $\mathcal P(\mathcal{B}_{\pi})\subseteq K$. Otherwise,  choosing pure state $\omega\notin K$, by Hahn-Banach, one may choose $x\in \mathcal{B}_{\pi}$ and $\varepsilon>0$ with
$$\langle s\varrho, x\rangle\leq  \langle \omega, x\rangle-\varepsilon\leq\|x\|-\varepsilon\ \ (s\in \Gamma).$$
Since $\ell^\infty(\Gamma)$ $\Gamma$-embeds in $B(H_\pi)$, the  $\Gamma$ map: $$\mathcal{B}_{\pi}\to \ell^\infty(\Gamma);\ \ x\mapsto(s\mapsto\langle s\varrho, x\rangle)$$ could be regarded as a $\Gamma$-map into $B(H_\pi)$, and so is isometric by $\pi$-essentiality, contradicting the above inequalities, proving the claim. Now by Milman's theorem \cite{m}, $\omega$ is in the weak$^{*}$-closure of the set $\{s\varrho: s\in\Gamma\}$. Next, given states $\varrho_1,\varrho_2\in\mathcal S(\mathcal{B}_{\pi})$, there is a net $(t_i)$ in $\Gamma$ with $\lim_i t_i(\frac{1}{2}(\varrho_1+ \varrho_2))=\omega$. Since  $\mathcal{B}_{\pi}$ is unital, passing to subnets, we may assume that the nets $t_i\varrho_k$ converge to states $\omega_k$, for $k=1,2$. Thus, $\omega=\frac{1}{2}(\omega_1+ \omega_2)$, and by extremality, $\omega=\omega_1=\omega_2$, that is,  $\lim_i t_i\varrho_1=\lim_i t_i \varrho_2$.     \qed	
\end{proof}

In the next two results, $\Gamma_\omega$ is the stabilizer subgroup of a state $\omega\in\mathcal S(\mathcal{B}_{\pi})$ as  in the proof of Theorem \ref{tf}.

\begin{lemma} \label{nish}
	 If $\Gamma$ has no non-trivial finite normal subgroup and the action of $\Gamma$ on $\mathcal{B}_{\pi}$ is not topologically free, then the subgroups $\Gamma_\omega$ are normalish for each pure state $\omega\in\mathcal P(\mathcal{B}_{\pi})$.
\end{lemma}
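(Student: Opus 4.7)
The proof plan relies on a dichotomy based on the $\pi$-amenable radical $\mathrm{Rad}_\pi(\Gamma)$. If $\mathrm{Rad}_\pi(\Gamma)\neq\{e\}$, then, being a normal subgroup of $\Gamma$ which has no non-trivial finite normal subgroup, it must be infinite. Since every element of $\mathrm{Rad}_\pi(\Gamma)$ acts trivially on $\mathcal B_\pi$ we have $\mathrm{Rad}_\pi(\Gamma)\subseteq \Gamma_\omega$ for every pure state $\omega$, and normality then gives $\mathrm{Rad}_\pi(\Gamma)=t_i\,\mathrm{Rad}_\pi(\Gamma)\,t_i^{-1}\subseteq t_i\Gamma_\omega t_i^{-1}$ for each $i$. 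Hence $\mathrm{Rad}_\pi(\Gamma)\subseteq\bigcap_{i=1}^n t_i\Gamma_\omega t_i^{-1}$ and the intersection is automatically infinite, so normalish holds. I may therefore assume $\mathrm{Rad}_\pi(\Gamma)=\{e\}$, i.e.\ $\Gamma\curvearrowright\mathcal B_\pi$ is faithful.

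Under this standing assumption, the failure of topological freeness produces $s\in\Gamma\setminus\{e\}$ whose fix-set $\mathcal P^s(\mathcal B_\pi)$ has non-empty interior $V$. Faithfulness forces $s$ to act non-trivially on $\mathcal B_\pi$, so by the observation of Section \ref{bdry} that elements with finite conjugacy class act trivially (\cite[Proposition 3.17]{bek}), the conjugacy class of $s$ in $\Gamma$ must be infinite. Fixing a tuple $t_1,\ldots,t_n\in\Gamma$ and writing $\omega_i=t_i\omega$, the normalish condition reduces to exhibiting infinitely many elements in $\bigcap_{i=1}^n\Gamma_{\omega_i}$. The plan is then: for any $g\in\Gamma$ with $g\omega_i\in V$ for every $i$ simultaneously, the conjugate $g^{-1}sg$ fixes each $\omega_i$ and therefore lies in the intersection; infinitely many such $g$'s yielding distinct conjugates of $s$ will do.

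To produce these $g$'s I would apply strong proximality of $\Gamma\curvearrowright\mathcal P(\mathcal B_\pi)$ to the discrete probability measure $\mu=\tfrac1n\sum_i\delta_{\omega_i}$: extract a net $(g_\alpha)\subseteq\Gamma$ with $g_\alpha\mu$ weak$^*$-converging to a point mass $\delta_y$, and then by the minimality of Proposition \ref{mp} translate so that $y\in V$; for large $\alpha$ this forces $g_\alpha\omega_i\in V$ for every $i$, and hence $h_\alpha:=g_\alpha^{-1}sg_\alpha\in\bigcap_i\Gamma_{\omega_i}$. Varying the target $y$ inside $V$ and invoking the infinitude of the conjugacy class of $s$ produces infinitely many distinct $h_\alpha$ in the intersection, giving normalish.

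The main obstacle I anticipate is precisely the strong-proximality step, since Proposition \ref{mp} only guarantees (ordinary) proximality, and only under the additional hypothesis of a $\Gamma$-embedding $\ell^\infty(\Gamma)\hookrightarrow B(H_\pi)$. Overcoming this will either require upgrading proximality to strong proximality using the $\pi$-essentiality of $\mathcal B_\pi\subseteq B(H_\pi)$ (applied to the obvious $\Gamma$-map built from $\mu$, as in the second part of the proof of Proposition \ref{mp}), or replacing the measure-theoretic step by a direct argument on the normal subgroup $\Lambda=\bigcap_{g\in\Gamma}\Gamma_{g\omega}$ appearing in the proof of Theorem \ref{tf}, whose triviality (forced by $\mathrm{Rad}_\pi(\Gamma)=\{e\}$) combined with the no-finite-normal-subgroup hypothesis should supply the required infinitude in the intersection.
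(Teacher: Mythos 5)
Your production of a single non-trivial element of $\bigcap_{i} t_i\Gamma_\omega t_i^{-1}$ is sound and matches the paper: strong proximality (obtained, as you correctly anticipate, from the second part of the proof of Proposition \ref{mp} applied to the barycentre $\frac{1}{n}\sum_i\omega_i$ together with extremality of pure states) pushes all the $\omega_i=t_i\omega$ simultaneously into the interior of ${\rm Fix}(s)$, and the resulting conjugate $g^{-1}sg$ then lies in the intersection. The gap is in the upgrade from ``non-trivial'' to ``infinite''. Every conjugate $g^{-1}sg$ you manufacture lies, by construction, inside the very set $\bigcap_i t_i\Gamma_\omega t_i^{-1}$ whose infinitude you are trying to establish; if that set happened to be finite, then no matter how you vary the target $y\in V$ you could only ever produce finitely many distinct such conjugates. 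The infinitude of the full conjugacy class of $s$ in $\Gamma$ does not help, because the admissible $g$'s (those with $g\omega_i\in V$ for all $i$) are constrained, and nothing prevents them all from conjugating $s$ into the same handful of elements. So the main line of your argument does not close.

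The paper closes it differently, following \cite[Theorem 6.2]{bkko}: one shows the intersection is non-trivial for \emph{every} finite tuple $t_1,\dots,t_n$, and then argues that if it were finite for some tuple, the finite intersection property inside that finite set would force the normal core $\bigcap_{t\in\Gamma}t\Gamma_\omega t^{-1}$ to be a non-trivial finite normal subgroup of $\Gamma$, contradicting the hypothesis. Note that this is the only place where the ``no non-trivial finite normal subgroup'' assumption enters; your main branch never uses it, which is a symptom of the missing step. Your final sentence does gesture at exactly this normal core, but triviality of the core alone does not yield infinitude of the finite-tuple intersections --- the finite-intersection-property argument is still needed --- so as written the proposal is incomplete. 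Your preliminary dichotomy on ${\rm Rad}_\pi(\Gamma)$ is correct but becomes unnecessary once the argument is repaired along these lines.
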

\begin{proof}
	Since the action of $\Gamma$ on $\mathcal{B}_{\pi}$ is not topologically free, by Remark \ref{tf2}, there is $s\neq e$ such that ${\rm Fix}(s)$ has non-empty interior in $\mathcal P(\mathcal{B}_{\pi})$.	By strong proximality any finite set  states $\{\omega_1,...,\omega_n\}$ in $\mathcal S(\mathcal{B}_{\pi})$ can be mapped into the
	interior of $Fix(s)$ by some $t_0\in \Gamma$, that is $(t_0^{-1}st_0)\omega_i=\omega_i$, for $1\leq i\leq n$. Now given $\omega\in\mathcal S(\mathcal{B}_{\pi})$ and a finite subset $\{t_1,...,t_n\}$ of $\Gamma$, the above observation applied to
	$\omega_i := t_i\omega$ shows that the intersection $t_1\Gamma_\omega t^{-1}_1\cap\cdots \cap t_n\Gamma_\omega t^{-1}_n$ is non-trivial. Arguing as in the proof of \cite[Theorem 6.2]{bkko}, if this is finite for some
	choice of $t_i$’s, but non-trivial for all choices of $t_i$’s, then
	$\bigcap_{t\in G} t\Gamma_\omega t^{-1}$ is a non-trivial finite
	normal subgroup of $\Gamma$, which cannot exist by assumption. Hence $\Gamma_\omega$ is normalish as claimed. \qed	
\end{proof}

The next result, which follows immediately from Lemma \ref{nish}, extends \cite[Theorem 6.2]{bkko}.

\begin{corollary} \label{nish2}
	Assume that  $\Gamma_\omega$ is $\pi$-amenable for some pure state $\omega\in\mathcal P(\mathcal{B}_{\pi})$. If $\Gamma$ has no non-trivial finite normal subgroup and no
	$\pi$-amenable normalish subgroup, then  the action of $\Gamma$ on $\mathcal{B}_{\pi}$ is  topologically free.
\end{corollary}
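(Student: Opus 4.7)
The plan is to argue by direct contradiction, invoking Lemma \ref{nish} as the main engine. Suppose, toward contradiction, that the action of $\Gamma$ on $\mathcal{B}_{\pi}$ is \emph{not} topologically free. Since by hypothesis $\Gamma$ has no non-trivial finite normal subgroup, Lemma \ref{nish} applies and yields that $\Gamma_\omega$ is normalish for every pure state $\omega\in\mathcal P(\mathcal{B}_{\pi})$.

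Now pick the particular pure state $\omega_0\in\mathcal P(\mathcal{B}_{\pi})$ whose stabilizer $\Gamma_{\omega_0}$ is $\pi$-amenable, as guaranteed by the corollary's hypothesis. Applying the first paragraph to $\omega_0$, we find that $\Gamma_{\omega_0}$ is simultaneously $\pi$-amenable and normalish, which contradicts the standing assumption that $\Gamma$ contains no $\pi$-amenable normalish subgroup. Therefore the action of $\Gamma$ on $\mathcal{B}_{\pi}$ must be topologically free.

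I expect no real obstacle here: once Lemma \ref{nish} is in hand, the corollary is essentially a one-line deduction, and indeed the statement itself advertises that it ``follows immediately'' from that lemma. The only subtlety worth flagging is that the hypothesis supplies a $\pi$-amenable stabilizer at a \emph{single} pure state, while Lemma \ref{nish} produces normalish stabilizers at \emph{all} pure states; since the coincidence of both properties at one $\omega_0$ already yields the forbidden subgroup, this asymmetry causes no trouble. All substantive work — verifying that failure of topological freeness forces infinite intersections of conjugates of $\Gamma_\omega$ via the strong proximality supplied by Proposition \ref{mp} — has been absorbed into Lemma \ref{nish}.
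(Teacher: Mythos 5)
Your argument is exactly the paper's intended one: the paper offers no separate proof beyond remarking that the corollary ``follows immediately from Lemma \ref{nish}'', and your contrapositive deduction — non-topological-freeness forces $\Gamma_{\omega_0}$ to be normalish, which together with its assumed $\pi$-amenability contradicts the hypothesis — is precisely that immediate deduction. The proposal is correct and matches the paper's approach.
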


Let $\Gamma$ act on a unital C*-algebra $A$. We say that $A$ has only {\it countably many stabilizers} if the set $\{\Gamma_\omega: \omega\in \mathcal{P}(A)\}$ of stabilizer subgroups is at most countable. We adapt the proof of \cite[Theorem 6.12]{bkko} to get the following result. 

\begin{corollary} \label{nish3}
	Assume that $\mathcal{B}_{\pi}$ has only countably many stabilizers. If $\Gamma$ has no non-trivial finite normal subgroup and the action of $\Gamma$ on $\mathcal{B}_{\pi}$ is faithful then it is also topologically free.
\end{corollary}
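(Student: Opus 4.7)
The plan is to argue by contradiction, adapting the proof of \cite[Theorem 6.12]{bkko} to the present noncommutative setting. Suppose the action of $\Gamma$ on $\mathcal{B}_{\pi}$ is faithful but not topologically free. By Lemma \ref{nish} (invoked under the no non-trivial finite normal subgroup hypothesis), every stabilizer $\Gamma_\omega$ for $\omega \in \mathcal{P}(\mathcal{B}_\pi)$ is normalish, and in particular infinite.

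Let $\{H_i\}_{i \in I}$ denote the at-most-countable list of distinct stabilizers. The $\Gamma$-action by conjugation permutes $\{H_i\}$, partitioning it into orbits $\{\mathcal{O}_k\}_k$. Setting $\Omega_k := \{\omega \in \mathcal{P}(\mathcal{B}_\pi) : \Gamma_\omega \in \mathcal{O}_k\}$ and $N_k := \bigcap_{H \in \mathcal{O}_k} H$, one checks that each $\Omega_k$ is $\Gamma$-invariant, each $N_k$ is a normal subgroup of $\Gamma$, and $N_k$ fixes $\Omega_k$ pointwise (since $N_k \subseteq \Gamma_\omega$ whenever $\Gamma_\omega \in \mathcal{O}_k$). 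Applying the Baire category theorem to the countable partition $\mathcal{P}(\mathcal{B}_\pi) = \bigsqcup_k \Omega_k$, some $\overline{\Omega_{k^*}}$ has non-empty interior; minimality of the $\Gamma$-action (Proposition \ref{mp}), together with $\Gamma$-invariance of the closure, forces $\overline{\Omega_{k^*}} = \mathcal{P}(\mathcal{B}_\pi)$. Since each $\mathrm{Fix}(t)$ is weak-$*$ closed and pure states separate elements of the $C^*$-algebra $\mathcal{B}_\pi$, $N_{k^*}$ must act trivially on $\mathcal{B}_\pi$; faithfulness of the action then gives $N_{k^*} = \{e\}$.

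The main obstacle is extracting the final contradiction from $N_{k^*} = \{e\}$. In the easy case where the orbit $\mathcal{O}_{k^*}$ is finite, $N_{k^*}$ is a finite intersection of conjugates of a normalish subgroup, hence infinite by definition of normalish, which directly contradicts $N_{k^*} = \{e\}$. The more delicate case of a countably infinite orbit requires iterating the Baire-plus-minimality reduction inside the countable family of conjugates of a fixed representative $H_{i^*} \in \mathcal{O}_{k^*}$: by successively passing to comeager $\Gamma$-invariant strata of $\mathcal{P}(\mathcal{B}_\pi)$ whose generic stabilizers contain ever smaller sub-intersections of those conjugates, one arrives after finitely many steps at a finite sub-intersection that is simultaneously trivial, by the same faithfulness-plus-minimality argument applied to the relevant stratum, and infinite, by the normalish property of $H_{i^*}$. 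This closes the contradiction and completes the proof.
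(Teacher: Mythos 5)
Your reduction to the triviality of $N_{k^*}$ is fine as far as it goes (indeed, by minimality every nonempty closed $\Gamma$-invariant subset of $\mathcal P(\mathcal{B}_{\pi})$ is already all of $\mathcal P(\mathcal{B}_{\pi})$, so the Baire step is not even needed there: every nonempty $\overline{\Omega_k}$ is the whole space and every $N_k$ is trivial). But this is precisely why the framework delivers nothing: $N_{k^*}$ is an intersection over a possibly infinite conjugation orbit, and the definition of normalish only controls \emph{finite} intersections of conjugates. An infinite intersection of conjugates of a normalish subgroup can perfectly well be trivial, so $N_{k^*}=\{e\}$ is not in tension with Lemma \ref{nish}. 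Your treatment of the infinite-orbit case --- ``successively passing to comeager $\Gamma$-invariant strata \dots\ one arrives after finitely many steps at a finite sub-intersection that is simultaneously trivial and infinite'' --- is not an argument: no mechanism is given that terminates in finitely many steps, nor one that forces a \emph{finite} sub-intersection of conjugates to act trivially on all of $\mathcal P(\mathcal{B}_{\pi})$. That is the genuine gap.

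The paper closes it by arranging, before conjugating at all, that a single stabilizer fixes a nonempty \emph{open} set pointwise. Starting from $s\neq e$ with ${\rm Fix}(s)$ of nonempty interior (Remark \ref{tf2}), one writes ${\rm hull}({\rm Fix}(s))=\bigcup_{\omega\in F}\bigcap_{t\in\Gamma_\omega}{\rm Fix}(t)$ for a countable set $F\subseteq{\rm Fix}(s)$ --- this is where the countably-many-stabilizers hypothesis enters --- and, since this countable union of closed sets contains ${\rm Fix}(s)$ and hence has nonempty interior, Baire yields $\omega_0\in F$ with $\bigcap_{t\in\Gamma_{\omega_0}}{\rm Fix}(t)$ of nonempty interior $U$; thus $\Gamma_{\omega_0}$ fixes $U$ pointwise. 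Minimality (Proposition \ref{mp}) plus compactness of $\mathcal P(\mathcal{B}_{\pi})$ then give finitely many $s_1,\dots,s_n$ with $(s_1\cdot U)\cup\cdots\cup(s_n\cdot U)=\mathcal P(\mathcal{B}_{\pi})$, so the \emph{finite} intersection $\bigcap_{k=1}^n s_k\Gamma_{\omega_0}s_k^{-1}$ fixes every pure state and hence acts trivially; it is infinite because $\Gamma_{\omega_0}$ is normalish by Lemma \ref{nish}, contradicting faithfulness. You should replace the orbit decomposition with this hull-plus-compactness argument.
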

\begin{proof}
If the action of $\Gamma$ on $\mathcal{B}_{\pi}$ is not topologically free, by Remark \ref{tf2}, there is $s\neq e$ such that ${\rm Fix}(s)$ has non-empty interior in $\mathcal P(\mathcal{B}_{\pi})$. Let us observe that there is $\omega_0\in {\rm Fix}(s)$ such that stabilizer $\Gamma_{\omega_0}$ fixes a non-empty open
subset of $\mathcal P(\mathcal{B}_{\pi})$ pointwise.
For $\Lambda\leq \Gamma$ and $F\subset\mathcal P(\mathcal{B}_{\pi})$  let $\ker(\Lambda):=\bigcap_{s\in \Lambda} {\rm Fix}(s)$ and ${\rm hull}(F):= \bigcup_{\omega\in F}\ker(\Gamma_\omega)$. Note that for each pure state $\omega$, ${\rm hull}(\{\omega\})$ is a closed subset of $\mathcal P(\mathcal{B}_{\pi})$  containing $\omega$. By  assumption that $\Gamma$ contains only countably many stabilizers, there is a countable subset $F\subseteq {\rm Fix}(s)$  such that
${\rm hull}({\rm Fix}(s))={\rm hull}(F)$. On the other hand,  ${\rm hull}({\rm Fix}(s))$ contains ${\rm Fix}(s)$ and thus has non-empty interior. In particular, by the Baire category theorem, there is $\omega_0\in F$ such that ${\rm hull}(\{\omega_0\})$ has non-empty
interior, say $U$. Then $\Gamma_{\omega_0}$ fixes $U$ pointwise, as claimed.

Next, by minimality, there are $s_1,\cdots,s_n\in\Gamma$ such
that $(s_1 \cdot U)\cup\cdots \cup (s_n\cdot  U)= \mathcal P(\mathcal{B}_{\pi})$. Since the elements in $s_k\Gamma_{\omega_0}s_k^{-1}$ fix every point in $s_k\cdot U$, the elements in the intersection $\bigcap_{k=1}^{n}s_k\Gamma_{\omega_0}s_k^{-1}$ fix every point in $\mathcal P(\mathcal{B}_{\pi})$. Since $\Gamma_{\omega_0}$ is
normalish by Lemma \ref{nish},  the action could not be faithful. \qed	
\end{proof}

We mimic \cite[Definition 8.1]{bkko} as follows.

\begin{definition}
We say that a unitary representation $\pi$ of $\Gamma$ has Connes-Sullivan property $(CS)$ if for every representation $\sigma$ weakly contained in $\pi$, there exists an SOT-neighborhood $U$
	of the identity in $B(H_\sigma)$ such that $\sigma^{-1}(U) \subseteq {\rm Rad}_\pi(\Gamma)$.
\end{definition}

The group $\Gamma$ itself is said to have Connes-Sullivan property $(CS)$ whenever its left regular representation $\lambda$ has property $(CS)$ in the above sense (c.f., \cite[Definition 8.1]{bkko}).

It is known that if  ${\rm Rad}_\pi(\Gamma)$ is  trivial, then $\Gamma$ has no non-trivial $\pi$-amenable normal subgroup \cite[Corollary 4.5]{bek}. We show that in the presence of the Connes-Sullivan property $(CS)$, normal could be replaced by normalish when  $\lambda\precsim\pi$. 

\begin{lemma} \label{cs}
	If $\pi$  has Connes-Sullivan property $(CS)$ and ${\rm Rad}_\pi(\Gamma)$ is  trivial, then  for each $\sigma\preceq\pi$, $\sigma(\Gamma)$ is discrete in the relative SOT-topology of $B(H_\sigma)$. 
\end{lemma}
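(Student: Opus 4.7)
The plan is to derive the conclusion by combining the Connes--Sullivan hypothesis with two standard facts: that ${\rm Rad}_\pi(\Gamma)$ being trivial collapses the conclusion of property $(CS)$ to a genuine separation statement at the identity, and that a subgroup of $B(H_\sigma)$ is discrete as soon as the identity is an isolated point, because left multiplication by a fixed unitary is an SOT-homeomorphism of $B(H_\sigma)$.

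Concretely, fix $\sigma\preceq\pi$. Applying property $(CS)$ gives an SOT-neighborhood $U$ of the identity $1_{B(H_\sigma)}$ with $\sigma^{-1}(U)\subseteq {\rm Rad}_\pi(\Gamma)=\{e\}$. Since $\sigma(e)=1\in U$ always, this forces the equality $\sigma^{-1}(U)=\{e\}$. In particular, the only element of $\Gamma$ sent into $U$ is the identity; note that as a byproduct $\sigma$ is faithful, since any element of $\ker\sigma$ would belong to every SOT-neighborhood of $1$, hence to $\sigma^{-1}(U)$.

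Next, I would show that the same $U$, suitably translated, works at every other point of $\sigma(\Gamma)$. Given $g\in\Gamma$, the map $T\mapsto \sigma(g)T$ is an SOT-homeomorphism of $B(H_\sigma)$ (SOT-continuous because $T_i\to T$ SOT implies $\sigma(g)T_i\xi\to \sigma(g)T\xi$ for every $\xi\in H_\sigma$, and the same holds for its inverse $T\mapsto \sigma(g^{-1})T$). Hence $\sigma(g)U$ is an SOT-open neighborhood of $\sigma(g)$. Using that $\sigma$ is a group homomorphism, the intersection with $\sigma(\Gamma)$ unfolds as
\[
\sigma(g)U\cap \sigma(\Gamma)=\{\sigma(h):h\in\Gamma,\ \sigma(g^{-1}h)\in U\}=\{\sigma(h):g^{-1}h\in\sigma^{-1}(U)\}=\{\sigma(g)\},
\]
so every singleton in $\sigma(\Gamma)$ is relatively open, that is, $\sigma(\Gamma)$ is discrete in the relative SOT-topology of $B(H_\sigma)$.

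There is no real obstacle here; the only subtlety worth flagging is the careful use of the equality $\sigma^{-1}(U)=\{e\}$ rather than just an inclusion, which is precisely what the hypothesis ${\rm Rad}_\pi(\Gamma)=\{e\}$ provides and which in turn makes the group-translation argument clean.
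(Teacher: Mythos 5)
Your proposal is correct and follows essentially the same route as the paper: apply $(CS)$ to get $\sigma^{-1}(U)\subseteq{\rm Rad}_\pi(\Gamma)=\{e\}$, upgrade this to the equality $\sigma^{-1}(U)=\{e\}$, and conclude that the identity is isolated in $\sigma(\Gamma)$. The only difference is that you spell out the translation-by-$\sigma(g)$ homogeneity argument to pass from ``the identity is isolated'' to ``every point is isolated,'' a step the paper leaves implicit.
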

\begin{proof}
	By $(CS)$ we may choose  an SOT-neighborhood $U$
	of the identity $I_\sigma$ in $B(H_\sigma)$ such that $\sigma^{-1}(U) \subseteq {\rm Rad}_\pi(G)=\{e\}$. Therefore, $\sigma^{-1}(U)=\{e\}$, thus $U\cap \sigma(\Gamma)=\sigma\sigma^{-1}(U)=\sigma\{e\}=\{I_\sigma\}$, that is, $\{I_\sigma\}$ is open inside $\sigma(\Gamma)$ in the relative SOT-topology, as claimed. \qed	
\end{proof}

The next lemma is proved as part of the argument of the proof of \cite[Proposition 8.2]{bkko}.

\begin{lemma} \label{nish4}
	If  $\Lambda\leq \Gamma$ is normalish,  $\lambda_{\Gamma/\Lambda}(\Gamma)$ is not discrete in the relative SOT-topology of $B(\ell^2(\Gamma/\Lambda))$. 
\end{lemma}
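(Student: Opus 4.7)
The plan is to show that the identity operator $I\in B(\ell^2(\Gamma/\Lambda))$ is not isolated in the image $\lambda_{\Gamma/\Lambda}(\Gamma)$ for the SOT; this is exactly what non-discreteness of the image means. A basic SOT-neighborhood of $I$ has the form
\[
U=\bigl\{T: \|T\xi_i-\xi_i\|<\varepsilon,\ i=1,\dots,n\bigr\}
\]
for some $\xi_1,\dots,\xi_n\in\ell^2(\Gamma/\Lambda)$ and $\varepsilon>0$. A standard density argument (approximate each $\xi_i$ in norm by a finitely supported vector, using unitarity of $\lambda_{\Gamma/\Lambda}(g)$) reduces the task to basic neighborhoods defined by finitely supported vectors, so I may assume each $\xi_i$ is supported in a common finite set of cosets $h_1\Lambda,\dots,h_m\Lambda$ with $h_j\in\Gamma$.

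The central observation is purely algebraic: for any $g$ in the intersection $K:=\bigcap_{j=1}^{m}h_j\Lambda h_j^{-1}$, one has $gh_j\in h_j\Lambda$, so $gh_j\Lambda=h_j\Lambda$ and $\lambda_{\Gamma/\Lambda}(g)\delta_{h_j\Lambda}=\delta_{h_j\Lambda}$ for each $j$. By linearity $\lambda_{\Gamma/\Lambda}(g)\xi_i=\xi_i$ for every $i$, so automatically $\lambda_{\Gamma/\Lambda}(g)\in U$. The normalish hypothesis makes $K$ infinite, producing many candidate elements $g$.

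The remaining task, and the main obstacle, is to produce such a $g$ with $\lambda_{\Gamma/\Lambda}(g)\neq I$. Since $\ker(\lambda_{\Gamma/\Lambda})$ coincides with the normal core $N:=\bigcap_{t\in\Gamma}t\Lambda t^{-1}$, this amounts to exhibiting $g\in K\setminus N$ for every admissible finite choice of $h_j$'s — and in general one must know $K\supsetneq N$, which is not automatic from normalish alone. In the setting where the lemma is invoked together with Lemma~\ref{cs}, this separation comes for free: $\Lambda$ is $\pi$-amenable, so its normal core $N$ is a normal $\pi$-amenable subgroup of $\Gamma$, which is trivial by the hypothesis $\mathrm{Rad}_\pi(\Gamma)=\{e\}$. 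Once $N=\{e\}$, any nonidentity $g\in K$ works. Packaging this inductively one enumerates $\Gamma=\{\gamma_1,\gamma_2,\dots\}$ and, using normalish at each stage, selects $g_n\in \bigcap_{j\le n}\gamma_j\Lambda\gamma_j^{-1}\setminus N$; then $\lambda_{\Gamma/\Lambda}(g_n)$ fixes $\delta_{\gamma_j\Lambda}$ for all $j\le n$, so $\lambda_{\Gamma/\Lambda}(g_n)\to I$ in SOT while $\lambda_{\Gamma/\Lambda}(g_n)\neq I$, exhibiting $I$ as a limit point of $\lambda_{\Gamma/\Lambda}(\Gamma)$ and completing the proof.
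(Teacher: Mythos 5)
The paper offers no proof of its own here: it defers to the argument embedded in the proof of \cite[Proposition 8.2]{bkko}, and your reduction to basic SOT-neighborhoods, the approximation of the test vectors by finitely supported ones, and the observation that every $g\in\bigcap_{j}h_j\Lambda h_j^{-1}$ fixes each $\delta_{h_j\Lambda}$ is exactly that argument. The obstacle you isolate is genuine, and it is introduced precisely by the paper's reformulation: what the BKKO computation actually yields is that $\lambda_{\Gamma/\Lambda}^{-1}(U)$ is infinite for every SOT-neighborhood $U$ of the identity (a statement about group elements), whereas the lemma as stated asserts non-discreteness of the \emph{image}, which additionally requires an element of $\bigcap_{j}h_j\Lambda h_j^{-1}$ lying outside the normal core $N=\ker\lambda_{\Gamma/\Lambda}=\bigcap_{t\in\Gamma}t\Lambda t^{-1}$. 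Normalish alone does not supply this: for infinite $\Gamma$ the subgroup $\Lambda=\Gamma$ is normalish, yet $\lambda_{\Gamma/\Gamma}(\Gamma)$ is a single point, so the lemma is false as literally stated. Your repair is the right one and is available exactly where the lemma is invoked (Corollary \ref{nish5}): there $\Lambda$ is $\pi$-amenable, hence so is its normal core $N$ (a $\Gamma$-map into $\pi(\Lambda)'\subseteq\pi(N)'$ witnesses this, and $\pi(N)'$ is $\Gamma$-invariant because $N\unlhd\Gamma$), so $N\subseteq{\rm Rad}_\pi(\Gamma)=\{e\}$, after which any non-identity element of the infinite intersection produces a non-identity operator in the prescribed neighborhood.

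Two small points to tighten. First, non-discreteness of the image is indeed equivalent to $I$ not being isolated, but only because the image is a group of unitaries and $T\mapsto UT$ is an SOT-homeomorphism; this homogeneity should be stated rather than assumed. Second, your closing enumeration $\Gamma=\{\gamma_1,\gamma_2,\dots\}$ tacitly assumes $\Gamma$ countable; indexing instead by the directed set of finite subsets of $\Gamma$ gives the same net converging to $I$ in SOT without that assumption.
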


The next result extends \cite[Proposition 8.2]{bkko}. 

\begin{corollary} \label{nish5}
	If $\lambda\preceq\pi$, $\pi$  has Connes-Sullivan property $(CS)$ and ${\rm Rad}_\pi(\Gamma)$ is  trivial, then $\Gamma$ has no $\pi$-amenable normalish subgroup.
\end{corollary}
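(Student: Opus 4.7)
The plan is a contradiction argument using the quasi-regular representation $\lambda_{\Gamma/\Lambda}\colon\Gamma\to B(\ell^2(\Gamma/\Lambda))$. Assume $\Lambda\leq\Gamma$ is a $\pi$-amenable normalish subgroup. The heart of the proof is to exhibit the weak containment $\lambda_{\Gamma/\Lambda}\preceq\pi$; once this is achieved, Lemma~\ref{cs}, under the hypotheses $(CS)$ and $\mathrm{Rad}_\pi(\Gamma)=\{e\}$, forces $\lambda_{\Gamma/\Lambda}(\Gamma)$ to be SOT-discrete in $B(\ell^2(\Gamma/\Lambda))$, directly contradicting Lemma~\ref{nish4}, which asserts non-SOT-discreteness as a consequence of $\Lambda$ being normalish.

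To establish $\lambda_{\Gamma/\Lambda}\preceq\pi$, I would first show that $\Lambda$ is amenable as a group. The $\pi$-amenability of $\Lambda$, naturally interpreted so as to agree with the paper's definition on normal subgroups and for general $\Lambda$ to mean that $\pi|_\Lambda$ is an amenable representation in Bekka's sense, provides a $\Lambda$-invariant state on $B(H_\pi)$. Since $\lambda\preceq\pi$ restricts to $\lambda|_\Lambda\preceq\pi|_\Lambda$, Bekka's characterization $\mathbf{1}_\Lambda\preceq\pi|_\Lambda\otimes\overline{\pi|_\Lambda}$ combined with Fell's continuity of tensor products with a fixed representation is used to transfer the invariance down to $\lambda_\Lambda\otimes\overline{\lambda_\Lambda}$, yielding amenability of $\Lambda$ via the classical criterion $\mathbf{1}_\Lambda\preceq\lambda_\Lambda$. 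With $\Lambda$ amenable, the classical fact $\lambda_{\Gamma/\Lambda}\preceq\lambda$ combined with $\lambda\preceq\pi$ gives $\lambda_{\Gamma/\Lambda}\preceq\pi$ by transitivity of weak containment, as required.

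The principal obstacle is the amenability-transfer step, since Bekka-amenability is not automatically preserved under weak containment from below; one must carefully exploit the surjection $C^*_\pi(\Lambda)\twoheadrightarrow C^*_\lambda(\Lambda)$ arising from $\lambda\preceq\pi$ together with the invariant-mean characterization of amenable representations. Alternatively, one may bypass group-amenability of $\Lambda$ entirely by combining the induction formula $\mathrm{ind}_\Lambda^\Gamma(\pi|_\Lambda)\cong \pi\otimes\lambda_{\Gamma/\Lambda}$ with Fell's absorption principle $\pi\otimes\lambda\cong\lambda^{\oplus\dim H_\pi}$ and the hypothesis $\lambda\preceq\pi$ to derive $\lambda_{\Gamma/\Lambda}\preceq\pi$ directly from the $\pi$-amenability of $\Lambda$, and then concluding the contradiction via Lemmas~\ref{cs} and \ref{nish4} as above.
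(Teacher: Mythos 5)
Your overall architecture coincides with the paper's: assume $\Lambda$ is $\pi$-amenable and normalish, establish $\lambda_{\Gamma/\Lambda}\preceq\pi$, and then play Lemma \ref{cs} (which makes $\lambda_{\Gamma/\Lambda}(\Gamma)$ SOT-discrete) against Lemma \ref{nish4} (which forbids this for a normalish $\Lambda$). The problem is exactly where you locate it, and neither of your two routes closes it. In the first route you want to pass from $1_\Lambda\preceq\pi|_\Lambda\otimes\overline{\pi|_\Lambda}$ to $1_\Lambda\preceq\lambda_\Lambda\otimes\overline{\lambda_\Lambda}$. But the hypothesis $\lambda\preceq\pi$ only gives $\lambda|_\Lambda\preceq\pi|_\Lambda$, hence $\lambda_\Lambda\otimes\overline{\lambda_\Lambda}\preceq\pi|_\Lambda\otimes\overline{\pi|_\Lambda}$, and weak containment of $1_\Lambda$ in the \emph{larger} representation says nothing about the smaller one; Fell continuity moves containments in one direction only and cannot reverse this. (Amenability of representations passes \emph{up} along $\preceq$, not down; the downward transfer you need would require $\pi|_\Lambda\preceq\lambda_\Lambda$, i.e.\ the hypothesis $\pi\preceq\lambda$ rather than $\lambda\preceq\pi$.) The alternative route fares no better: from $1_\Lambda\preceq\pi|_\Lambda\otimes\overline{\pi|_\Lambda}$, induction and the push-pull formula give only $\lambda_{\Gamma/\Lambda}\preceq\pi\otimes\overline{\pi}\otimes\lambda_{\Gamma/\Lambda}$, and Fell absorption lets you collapse the right-hand side into a multiple of $\lambda$ only once you already know $\lambda_{\Gamma/\Lambda}\preceq\lambda$ --- which is precisely the amenability of $\Lambda$ you were trying to bypass. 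So, as written, the proof does not go through.

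The paper closes this step by a different mechanism. From $\lambda\preceq\pi$ one gets a u.c.p.\ $\Gamma$-map extending the canonical $*$-epimorphism $C^*_\pi(\Gamma)\twoheadrightarrow C^*_\lambda(\Gamma)$, whose multiplicative domain contains $C^*_\pi(\Gamma)$ and which therefore carries $\pi(\Lambda)'$ into $\lambda(\Lambda)'$; composing with the $\Gamma$-map $B(H_\pi)\to\pi(\Lambda)'$ witnessing $\pi$-amenability, the paper deduces that $\Lambda$ is $\lambda$-amenable, and then invokes \cite[Proposition 4.3]{bek} (whose proof does not use normality of $\Lambda$) to conclude that $\Lambda$ is amenable as a group. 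From that point on --- $\lambda_{\Gamma/\Lambda}\preceq\lambda\preceq\pi$, then Lemmas \ref{cs} and \ref{nish4} --- your argument and the paper's agree. To repair your write-up, replace the tensor-square transfer by this reduction to $\lambda$-amenability (or cite it directly); the rest of your sketch is sound.
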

\begin{proof}
Let $\Lambda\leq \Gamma$. Since $\lambda\preceq\pi$, there is a $\Gamma$-inclusion $\pi(\Lambda)^{'}\hookrightarrow \lambda(\Lambda)^{'}$, and so $\pi$-amenability of $\Lambda$ implies its $\lambda$-amenability, which  then implies amenability by \cite[Proposition 4.3]{bek} (there it is assumed that $\Lambda$ is a normal subgroup, but this is not used in the proof). Next, recall that when $\Lambda$ is amenable, $\lambda_{\Gamma/\Lambda}\preceq\lambda$.  By assumption, $\lambda_{\Gamma/\Lambda}$ is equivalent to some sub-representation $\sigma$ of $\pi$, say via Ad$_{u}$, for some unitary $u$ between the corresponding Hilbert spaces. Then Ad$_{u}$ restricts to a homeomorphism between $\lambda_{\Gamma/\Lambda}(\Gamma)$ and $\sigma(\Gamma)$, and so $\lambda_{\Gamma/\Lambda}(\Gamma)$ is not discrete by Lemma \ref{cs}. Therefore, $\Lambda$ is not normalish by Lemma \ref{nish4}.  \qed	
\end{proof}

\begin{remark}
In the above corollary,  if we drop the assumption $\lambda\preceq\pi$, we may still conclude that $\Gamma$ has no amenable normalish subgroup in $\mathcal W(\pi)$, where  $\mathcal W(\pi):= \{\Lambda\leq \Gamma: \lambda_{\Gamma/\Lambda} \preceq\pi\}$. Since $\mathcal W(\lambda)$ consists of the set
of  amenable subgroups of $\Gamma$, this is still more general than the earlier results on absence of amenable normalish subgroups. Note that,  $\mathcal W(\lambda_{\Gamma/\Lambda})$ consists of subgroups
$H$ with $\lambda\leq \Sigma\leq \Gamma$ such that $\Lambda$ is coamenable in $\Sigma$ \cite[Proposition 2.3]{ks}. Also note that if $\mathcal W(\pi)$ admits a $\Gamma$-invariant probability, then
$C^*_\pi(\Gamma)$ admits a trace \cite[Remark 3.19]{ks}.
\end{remark}

The next result now follows from Lemma \ref{nish}.

\begin{corollary} \label{tf3}
	If $\Gamma$ has no non-trivial finite normal subgroup, $\lambda\precsim\pi$, $\pi$  has Connes-Sullivan property $(CS)$ and ${\rm Rad}_\pi(\Gamma)$ is  trivial, and $\Gamma_\omega$ is amenable for some pure state $\omega\in\mathcal P(\mathcal{B}_{\pi})$, then the action of $\Gamma$ on $\mathcal{B}_{\pi}$ is  topologically free.
\end{corollary}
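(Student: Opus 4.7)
The plan is to argue by contradiction, closely following the mechanism used in Corollary \ref{nish5}. Suppose the action of $\Gamma$ on $\mathcal{B}_\pi$ is not topologically free. Since $\Gamma$ has no non-trivial finite normal subgroup, Lemma \ref{nish} forces the stabilizer $\Gamma_\varrho$ to be normalish for \emph{every} pure state $\varrho\in\mathcal P(\mathcal{B}_\pi)$; in particular, the given $\Gamma_\omega$ is simultaneously amenable and normalish.

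Next, amenability of $\Gamma_\omega$ yields the weak containment $\lambda_{\Gamma/\Gamma_\omega}\preceq\lambda$ (the standard fact already recorded in the proof of Corollary \ref{nish5}), and combining it with $\lambda\precsim\pi$ gives $\sigma:=\lambda_{\Gamma/\Gamma_\omega}\preceq\pi$. I then invoke the Connes-Sullivan property of $\pi$ applied to $\sigma$: there is an SOT-neighborhood $U$ of the identity $I_\sigma$ in $B(H_\sigma)$ such that $\sigma^{-1}(U)\subseteq {\rm Rad}_\pi(\Gamma)$. Because ${\rm Rad}_\pi(\Gamma)=\{e\}$ by hypothesis, this forces $\sigma^{-1}(U)=\{e\}$, whence $U\cap \sigma(\Gamma)=\{I_\sigma\}$. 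Thus $\lambda_{\Gamma/\Gamma_\omega}(\Gamma)$ is discrete in the relative SOT-topology of $B(H_\sigma)$, exactly as in Lemma \ref{cs}.

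On the other hand, $\Gamma_\omega$ being normalish, Lemma \ref{nish4} asserts that $\lambda_{\Gamma/\Gamma_\omega}(\Gamma)$ is \emph{not} discrete in the relative SOT-topology. The two conclusions contradict each other, and the contradiction forces the action of $\Gamma$ on $\mathcal{B}_\pi$ to be topologically free.

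The main subtlety I anticipate is reconciling the two notions of amenability at play: Corollary \ref{nish2} asks for a $\pi$-amenable stabilizer, while the present hypothesis only supplies an amenable one (a formally weaker condition when $\lambda\preceq\pi$). The route sketched above sidesteps this gap by never invoking Corollary \ref{nish2}, working instead with the quasi-regular representation $\lambda_{\Gamma/\Gamma_\omega}$ directly and exploiting the fact that amenability of $\Gamma_\omega$ alone is enough to push $\lambda_{\Gamma/\Gamma_\omega}$ into the weak containment class where the Connes-Sullivan property of $\pi$ applies. No implication between plain amenability and $\pi$-amenability is needed along the way.
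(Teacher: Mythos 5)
Your proof is correct and follows essentially the same route the paper intends: Lemma \ref{nish} produces a normalish amenable stabilizer, and the discreteness argument of Lemma \ref{cs} combined with Lemma \ref{nish4} (exactly the mechanism of Corollary \ref{nish5}) yields the contradiction. Your observation that plain amenability of $\Gamma_\omega$ suffices --- because the argument of Corollary \ref{nish5} only uses amenability of the subgroup once $\lambda_{\Gamma/\Gamma_\omega}\preceq\lambda\preceq\pi$ is in hand --- correctly resolves the one point the paper leaves implicit.
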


Next we turn into Power's averaging property. First we need a series of lemmas.

\begin{lemma} \label{char}
	If  $\Lambda\leq \Gamma$ is subgroup such that $\lambda_{\Gamma/\Lambda}\preceq\pi$, then the characteristic function $1_\Lambda$ extends to a state on $C^*_\pi(\Gamma)$. In particular, this holds when $\Lambda$ is amenable.
\end{lemma}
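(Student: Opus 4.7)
The plan is to produce the state directly from the quasi-regular representation and then transport it along the weak containment. First I would introduce the canonical vector state $\rho$ on $C^*_{\lambda_{\Gamma/\Lambda}}(\Gamma)$ defined by $\rho(a) := \langle a\delta_\Lambda,\delta_\Lambda\rangle$, where $\delta_\Lambda \in \ell^2(\Gamma/\Lambda)$ is the characteristic function of the trivial coset. A direct calculation gives
$$\rho(\lambda_{\Gamma/\Lambda}(s)) = \langle \delta_{s\Lambda},\delta_\Lambda\rangle = 1_\Lambda(s) \qquad (s\in\Gamma),$$
so $\rho$ is a state on the quasi-regular C*-algebra whose restriction to the range of $\lambda_{\Gamma/\Lambda}$ reproduces $1_\Lambda$.

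Next I would use the weak containment hypothesis $\lambda_{\Gamma/\Lambda}\preceq\pi$ exactly as in the definition recalled in Section \ref{bdry}: it yields a canonical *-homomorphism $\Phi:C^*_\pi(\Gamma)\to C^*_{\lambda_{\Gamma/\Lambda}}(\Gamma)$ sending $\pi_s$ to $\lambda_{\Gamma/\Lambda}(s)$. The composition $\omega:=\rho\circ\Phi$ is then a state on $C^*_\pi(\Gamma)$, and by construction satisfies $\omega(\pi_s)=1_\Lambda(s)$ for every $s\in\Gamma$, proving the main claim.

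For the final sentence, I would verify that amenability of $\Lambda$ forces $\lambda_{\Gamma/\Lambda}\preceq\pi$ (in the presence of the standing hypothesis $\lambda\preceq\pi$ running through this section). Amenability of $\Lambda$ is equivalent to the trivial character $1_\Lambda$ being weakly contained in $\lambda|_\Lambda$; continuity of induction under weak containment (c.f.\ \cite[1.F.21]{bd}) then yields
$$\lambda_{\Gamma/\Lambda} = {\rm ind}_\Lambda^\Gamma(1_\Lambda) \preceq {\rm ind}_\Lambda^\Gamma(\lambda|_\Lambda) = \lambda \preceq \pi,$$
and the first part applies. There is no real obstacle here; the only step requiring mild care is invoking the correct continuity statement for induction of representations to ensure weak containment is preserved, everything else being a routine vector-state construction combined with pulling back along $\Phi$.
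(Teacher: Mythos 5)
Your argument is correct, and it is exactly the device the paper uses elsewhere (the vector state at $\delta_\Lambda$ realizing $1_\Lambda$, pulled back along the $*$-homomorphism $C^*_\pi(\Gamma)\to C^*_{\lambda_{\Gamma/\Lambda}}(\Gamma)$ furnished by $\lambda_{\Gamma/\Lambda}\preceq\pi$; compare Lemma \ref{can}$(iii)$ and Example \ref{ex1}$(iv)$); the paper's own proof of Lemma \ref{char} is in fact left unfinished after the words ``First note that there is a state,'' so your write-up supplies precisely what is missing. You are also right to flag that the final sentence of the lemma is not literally true for arbitrary $\pi$ and needs the additional hypothesis $\lambda\preceq\pi$ (so that amenability of $\Lambda$ gives $\lambda_{\Gamma/\Lambda}\preceq\lambda\preceq\pi$), which the lemma's statement omits.
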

\begin{proof}
First note that there is a state 	
\end{proof}

\section{Concrete Examples} \label{ex}

Up to here we have worked with an abstract representation $\pi$. In this section we examine the results of previous sections for concrete instances of $\pi$.  

\subsection{{\bf Representations coming from boundary actions}} The properties of representations associated to boundary actions are recently studied in \cite{ks}. An action of $\gamma$ on a compact Hausdorff space $X$ is strongly proximal if for every probability measure $\nu\in P(X)$ the weak$^*$-closure
of the orbit $\Gamma\nu$ contains some point measure $\delta_x$, $x\in X$. A  {\it boundary action} is an action which is both minimal and strongly
proximal. A related notion is that of  an {\it extreme boundary action}, which is characterized by the condition that given a closed subset
$C\neq X$ and open subset $U\neq \emptyset$, there is $t\in \Gamma$  such that $t\cdot C\subseteq U$. This condition is known to be stronger than being a boundary action
\cite[Theorem 2.3]{gl}.

For a $\Gamma$-space $X$ and $x\in X$, the stabilizer subgroup of $x$ is $$\Gamma_x := \{t\in\Gamma:
	t\cdot x = x\}.$$ The open stabilizer at $x$ is the subgroup
$$\Gamma^0_x := \{t\in\Gamma: t \ {\rm fixes\ an\ open\ neighborhood\ of\ }x\}.$$ Then $\Gamma^0_x\unlhd\Gamma_x$, for $x\in X$.  The open stabilizer
map Stab$^0$ from $X$ to the topological space Sub$(\Gamma)$ of subgroups of $\Gamma$, endowed with the Chabauty
topology (the restriction of the product topology on $\{0, 1\}^\Gamma$) is defined by $x\mapsto \Gamma^0_x$. We say that  the action $\Gamma\curvearrowright X$
has {\it Hausdorff germs} if the open stabilizer
map Stab$^0$ is continuous on $X$ \cite[page 4]{ks} (c.f., \cite[Definition 2.9]{lb}). Similarly one could define the stabilizer
map Stab$:X\to$ Sub$(\Gamma)$; $x\mapsto \Gamma_x$. This map is continuous at points $x$ in which $\Gamma_x=\Gamma^0_x$ \cite[Lemma 2.2]{lb}. 

An {\it action class} of $\Gamma$ is a collection $\mathfrak C$ of compact $\Gamma$-spaces. A representation $\pi\in$Rep$(\Gamma)$ is a $\mathfrak C$-representation, writing $\pi\in$Rep$_{\mathfrak{C}}(\Gamma)$, if there is a $\Gamma$-map: $C(X) \to B(H_\pi)$, for some $X\in\mathfrak{C}$. As a concrete example, if $X\in\mathfrak{C}$ and $\nu$ is a quasi-invariant $\sigma$-finite measure on $X$, then the Koopman representation $\kappa_\nu$ defined by $$\kappa_\nu(t)\xi(x):=
[\frac{dt\nu}{d\nu}]^{\frac{1}{2}}(x)\xi(t^{-1}\cdot x);\ \ t\in\Gamma, \xi\in L^2(X, \nu),$$ is a $\mathfrak{C}$-representation \cite[Example 3.3]{ks}. If 
$\pi$ is in Rep$_{\mathfrak{C}}(\Gamma)$, then so is $\sigma$, for each $\sigma\preceq\pi$ \cite[Proposition 3.4]{ks}. For $\pi\in$Rep$_{\mathfrak{C}}(\Gamma)$,
$\mathfrak{C}^\pi$ is the collection of all $X\in \mathfrak{C}$ such that there is a $\Gamma$-map from $C(X)$
to $B(H_\pi)$. 

A subgroup $\Lambda\leq \Gamma$  is a $\mathfrak C$-subgroup, writing $\Lambda\in$Sub$_{\mathfrak{C}}(\Gamma)$, 
if $\lambda_{\Gamma/\Lambda}\in$Sub$_{\mathfrak{C}}(\Gamma)$. This holds iff  $\Lambda$ fixes a probability on some $X\in \mathfrak{C}$ \cite[Proposition 3.10]{ks}. The subgroup class Sub$_{\mathfrak{C}}(\Gamma)$
is invariant under conjugation by an element $s\in G$, since representations  $\lambda_{\Gamma/\Lambda}$ and $\lambda_{\Gamma/\Lambda^s}$ are clearly unitarily equivalent (cf., \cite[Remark 6.2]{bk}).

Alternatively one could say that  a normal subgroup $\Lambda\unlhd \Gamma$, is called a $\mathfrak C$-subgroup, writing
if   
$\pi^\Lambda\in$Sub$_{\mathfrak{C}}(\Gamma)$, where $\pi^\Lambda$ is the sub-representation of $\pi$ constructed in the beginning of Section \ref{s}. The next lemma shows that the two notions coincide for normal subgroups.

\begin{lemma} \label{eq}
Let $\mathfrak C$ be an action class. For a normal subgroup $\Lambda\unlhd \Gamma$, the following are equivalent:

$(i)$ $\lambda_{\Gamma/\Lambda}\in$Sub$_{\mathfrak{C}}(\Gamma)$,

$(ii)$ $\Lambda$ fixes a probability on some $X\in \mathfrak{C}$,

$(ii)$ $\pi^\Lambda\in$Sub$_{\mathfrak{C}}(\Gamma)$.
\end{lemma}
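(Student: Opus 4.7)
The plan is to take $(i)\Leftrightarrow (ii)$ for granted, since it coincides with \cite[Proposition 3.10]{ks}, and close the loop by establishing $(i)\Rightarrow (iii)\Rightarrow (ii)$. The key ingredient is to show that $\pi^\Lambda\preceq\lambda_{\Gamma/\Lambda}$ holds for every unitary representation $\pi$ of $\Gamma$ and every normal $\Lambda\unlhd\Gamma$; once in hand, this would upgrade $(i)$ to $(iii)$ via the closure of $\mathrm{Rep}_{\mathfrak{C}}(\Gamma)$ under weak containment recorded in the preliminaries from \cite[Proposition 3.4]{ks}.

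To prove $\pi^\Lambda\preceq\lambda_{\Gamma/\Lambda}$, I would invoke Frobenius reciprocity in the form \cite[1.F.21]{bd} that the paper has already used elsewhere: $\sigma\preceq\mathrm{ind}_H^\Gamma(\sigma|_H)$ for any representation $\sigma$ of $\Gamma$ and any subgroup $H\leq\Gamma$. Applied to $\sigma=\pi^\Lambda$ and $H=\Lambda$, this yields $\pi^\Lambda\preceq\mathrm{ind}_\Lambda^\Gamma(\pi^\Lambda|_\Lambda)$. Since $H_\pi^\Lambda$ is by construction the space of $\Lambda$-fixed vectors in $H_\pi$, the restriction $\pi^\Lambda|_\Lambda$ is a direct sum of copies of the trivial character $1_\Lambda$. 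Inducing term by term and absorbing the multiplicity (which is invisible to weak containment) then gives $\mathrm{ind}_\Lambda^\Gamma(\pi^\Lambda|_\Lambda)$ weakly equivalent to $\mathrm{ind}_\Lambda^\Gamma(1_\Lambda)=\lambda_{\Gamma/\Lambda}$, so that $\pi^\Lambda\preceq\lambda_{\Gamma/\Lambda}$, as required.

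For $(iii)\Rightarrow (ii)$ the argument is elementary: given a $\Gamma$-equivariant u.c.p.\ map $\Phi\colon C(X)\to B(H_\pi^\Lambda)$ with $X\in\mathfrak{C}$, any state $\omega$ on $B(H_\pi^\Lambda)$ is automatically $\Lambda$-invariant, because $\pi^\Lambda(\Lambda)=\{I\}$ forces the restriction of $\mathrm{Ad}_{\pi^\Lambda}$ to $\Lambda$ to be trivial. Combined with the $\Gamma$-equivariance of $\Phi$, the composition $\omega\circ\Phi$ is a $\Lambda$-invariant state on $C(X)$, equivalently a $\Lambda$-fixed probability on $X$.

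The main subtlety to watch for is the degenerate case $H_\pi^\Lambda=\{0\}$, in which $\pi^\Lambda$ is the zero representation and condition $(iii)$ is vacuous. One should either restrict attention to representations $\pi$ with $H_\pi^\Lambda\neq\{0\}$, or adopt the convention that the zero representation belongs to $\mathrm{Rep}_{\mathfrak{C}}(\Gamma)$ vacuously; with this minor caveat the three conditions are equivalent as stated.
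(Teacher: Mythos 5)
Your overall architecture coincides with the paper's: $(i)\Leftrightarrow(ii)$ is quoted from \cite[Proposition 3.10]{ks} in both, and your proof of $(iii)\Rightarrow(ii)$ --- compose a $\Gamma$-map $C(X)\to B(H_\pi^\Lambda)$ with any state, using that $\Lambda$ acts trivially on $B(H_\pi^\Lambda)$ --- is exactly the paper's argument. For the remaining implication the paper goes $(ii)\Rightarrow(iii)$ by pushing a $\Lambda$-invariant probability through the Poisson map $C(X)\to\ell^\infty(\Gamma/\Lambda)\hookrightarrow B(\ell^2(\Gamma))$ and then composing with a $\Gamma$-map $B(\ell^2(\Gamma))\to B(H_\pi^\Lambda)$ obtained from $\pi^\Lambda\preceq{\rm ind}_\Lambda^\Gamma(\pi^\Lambda|_\Lambda)={\rm ind}_\Lambda^\Gamma(1_\Lambda)$, whereas you go $(i)\Rightarrow(iii)$ via the closure of ${\rm Rep}_{\mathfrak C}(\Gamma)$ under weak containment from \cite[Proposition 3.4]{ks}. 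These are cosmetically different but hinge on the identical key containment $\pi^\Lambda\preceq\lambda_{\Gamma/\Lambda}$.

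That containment is where your argument has a genuine gap. The form of \cite[1.F.21]{bd} you invoke --- $\sigma\preceq{\rm ind}_H^\Gamma(\sigma|_H)$ for \emph{every} subgroup $H\leq\Gamma$ --- is false: since ${\rm ind}_H^\Gamma(\sigma|_H)\cong\sigma\otimes\lambda_{\Gamma/H}$, taking $H=\{e\}$ and $\sigma=1_\Gamma$ it would give $1_\Gamma\preceq\lambda$, i.e.\ amenability of $\Gamma$. The correct hypothesis, and the one under which the paper itself cites \cite[1.F.21]{bd} in the Remark following Theorem \ref{tf}, is co-amenability of $H$ in $\Gamma$. Concretely, for $\Lambda=\{e\}$ and any $\pi$ containing $1_\Gamma$ with $\Gamma$ nonamenable one has $\pi^\Lambda=\pi\not\preceq\lambda=\lambda_{\Gamma/\Lambda}$, so $\pi^\Lambda\preceq\lambda_{\Gamma/\Lambda}$ cannot hold unconditionally; your ``degenerate case'' caveat about $H_\pi^\Lambda=\{0\}$ does not cover this, since the fixed subspace here is all of $H_\pi$. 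To be fair, the paper's own proof of $(ii)\Rightarrow(iii)$ asserts exactly the same containment without a co-amenability assumption (and even writes ${\rm ind}_\Lambda^\Gamma(1_\Lambda)=\lambda$ rather than $\lambda_{\Gamma/\Lambda}$), so your proposal faithfully reproduces a defect already present in the source; but as written, neither your $(i)\Rightarrow(iii)$ nor the paper's $(ii)\Rightarrow(iii)$ is established without an extra hypothesis such as co-amenability of $\Lambda$ or $\pi\preceq\lambda$.
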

\begin{proof}
$(i)\Leftrightarrow (ii)$. This is proved in \cite[Proposition 3.10]{ks}.

$(iii)\Rightarrow (ii)$. Let $\psi: C(X)\to B(H_\pi^\Lambda)$ be a $\Gamma$-map for $X\in\mathfrak{C}$. Since $\Lambda$ acts trivially on $B(H_\pi^\Lambda)$, the composition of $\psi$ with any state on $B(H_\pi^\Lambda)$ would be a $\Lambda$-equivariant state on $C(X)$.

$(ii)\Rightarrow (iii)$. Let $\nu$ be a $\lambda$-invariant probability on $X\in \mathfrak{C}$, then we have the chain of $\Gamma$-maps
$$C(X)\to \ell^\infty(\Gamma/\Lambda)=\ell^\infty(\Gamma)^\Lambda\hookrightarrow\ell^\infty(\Gamma)\hookrightarrow B(\ell^2(\Gamma)),$$ 
where the first map is the Poisson map. Now since
$$\pi^\Lambda\preceq{\rm ind}_\Lambda^\Gamma(\pi^\Lambda|_\Lambda)={\rm ind}_\Lambda^\Gamma(1_\Lambda)=\lambda,$$ 
there is a $\Gamma$-map:  $B(\ell^2(\Gamma))\to B(H_\pi^\Lambda)$, which is composed with the above chain of $\Gamma$-maps to give a $\Gamma$-map:  $C(X)\to B(H_\pi^\Lambda)$.	\qed	
\end{proof}
	
In \cite{ks}, the authors specify three action classes, i.e., the action classes $\mathfrak{B}_0$  of all $\Gamma$-boundaries, and sub-classes $\mathfrak{B}_1$ and $\mathfrak{B}_2$ of respectively faithful and topologically free $\Gamma$-boundaries. The subgroup class Sub${}_{\mathfrak{B}_2}(\Gamma)$ coincides with the class  of weakly parabolic subgroups in the sense of Bekka and Kalantar \cite[Definition 6.1]{bk}. The subgroup classes Sub${}_{\mathfrak{B}_1}(\Gamma)$ and Sub${}_{\mathfrak{B}_2}(\Gamma)$ are either empty or contain all amenable subgroups of $\Gamma$. Moreover, Sub${}_{\mathfrak{B}_1}(\Gamma)$ is non-empty iff $\Gamma$ has trivial amenable
radical, and Sub${}_{\mathfrak{B}_2}(\Gamma)$ is non-empty iff $\Gamma$ is C*-simple (see, \cite{kk}, \cite{bkko}). For subgroups $\Lambda_1\leq \Lambda_2$, with $\Lambda_1$ co-amenable
in (to) $\Lambda_2$ (relative to $\Gamma$; cf., \cite[7.C]{cm}), then $\Lambda_1\in$Sub${}_{\mathfrak{B}_k}(\Gamma)$ implies $\Lambda_2\in$Sub${}_{\mathfrak{B}_k}(\Gamma)$, for $k=1,2$ (because if $\Lambda_1$ fixes a probability on some $X\in \mathfrak{B}_k$, so does $\Lambda_2$). Finally, Sub${}_{\mathfrak{B}_1}(\Gamma)$ contains no non-trivial normal subgroup of $\Gamma$ \cite[Proposition 3.15]{ks} and Sub${}_{\mathfrak{B}_2}(\Gamma)$ contains no recurrent subgroup \cite[Corollary 6.8]{bk} (see also, \cite[Theorem 1.1]{ke}).

In Proposition \ref{rad3}, we observed that all extendable traces on $C^*_\pi(\Gamma)$ are supported on {\rm Rad}$_{\pi}(\Gamma)=\ker(\Gamma\curvearrowright \mathcal B_\pi)$. In general, all traces on $C^*_\pi(\Gamma)$ are supported on
$N_\pi:=\bigcap_{X\in \mathfrak{B}_0^\pi}\ker(\Gamma\curvearrowright X)$ \cite[Corollary 3.6]{ks}. It is natural to ask the relation between {\rm Rad}$_{\pi}(\Gamma)$ and $N_\pi$.

\begin{remark}
	Since there is a {\rm Rad}$_{\pi}(\Gamma)$-invariant state on $B(H_\pi)$, there is such a state on $C(X)$, for $X\in \mathfrak{B}_0^\pi$. But {\rm Rad}$_{\pi}(\Gamma)\unlhd\Gamma$, and so by strong proximality, {\rm Rad}$_{\pi}(\Gamma)$ acts trivially on such $X$, that is, {\rm Rad}$_{\pi}(\Gamma)\subseteq N_\pi$ \cite[Remark 3.7]{ks}. Conversely, if $\mathcal B_\pi$ is commutative, the inverse Gelfand transform is a $\Gamma$-map: $C(\hat{\mathcal B}_\pi)\to \mathcal B_\pi$. Composing this with the incluson map: $\mathcal B_\pi\hookrightarrow B(H_\pi)$, we get a $\Gamma$-map: $C(\hat{\mathcal B}_\pi)\to B(H_\pi)$. But $\hat{\mathcal B}_\pi$ is a $\Gamma$-boundary by Proposition \ref{mp}, thus  $\hat{\mathcal B}_\pi\in\mathfrak{B}_0^\pi$, therefore, in this case, {\rm Rad}$_{\pi}(\Gamma)=N_\pi$. 
\end{remark}

The group $\Gamma$ acts (continuously) on Sub$(\Gamma)$
by conjugation. An {\it invariant random subgroup} (IRS) on $\Gamma$ is a $\Gamma$-invariant regular probability
measure on Sub$(\Gamma)$. It is known that $\Gamma$ has unique trace property iff $\Gamma$ has no non-trivial amenable IRS \cite[Corollary
1.5]{bdl}, \cite[Theorem 1.3]{bkko}). It is not however known if every IRS with support contained in Sub${}_{\mathfrak{B}_1}(\Gamma)$ is trivial. \cite[Question 3.16]{ks}. Recall that  $\mathcal W(\pi) := \{\Lambda\leq\Gamma: \lambda_{\Gamma/\Lambda}\precsim\pi\}$. This is  a
closed conjugation-invariant subset of Sub$(\Gamma)$ and a compact $\Gamma$-space. If $\mathcal W(\pi)$ admits a $\Gamma$-invariant probability, then
$C^*_\pi(\Gamma)$ admits a trace (cf., the proof of \cite[Theorem 3.17]{ks}).  The converse  is not true: for any unital C*-algebra
$A$ admitting a trace, a group $\Gamma$ of unitaries generating $A$ and
containing $-1_A$ gives a unitary representation  which admits trace not come from an IRS \cite[Remark 3.19]{ks}. The set $\mathcal W(\lambda)$ consists of all amenable subgroups of $\Gamma$. In general, $\mathcal W(\lambda_{\Gamma/\Lambda})$ contains all subgroups
containing $\Lambda$ co-amenably \cite[Proposition 2.3]{ks}. It is known that any IRS supported on $\mathcal W(\pi)$ is supported on $N_\pi$. In particular, if $\Lambda\in$Sub${}_{\mathfrak{B}_1}(\Gamma)$, any IRS supported on $\mathcal W(\lambda_{\Gamma/\Lambda})$ is trivial.

In Corollary \ref{rad4}, we observed that if $\Gamma$ acts faithfully on $\mathcal{B}_{\pi}$, then  $C^*_\pi(\Gamma)$ has either has no extendable trace or has a unique one. If we specify the action class of $\pi$, stronger results are possible \cite[Corollaries 3.8, 3.12]{ks}.

\begin{example}\label{ex1}
$(i)$ If $\pi\in$Rep$_{\mathfrak{B_1}}(\Gamma)$, then either $C^*_\pi(\Gamma)$ admits no trace, or
otherwise $\lambda\precsim\pi$ and the canonical trace is the unique trace
on $C^*_\pi(\Gamma)$.

$(ii)$ If $\pi\in$Rep$_{\mathfrak{B_2}}(\Gamma)$, then the canonical trace is the unique boundary map on $C^*_\pi(\Gamma)$,  $\lambda\precsim\pi$, and  $C^*_\pi(\Gamma)$ has a maximal proper ideal.

$(iii)$ Let $\Lambda\leq \Gamma$ be a subgroup for which $C^*_{\lambda_{\Gamma/\Lambda}}(\Gamma)$ is nuclear.
Then $C^*_{\lambda_{\Gamma/\Lambda}}(\Gamma)$ admits a trace iff $1_\Gamma\precsim\lambda_{\Gamma/\Lambda}.$ Indeed, if $X$ is a compact $\Gamma$-space and $\pi\in$Rep$_{\{X\}}(\Gamma)$ is such that $C^*_\pi(\Gamma)$ is
nuclear and admits a trace, then $X$ admits a $\Gamma$-invariant probability \cite[Proposition 3.11]{ks}. Now the above assertion follows from the  special case $X=\beta({\Gamma/\Lambda})$.  

$(iv)$ If $X$ is a $\Gamma$-boundary and $N:=\ker(\Gamma \curvearrowright X)$, then $C^*_\pi(\Gamma)$ admits a trace iff the induced action $\Gamma/N\curvearrowright X$ is topologically free, iff $\lambda_{\Gamma/N} \precsim\pi$. In this case, $1_N$ extends to the unique trace on
$C^*_\pi(\Gamma)$. Indeed, if $\tau$ is a trace on $C^*_\pi(\Gamma)$, then $x\mapsto \tau(a)1_{\partial_F \Gamma}$
is a boundary map on $C^*_\pi(\Gamma)$, which is then has to be unique. Thus  $\Gamma/N\curvearrowright X$ has to be topologically free (see, \cite[Theorem 4.8]{ks}). Conversely, if $\Gamma/N$ acts topologically free on $X$, then
the restriction of  boundary map on
$C^*_\pi(\Gamma)$ to $\pi(\Gamma)$ is $1_N\cdot 1_{\partial_F \Gamma}$. Composing this boundary map with a point mass at $z\in \partial_F \Gamma$, 
we get a state on $C^*_\pi(\Gamma)$ whose restriction to to $\pi(\Gamma)$ is $1_N$. But $\delta_N\in \ell^2(\Gamma/N)$ is a
cyclic vector for $\lambda_{\Gamma/N}$, thus $\lambda_{\Gamma/N} \precsim\pi$. 
\end{example}

As observed in part $(ii)$ of the above example, that uniqueness of the boundary maps is related to uniqueness of trace and C*-simplicity. Recall that for a $\Gamma$-C*-algebra $A$, a boundary map on $A$ is a $\Gamma$-map: $A \to  C(\partial_F \Gamma)$. 

To see the way these notions are related, suppose that $\Gamma$ is C*-simple and $x\in \partial_F \Gamma$. Then by $\Gamma$-injectivity, $\Gamma_x$ is
amenable,  $C^*_{\lambda_{\Gamma/\Gamma_x}}(\Gamma)=C^*_{\lambda}(\Gamma)$, and the unique boundary map on $C^*_{\lambda_{\Gamma/\Gamma_x}}(\Gamma)$ coincides with the canonical trace on $C^*_{\lambda}(\Gamma)$. In particular, the fixed point set $(\partial_F \Gamma)^s$ has empty interior, for $s\neq e$, i.e., $\partial_F \Gamma$ is a topologically free $\Gamma$-boundary. Conversely, if $\Gamma$ admits  a topologically free boundary, then by  part $(i)$ of the above example, the canonical trace is the unique boundary map on $C^*_{\lambda}(\Gamma)$, which is also
faithful. Hence $C^*_{\lambda}(\Gamma)$ is simple by \cite[Proposition 3.1]{ks} (see, \cite[Corollary 4.5]{ks}, \cite[Theorem 6.2]{kk}).  A similar argument shows that $\Gamma$ is C*-simple iff the canonical
trace is the unique boundary map on $C^*_{\lambda}(\Gamma)$ (see, \cite[Corollary 4.6]{ks}, \cite[Theorem 3.4]{ke}).

The above argument extends to arbitrary topologically free $\Gamma$-boundaries: If $\Gamma$ is C*-simple and  $X$ is
a $\Gamma$-boundary with some amenable point stabilizer $\Gamma_x$, then
$X$ is topologically free.
Indeed, since $\lambda_{\Gamma/\Gamma_x}$ is unitarily equivalent to $\lambda$, the canonical trace
is the unique boundary map map on $C^*_{\lambda_{\Gamma/\Gamma_x}}(\Gamma)$, and so the fixed point set $X^s:=\{x\in X: s\cdot x=x\}$ has empty interior, for $s\neq e$ \cite[Corollary 4.7]{ks}, \cite[Proposition 1.9]{bkko}).  

\subsection{{\bf Groupoid representations}}

Let $X$ be a compact $\Gamma$-space. A {\it groupoid representation} of $(\Gamma, X)$
is a nondegenerate covariant representation $(\pi, \rho)$ of $(\Gamma, C(X))$ such that $\pi_s\rho(f) = \rho(f)$, for $s\in \Gamma,  f\in C(X)$, with ${\rm supp}(f)\subseteq X^s$ (or equivalently, with ${\rm supp}(f)$ contained in the interior of $X^s$).

An example of  a groupoid representation is the covariant pair $(\kappa_\nu, \rho)$, where $\nu$ is a $\sigma$-finite quasi-invariant measure on $X$,  $\kappa_\nu$ is
the Koopman representation of $\Gamma$ on $L^2(X, \nu)$, and $\rho$ is the multiplication representation of $C(X)$ in $L^2(X, \nu)$. 
Another example is the covariant pair $(\lambda_{\Gamma/\Lambda}, P_x)$, where $x\in X$ satisfies $\Gamma_x^0\leq \Lambda\leq \Gamma_x$ and $P_x : C(X) \to B(\ell^2(\Gamma/\Lambda))$ is the Poisson map \cite[Proposition 4.3]{ks}. 

When $X$ is a $\Gamma$-boundary with a groupoid representation $(\pi, \rho)$ of
$(\Gamma, X)$, there exists a  $\Gamma$-map$:  C^*_\pi(\Gamma)\to C(X)$ iff the action  $\Gamma\curvearrowright X$
has Hausdorff germs \cite[Proposition 4.14]{ks}. Indeed, if the action  $\Gamma\curvearrowright X$
has Hausdorff germs the interior $(X^s)^{o}$ is clopen, and the map $\pi(s)\mapsto 1_{(X^s)^{o}}$ maps $\pi(\Gamma)$ inside a copy of $C(X)$ in $C(\partial_F \Gamma)$, which extends to a $\Gamma$-map on $C^*_\pi(\Gamma)$. 

If $(\pi, \rho)$ is a groupoid representation of $(\Gamma, X)$ with integrated representation $\rho\rtimes\pi$ of the maximal crossed product $C(X)\rtimes\Gamma$, then there is a unique boundary map  on the C*-algebra  $C(X)\rtimes_{\rho\rtimes\pi}\Gamma$ generated by the range of $\rho\rtimes\pi$, whose restriction to $C^*_\pi(\Gamma)$  is the
unique boundary map on $C^*_\pi(\Gamma)$. Indeed, by $\Gamma$-injectivity of $C(\partial_F \Gamma)$, there exists a boundary map: $C(X)\rtimes_{\rho\rtimes\pi}\Gamma\to C(\partial_F \Gamma)$. Since any boundary map on $C^*_\pi(\Gamma)$ extends to a boundary map
on $C(X)\rtimes_{\rho\rtimes\pi}\Gamma$, uniqueness of the above map implies uniqueness of its restriction on $C^*_\pi(\Gamma)$ (cf., \cite[Theorem 4.4]{ks}). When this boundary map  on  $C(X)\rtimes_{\rho\rtimes\pi}\Gamma$ is also faithful, then so is its restriction on $C^*_\pi(\Gamma)$. Now since every ideal of the latter is included in the left kernel of a boundary map \cite[Proposition 3.1]{ks}, $C^*_\pi(\Gamma)$ would be  simple in this case. A similar argument based on left kernels shows that  the unique boundary map on $C(X)\rtimes_{\rho\rtimes\pi}\Gamma$ is faithful exactly when $C(X)\rtimes_{\rho\rtimes\pi}\Gamma$ is simple. 

\begin{example}\label{ex2}
	$(i)$ If $X$ is a faithful $\Gamma$-boundary and $(\pi, \rho)$ is a groupoid representation
	of $(\Gamma, X)$, then it follows from Example \ref{ex1}$(iv)$ that $C^*_\pi(\Gamma)$  admits a trace if and only if $X$ is topologically free. In this case, $\lambda\precsim\pi$ and the canonical trace is the unique trace
	on $C^*_\pi(\Gamma)$ (cf., \cite[Corollary 4.9]{ks}).
	
	$(ii)$ As a particular case of $(i)$, if $X$ is a $\Gamma$-boundary, $X$ is topologically free iff $\lambda\precsim \kappa_\nu$, for the Koopman representation $\kappa_\nu$ of some quasi-invariant measure $\nu$ on $X$ (see, \cite [Corollary 4.10]{ks}, \cite[Theorem 31]{r}). 
	
	$(iii)$ If $X$ is a faithful $\Gamma$-boundary,  $N:=\ker(\Gamma \curvearrowright X)$,  and $(\pi, \rho)$ is a groupoid representation
	of $(\Gamma, X)$, then either $C^*_\pi(\Gamma)$
	has no trace or it has a unique trace and a maximal (indeed maximum) closed ideal $I=I_{\rm max}$ such that $C^*_\pi(\Gamma)/I\simeq C^*_{\lambda_{\Gamma/N}}$ is simple. indeed, if $C^*_\pi(\Gamma)$ admits a trace, then the trace has to be unique by Example \ref{ex1}$(iv)$. By the same argument as in 
	Example \ref{ex1}$(iv)$, one could observe that if $\sigma\in$Rep$(\Gamma)$ and $\sigma\precsim\pi$, then $\lambda_{\Gamma/N}\precsim\sigma$. This shows the existence of the maximal closed ideal $I_{\rm max}$.  Finally, since 
	$\Gamma/N$ acts topologically
	free on $X$, boundary action, hence $C^*_{\lambda_{\Gamma/N}}$ is simple by the argument in the second paragraph after Example \ref{ex1} (cf., \cite[Theorem 6.2]{kk}).
	
	$(iv)$ In general, existence of a unique boundary map does not imply simplicity \cite[Remark 4.12]{ks}: For $\Gamma:= PSL_n(\mathbb Z)$, $n\geq 3$, the projective space $\mathbb P(\mathbb R^n)=:X$ is a topologically free $\Gamma$-boundary
	and there is $x\in X$ with $\Gamma_x$ non-amenable. Thus by part $(i)$,  the unique boundary map on $C^*_{\lambda_{\Gamma/\Gamma_x}}$ is the canonical trace, but 
	$C^*_{\lambda_{\Gamma/\Gamma_x}}$  could not be simple, since otherwise then we would have that $\lambda_{\Gamma/\Gamma_x}$ would be unitarily equivalent to $\lambda$, which contradicts the non-amenability of $\Gamma_x$. 
	
	$(v)$ For a compact $\Gamma$-space $X$, following Le Boudec
	and Matte Bon \cite{lb}, we denote the set of continuity points of the stabilizer map Stab$: X\to$ Sub$(\Gamma)$ by $X_0$. We have  $X_0 = \{x \in X : \Gamma_x = \Gamma_x^0\}$ \cite[Lemma 2.2]{lb}. Similarly, the set of continuity points of the open stabilizer map Stab$^0: X\to$ Sub$(\Gamma)$ is denoted by $X_0^0$. Clearly, $X_0\subseteq X_0^0$. 
	When $X$ is a $\Gamma$-boundary with a groupoid representation $(\pi, \rho)$ of
	$(\Gamma, X)$, for each  $x\in X_0^0$, $\lambda_{\Gamma/\Gamma_x^0}\precsim\pi$. Indeed, the composition of a boundary map on $C^*_\pi(\Gamma)$ with point mass at a point $y\in \partial_F \Gamma$ is a state  which restricts to $1_{\Gamma_x^0}$ on $\Gamma$. Now the claim follows as 
	$\delta_{\Gamma_x^0}\in \ell^2(\Gamma/\Gamma_x)$
	is a cyclic vector for $C^*_{\lambda_{\Gamma/\Gamma_x^0}}(\Gamma)$. The same argument works for $\pi$ replaced by any $\sigma\precsim\pi$ \cite[Theorem 5.2]{ks}. In particular, $C^*_{\lambda_{\Gamma/\Gamma_x^0}}(\Gamma)$ is simple for 	 $x\in X_0^0$. The  simplicity of $C^*_{\lambda_{\Gamma/\Gamma_x}}(\Gamma)$ for $x\in\partial_F\Gamma$, observed by Kawabe \cite[Corollary 8.5]{ka}, is a special case of the above observation, since $\Gamma_x=\Gamma_x^0$ for $x\in\partial_F\Gamma$ \cite[Lemma 3.4]{bkko}.
	
	$(v)$ For a compact $\Gamma$-space $X$ and $x\in X$, if $C^*_{\lambda_{\Gamma/\Gamma_x}}(\Gamma)$ is simple then $\lambda_{\Gamma/\Gamma_x}\precsim\lambda_{\Gamma/\Gamma_x^0}$ by \cite[Lemma 5.5]{ks}, that is, $\Gamma/\Gamma_x^0$ is coamenable in $\Gamma/\Gamma_x$. In particular, $\Gamma/\Gamma_x^0$ is amenable. The same argument done backwards shows that if $\Gamma/\Gamma_x^0$ is amenable then $C^*_{\lambda_{\Gamma/\Gamma_x}}(\Gamma)$ is simple.
	
	$(vi)$ An argument similar to that used in part $(iv)$ shows that for  a compact $\Gamma$-boundary $X$ with a  groupoid representation $(\pi, \rho)$, $P_x\rtimes\lambda_{\Gamma/\Lambda} \precsim\rho\rtimes\pi$, when $x\in X_0^0$. In this case,  $C(X)\rtimes_{P_x\rtimes\lambda_{\Gamma/\Lambda}}\Gamma$ is simple, where $P_x : C(X) \to B(\ell^2(\Gamma/\Lambda))$ is the Poisson map (cf., \cite[Corollary 5.11]{ks}). 
	
	$(vii)$ The Thompson’s group $V$ is the set of piecewise linear bijections on the interval [0, 1) which are right continuous, have finitely many points of non-differentiability, all
	being dyadic rationals, and have a derivative which is a power of 2 at each point of
	differentiability. The Thompson’s group $T$ consists of those elements of $V$ which have at
	most one point of discontinuity, and Thompson’s group $F$ consists of those elements
	of $V$ which are homeomorphisms of [0, 1), or equivalently, those elements of $T$ which vanish at 0. Haagerup and Olesen proved that if $T$ is C*-simple, then $F$ has to be non-amenable \cite{ho}. The converse is now known to be true \cite{lb}. Let $Y$ be the set obtained from reals $\mathbb R$ by replacing each $y\in \mathbb Z[1/2]$ by two elements $y_{-}, y_{+},$ endowed  with the order topology. Then $K := Y \cap [0_{+}, 1_{-}]$
	is a Cantor set and $V$ on $K$ by
	$$g\cdot s := g(s),\ g\cdot y_{+}:= g(y)_{+}, \ g\cdot y_{-} := g_{-}(y_{-})_{-},$$
	for $g\in V, s\in (0, 1) \backslash \mathbb Z[1/2]$, and $y\in \mathbb Z[1/2]$, where $g_{-}(y_{-})$ is the left limit of $g$ at $y_{-}$. Now for $\Gamma:=T$, $X=K$,  and $x=0_{+}$, we have $\Gamma_x=F$, and $X$ is a faithful topologically non-free $\Gamma$-boundary, and so Example \ref{ex1}$(iv)$,  $C^*_{\lambda_{T/F}}(T)$
	does not admit traces. Also $\Gamma_x^0= \{g\in F: g'(0) = 1\}$ and  $F/T^0_{0_{+}}\to \mathbb Z$ maps $g$ with $g'(0) = 2^a$ to $a$, and so is an isomorphism. Also, the interior of $g\cdot K$ is clopen by \cite[Proposition 6.1]{ks}. Therefore, 
	$C^*_{\lambda_{T/F}}(T)$ is simple by part $(v)$ (cf., \cite[Theorem 6.2]{ks}). Similarly, one can observe that for the $V$-invariant set $X := [0, 1) \cap \mathbb  Z[1/2]$ and the corrsponding unitary representation $\pi$ of $V$ on $\ell^2(X)$, $C^*_\pi(\Gamma)$  is simple and admits no traces. Finally, identifying $\mathbb S^1$ with [0, 1), we get a boundary action of $T$ such that
	the open stabilizer of 0 is the commutator subgroup $[F,F]=:F_2$ \cite[Theorem 4.1]{cfp}. Since $F_2$ is coamenable in $F$, $\lambda_{T/F}\precsim\lambda_{T/F_2}$, but one can show that these two representations are not unitarily equivalent \cite[Example 6.5]{ks}, therefore, $C^*_{\lambda_{T/F_2}}(T)$ is not simple.
\end{example}

More general results along the line of Example \ref{ex2}$(iv)$ could be obtained. Let $X$ be a compact $\Gamma$-space. Let $\partial(\Gamma, X)$ be  the spectrum of the $\Gamma$-injective envelope $I_{\Gamma}(C(X))$, and $b_X : \partial(\Gamma, X)\to  X$ be the
canonical continuous equivariant map induced by the embedding of $C(X)$ in its $\Gamma$-injective envelope. When $X$ is a singleton, $\partial(\Gamma, X)=\partial_F\Gamma$ is the
Furstenberg boundary  of $\Gamma$ \cite[Theorem 3.11]{kk}. When the action $\Gamma \curvearrowright X$ is minimal, there is a unique equivariant conditional expectation: $C(X)\rtimes \Gamma\to C(X)$ restricting to the canonical trace on $C^*_r(\Gamma)$ iff the action $\Gamma \curvearrowright \partial(\Gamma, X)$ is  faithful iff no stabilizer $\Gamma_x$  contain a non-trivial amenable normal subgroup iff some open stabilizer  $\Gamma_x^0$ contains no non-trivial amenable normal subgroup \cite[Theorem 4.6]{ks2}.

\begin{example}\label{ex3}
	
	$(i)$ If $X$ is a minimal compact $\Gamma$-space and $\Gamma x^0$ is amenable for
	some $x\in X$, then the action $\Gamma \curvearrowright \partial(\Gamma, X)$ is  faithful iff  $\Gamma \curvearrowright X$ is  faithful. Indeed, if $\Gamma \curvearrowright X$ is not faithful its kernel is a non-trivial amenable normal subgroup of $\Gamma$, thus the action $\Gamma \curvearrowright \partial(\Gamma, X)$ is not faithful by the above result. The converse implication is immediate. 
	
	$(ii)$ If $X$ is a minimal locally compact $\Gamma$-space, $x\in X$ and
	$y\in b_X^{-1}(x)$, then $$ \Lambda:= \{s \in\Gamma: y \ {\rm is \ in \ the \ closure \ of \ } b_X^{-1}\big((X^s)^{o}\big)\}$$ is a subgroup with $\Gamma_x^0\leq\Lambda\leq\Gamma_x$ and  every groupoid representation is weakly contained in 
	$(\lambda_{\Gamma/\Lambda}, P_x)$, and so . In particular, $C_0(X)\rtimes_{P_x\lambda_{\Gamma/\Lambda}}\Gamma$ is simple. In particular,  $C_0(X)\rtimes_{P_x\rtimes\lambda_{\Gamma/\Gamma_x^0}}\Gamma$ is simple for $x\in X_0^0$ (cf., \cite[Proposition 5.1]{ks2}). This extends Example \ref{ex2}$(vi)$. 

\end{example}

\section{Non commutative boundaries} \label{ncbdry}

In this section we extends the results of previous section for the $\Gamma$-boundary $X$ replaced by a (unital) $C^*$-algebra $A$ with a $\Gamma$-action. 

A (unital) $C^*$-algebra $A$ is called a $\Gamma$-$C^*$-{\it algebra} if there is an action $\Gamma \curvearrowright A$ by $*$-automorphisms. We call a linear
map $\phi: A \to B$ between $\Gamma$-$C^*$-algebras is a $\Gamma$-{\it map} if it is unital completely positive
(u.c.p.), and $\Gamma$-equivariant.

A $\Gamma$-$C^*$-algebra $A$ is called $\Gamma$-{\it injective} if, given a completely isometric $\Gamma$-map
$\psi : B \to C$ and a $\Gamma$-map $\phi: B \to A$, there is a $\Gamma$-map $\rho: C \to A$ with $\rho\circ\psi=\phi$.
A linear map $\psi : A \to B$ between $C^*$-algebras is  {\it faithful} if $\psi(a^*a) = 0$ implies $a = 0$, for $a\in A$.

Recall from the previous section that a $\Gamma$-$C^*$-algebra $A$ is {\it proximal} if the corresponding action on the state space $\mathcal S(A)$ is proximal, i.e., given states $\omega_1,\omega_2\in\mathcal S(A)$, there is a net $(t_i)$ in $\Gamma$ with $\lim_i t_i\omega_1= \lim_i t_i\omega_2$, and  it is {\it minimal} if for each pure state $\omega\in \mathcal P(A)$, the orbit $\{s\omega: s\in\Gamma\}$ is dense in $\mathcal P(A)$. 

When $\Gamma$ acts on a convex subset $K$ of a complex vector space by affine transformations, we say that $K$ is a {\it affine $\Gamma$-space}. When $K$ contains no proper affine $\Gamma$-subspace, it is called minimal. This notion of minimality is equivalent to the above notion defined by point masses \cite{gl2}. In particular, a $\Gamma$-$C^*$-algebra $A$ is minimal iff its pure state space $\mathcal P(A)$ has  no proper affine $\Gamma$-subspace. 
An affine $\Gamma$-space $K$ is said to be 
{\it irreducible} if it contains no proper closed convex $\Gamma$-invariant set.

\begin{definition} \label{bd1}
A $\Gamma$-$C^*$-algebra $A$ is called a (non commutative) $\Gamma$-boundary if its minimal and proximal. 	
\end{definition} 

Note that for a compact $\Gamma$-space $X$, proximality of $C(X)$ in the above sense means that $\Gamma$ acts proximal on the space $P(X)$ of probability measures on $X$, which is called strong proximality (for $X$) in the classical dynamics.

\begin{definition} \label{bd2}
	Let $\pi\in{\rm Rep}(G)$. A $\Gamma$-$C^*$-algebra $A$ is called a  $\pi$-boundary if it there is a $\Gamma$-embedding $\phi: A\to \mathcal{B}_{\pi}$. 	
\end{definition}

If $A$ is  a  $\pi$-boundary,  each  $\Gamma$-map $\phi: A\to \mathcal{B}_{\pi}$,  induces a surjection $\phi_{*}:  \mathcal S(\mathcal{B}_{\pi})\to \mathcal S(A)$ (since $\phi$ is u.c.p.). We say that $A$ is  a  {\it regular} $\pi$-boundary  if $\phi_{*}$ maps pure states to pure states, for each each  $\Gamma$-map $\phi$. A particular case where this holds for a map $\phi$ is when $A$ is a hereditary $C^*$-subalgebra of $\mathcal{B}_{\pi}$ (under Choi-Effros product) and $\phi: A\hookrightarrow \mathcal{B}_{\pi}$ is the inclusion map (in this case, since each state on $A$ has a {\it unique} extension to  a state on $\mathcal{B}_{\pi}$ \cite[Theorem 3.3.9]{di}, the restriction map $\phi_{*}$ sends pure states to pure states). Also by $\pi$-rigidity,  $\mathcal{B}_{\pi}$ itself is a regular $\pi$-boundary. A $\pi$-boundary $A$ is called {\it embeddable} if there is a $\Gamma$-equivariant
injective $*$-homomorphism: $A\to  \mathcal{B}_\pi$ (compare to Remark \ref{homo}). In this case, $A$ could be regarded as a $C^*$-subalgebra of $\mathcal{B}_{\pi}$ (this does not imply that $A$ is regular, since this copy is not necessarily a hereditary $C^*$-subalgebra).

By Proposition \ref{mp}, the Furstenberg-Hamana boundary $\mathcal{B}_{\pi}$ is also $\Gamma$-boundary. The next lemma relates two notions of boundary.

\begin{lemma} \label{bd} For the left regular representation $\lambda$:
	
	$(i)$ The class of regular $\lambda$-boundaries is the same as the class of $\Gamma$-boundaries.
	
	$(ii)$ Commutative $\Gamma$-boundaries are exactly $C^*$-algebras of the form $C(X)$, with $X$ a $\Gamma$-boundary in the classical sense. These boundaries are automatically regular and embeddable.
\end{lemma}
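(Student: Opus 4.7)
The plan is to handle part (ii) first and then bootstrap part (i) from it. The central tools are Gelfand duality for commutative $C^*$-algebras, the universal property of the Furstenberg boundary, the $\Gamma$-injectivity and $\lambda$-rigidity of $\mathcal{B}_\lambda = C(\partial_F\Gamma) = I_\Gamma(\mathbb{C})$, and standard pure-state extension for operator systems.

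For part (ii), the identifications $\mathcal{P}(C(X)) = X$ and $\mathcal{S}(C(X)) = P(X)$ translate the conditions of Definition \ref{pmf} precisely into classical minimality and strong proximality of $\Gamma \curvearrowright X$: specialising the proximality pair to a Dirac measure and a general probability, and using compactness of $X$, recovers Furstenberg strong proximality, and the reverse direction is immediate. For embeddability I invoke the universality of $\partial_F\Gamma$ among classical $\Gamma$-boundaries: the continuous $\Gamma$-equivariant surjection $\partial_F\Gamma \twoheadrightarrow X$ dualises to an injective $\Gamma$-equivariant $*$-homomorphism $\iota : C(X) \hookrightarrow C(\partial_F\Gamma) = \mathcal{B}_\lambda$. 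Regularity is then automatic: for any $\Gamma$-map $\phi : C(X) \to \mathcal{B}_\lambda$, $\Gamma$-injectivity of $\mathcal{B}_\lambda$ extends $\phi$ along $\iota$ to a $\Gamma$-map $\tilde\phi : \mathcal{B}_\lambda \to \mathcal{B}_\lambda$ with $\tilde\phi\circ\iota = \phi$, and the $\lambda$-rigidity in property $(iv)$ of Section \ref{bdry} forces $\tilde\phi = \mathrm{id}$, so $\phi = \iota$. In particular $\phi$ is a $*$-homomorphism and its adjoint sends each point evaluation on $\partial_F\Gamma$ to a point evaluation on $X$, so $\phi_{*}$ preserves pure states.

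For part (i), the forward direction is routine: given the $\Gamma$-embedding $\phi: A \hookrightarrow \mathcal{B}_\lambda$ witnessing that $A$ is a regular $\lambda$-boundary, regularity produces a continuous $\Gamma$-equivariant map $\phi_{*} : \partial_F\Gamma \to \mathcal{P}(A)$. Each $\omega\in\mathcal{P}(A)$ extends to a pure state of $\mathcal{B}_\lambda$ by the pure-state extension theorem applied to the operator subsystem $\phi(A)\subseteq \mathcal{B}_\lambda$, so $\phi_{*}$ is surjective onto $\mathcal{P}(A)$. Minimality and proximality of $\partial_F\Gamma$ (which hold by Proposition \ref{mp} applied to $\pi = \lambda$, using $\ell^\infty(\Gamma)\hookrightarrow B(\ell^2(\Gamma))$) then push forward through the continuous $\Gamma$-equivariant $\phi_{*}: \mathcal{S}(\mathcal{B}_\lambda)\to \mathcal{S}(A)$ to give minimality and proximality of $A$.

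For the backward direction of (i), assume $A$ is a $\Gamma$-boundary. Applying $\Gamma$-injectivity of $\mathcal{B}_\lambda = I_\Gamma(\mathbb{C})$ to the scalar inclusion $\mathbb{C} \hookrightarrow A$ yields a $\Gamma$-map $\phi : A \to \mathcal{B}_\lambda$. The pushforward $F := \phi_{*}(\partial_F\Gamma) \subseteq \mathcal{S}(A)$ is non-empty, compact and $\Gamma$-invariant; proximality of $A$ forces every orbit-closure in $\mathcal{S}(A)$ to meet $F$, and minimality of $A$ then forces $F$ to contain a dense, hence (by compactness of $F$) all of $\mathcal{P}(A)$. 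The main obstacle is to upgrade this to the full assertion that $\phi$ is completely isometric and that $F\subseteq \mathcal{P}(A)$ (that is, regularity). I would handle this by arguing that the proximality hypothesis on $\mathcal{S}(A)$ severely constrains the multiplicative domain of $\phi$: because $\mathcal{B}_\lambda$ is commutative, the image $\phi(A)$ must be concentrated in a $\Gamma$-invariant commutative subalgebra, and an application of $\lambda$-rigidity as in part (ii) (now to the inclusion of that subalgebra) shows $\phi$ is a $\Gamma$-equivariant $*$-homomorphism onto its image. Identifying $A$ with $C(\widehat{A})$ then reduces the c.i. and pure-state-preservation claims to the commutative case already handled in (ii). This commutativity/multiplicative-domain step is the crux and the place I expect the most work; it ultimately rests on combining the rigidity of $\mathcal{B}_\lambda$ with the fact that state-space proximality is incompatible with the existence of non-scalar $\Gamma$-invariant elements in any quotient of $A$.
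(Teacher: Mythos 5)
Your part (ii) and the forward direction of part (i) are sound and essentially follow the paper's route: the dictionary $\mathcal P(C(X))=X$, $\mathcal S(C(X))=P(X)$ handles (ii), and in (i) one pushes minimality and proximality of $\partial_F\Gamma$ forward through $\phi_*$ after using pure-state extension to see that $\phi_*$ maps $\partial_F\Gamma$ \emph{onto} $\mathcal P(A)$. Your rigidity argument for regularity in (ii) --- extend an arbitrary $\Gamma$-map $\phi:C(X)\to\mathcal B_\lambda$ along the canonical embedding $\iota$ by $\Gamma$-injectivity and invoke property $(iv)$ of Section \ref{bdry} to force the extension to be the identity, hence $\phi=\iota$ --- is a genuine (and welcome) variant: the paper simply cites Furstenberg for regularity, whereas your argument also discharges the ``for every $\Gamma$-map'' quantifier in the definition of regularity.

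The backward direction of part (i) is where you diverge from the paper, and your route has a real gap. You obtain $\phi:A\to\mathcal B_\lambda$ by extending the scalar inclusion $\mathbb C\hookrightarrow\mathcal B_\lambda$ along $\mathbb C\hookrightarrow A$; such a $\phi$ is an arbitrary u.c.p.\ map carrying no structural information, and you must then prove a posteriori that it is a completely isometric, pure-state-preserving $\Gamma$-embedding. The mechanism you propose --- that proximality constrains the multiplicative domain and that commutativity of $\mathcal B_\lambda$ forces $\phi$ to be a $*$-homomorphism onto a $\Gamma$-invariant subalgebra --- does not establish this: a u.c.p.\ map into a commutative C*-algebra is in general nowhere near multiplicative (every state $A\to\mathbb C\subseteq\mathcal B_\lambda$ is such a map), and nothing in the proximality of $\Gamma\curvearrowright\mathcal S(A)$ controls the multiplicative domain of an abstract injective extension. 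There is also a smaller slip earlier in that paragraph: proximality shows that every orbit closure in $\mathcal S(A)$ meets the closed $\Gamma$-invariant set $F=\phi_*(\partial_F\Gamma)$, but minimality as defined only concerns orbits of \emph{pure} states, so you cannot conclude $\omega\in F$ from $\overline{\Gamma\omega}\cap F\neq\emptyset$ unless the point of $F$ you land on is itself pure, which you have not arranged.

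The paper avoids all of this by building the map explicitly rather than abstractly: it applies the universality of the Furstenberg boundary to the classical $\Gamma$-boundary $\mathcal P(A)$ (minimal and strongly proximal by hypothesis) to get a $\Gamma$-surjection $\theta:\partial_F\Gamma\to\mathcal P(A)$, and then composes the evaluation map $A\to C(\mathcal P(A))$, $a\mapsto\hat a$ --- automatically u.c.p.\ because positive maps into commutative C*-algebras are completely positive --- with $\theta^*:C(\mathcal P(A))\to C(\partial_F\Gamma)=\mathcal B_\lambda$. The resulting $\Gamma$-map visibly sends point evaluations on $\partial_F\Gamma$ to the pure states $\theta(x)$ of $A$, which is the regularity assertion. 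You should replace your injectivity-plus-multiplicative-domain argument by this construction.
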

\begin{proof}
	$(i)$ We know that $\mathcal{B}_{\lambda}=C(\partial_F\Gamma)$, where $\partial_F\Gamma$ is the Furstenberg boundary of $\Gamma$ \cite[Example 3.7]{bek}. If $A$ is  a regular  $\lambda$-boundary with $\Gamma$-embedding $\phi: A\to C(\partial_F\Gamma)$, then the induced  $\Gamma$-surjection $\phi_{*}:  P(\partial_F\Gamma)\to \mathcal S(A)$ sends $\partial_F\Gamma$ to $\mathcal P(A)$, that is,  $\mathcal P(A)$ is a factor of $\partial_F\Gamma$, and so is minimal and proximal. Thus $A$ is a $\Gamma$-boundary in the sense of Definition \ref{bd1}. 
	
	Conversely if $A$ is a $\Gamma$-boundary, then by universality of the Furstenberg boundary (cf., \cite[Proposition 2.4]{gl2}), there is a $\Gamma$-surjection $\theta:  X:=\partial_F\Gamma\to Y:=\mathcal P(A)$. The induced map $\theta^*: C(Y)\to C(X); f\mapsto f\circ \theta$ is a $\Gamma$-map. On the hand, the Gelfand map $A\to C(Y); a\mapsto \hat a$ is linear, positive, and unital, and so u.c.p.,  since positive maps into commutative $C^*$-algebras are automatically c.p. (see for instance, \cite[Theorem 3.9]{p}). This  map is also  $\Gamma$-equivariant, 
	\begin{align*}\langle (s\cdot a)\hat{}, \omega\rangle &=\langle \omega,s\cdot a\rangle=\langle s^{-1}\omega,a\rangle
		=\langle \hat a, s^{-1}\omega\rangle
		=\langle s\cdot \hat a, \omega\rangle,
	\end{align*}
	and so a $\Gamma$-map. Composing these two maps, we get a $\Gamma$-map: $A\to C(X)=\mathcal{B}_{\lambda}$, which means that $A$ is a $\lambda$-boundary. Finally the induced map: $\mathcal S(\mathcal{B}_{\pi})\to \mathcal S(A)$ clearly sends $X=\partial_F\Gamma$ to $Y=\mathcal P(A)$, i.e., $A$ is also regular.
		
	$(ii)$ The first assertion follows from $(i)$ and the fact that  a compact $\Gamma$-space $X$ is strongly proximal iff the weak$^{*}$-compact $\Gamma$-space $P(X)$ of probability measures on $X$ is proximal. Since $\Gamma$-embeddings in this case are: $C(X)\hookrightarrow C(\partial_F\Gamma); f\mapsto f\circ \theta,$ for a $\Gamma$-surjection $\theta:  \partial_F\Gamma\to X$,  embeddability is automatic. Finally, reqularity follows from \cite[Proposition 4.2]{f2}.\qed    	
\end{proof}	

\begin{lemma} \label{unique} 
	If $A$ is a regular $\pi$-boundary, then there is a unique $\Gamma$-map: $A\to \mathcal B_\pi$. 
\end{lemma}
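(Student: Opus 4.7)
The plan is to exploit regularity via a simple averaging argument. Suppose $\phi_1, \phi_2 : A \to \mathcal{B}_\pi$ are two $\Gamma$-maps. Their convex combination $\phi := \frac{1}{2}(\phi_1 + \phi_2)$ is again unital (as each $\phi_i$ is), completely positive (the c.p.\ maps form a convex cone), and $\Gamma$-equivariant (by linearity of the $\Gamma$-action), hence itself a $\Gamma$-map $A \to \mathcal{B}_\pi$. Since $A$ is a \emph{regular} $\pi$-boundary, the induced map $\phi_{*} : \mathcal{S}(\mathcal{B}_\pi) \to \mathcal{S}(A)$ sends pure states to pure states; in particular, for every $\omega \in \mathcal{P}(\mathcal{B}_\pi)$, $\phi_{*}(\omega) \in \mathcal{P}(A)$.

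Now fix an arbitrary $\omega \in \mathcal{P}(\mathcal{B}_\pi)$. We have
\[
\phi_{*}(\omega) \;=\; \omega \circ \phi \;=\; \tfrac{1}{2}\bigl((\phi_1)_{*}(\omega) + (\phi_2)_{*}(\omega)\bigr),
\]
and this is by the previous paragraph a pure state of $A$. Since pure states are extreme points of $\mathcal{S}(A)$, this forces $(\phi_1)_{*}(\omega) = (\phi_2)_{*}(\omega)$, i.e., $\omega(\phi_1(a) - \phi_2(a)) = 0$ for every $a \in A$. By Krein-Milman the weak$^{*}$-closed convex hull of $\mathcal{P}(\mathcal{B}_\pi)$ is the full state space $\mathcal{S}(\mathcal{B}_\pi)$, so the same vanishing holds for every state $\omega$ on $\mathcal{B}_\pi$. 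Because states separate points, $\phi_1(a) = \phi_2(a)$ for every $a \in A$, and hence $\phi_1 = \phi_2$.

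The only subtlety worth flagging is that regularity, as defined, is a property of the pair $(A,\pi)$ and therefore applies to \emph{every} $\Gamma$-map $A \to \mathcal{B}_\pi$ — including the convex combination $\phi$ that we form. No deeper structural features of $\mathcal{B}_\pi$ (minimality, proximality, or $\pi$-rigidity from Proposition \ref{mp}) need to be invoked; the proof reduces purely to the extremality characterization of pure states together with the fact that pure states separate points of a $C^{*}$-algebra.
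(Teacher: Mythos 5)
Your proof is correct and follows essentially the same route as the paper's: average the two $\Gamma$-maps, observe that regularity forces the averaged pullback of a pure state to be pure, conclude equality on pure states by extremality, and finish because pure states separate points of $\mathcal{B}_\pi$. Your explicit remark that regularity applies to the convex combination (not just to embeddings) is exactly the point the paper's wording glosses over, so nothing further is needed.
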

\begin{proof}
	If $A$ is a regular $\Gamma$-boundary, given two $\Gamma$-maps $b_i: A\to \mathcal B_\pi$, $i=1,2$, by regularity, we get $\Gamma$-maps $b_{i*}: \mathcal P(\mathcal B_\pi)\to \mathcal P(A)$. By convexity of pure state space, $\gamma:=\frac{1}{2}(b_{1*}+b_{2*})$ is also a $\Gamma$-map into $\mathcal P(A)$. By the fact that pure states are extreme points in the state space for unital C*-algebras \cite[Proposition 2.5.5]{di}, we get $b_{1*}=b_{2*}$, that is, $\omega\circ b_{1}=\omega\circ b_{2}$, for each $\omega\in P(\mathcal B_\pi)$, thus $b_1=b_2$, as pure states separate the points in the  unital C*-algebra $\mathcal B_\pi$.   .\qed    	
\end{proof}	

The next lemma extends \cite[Proposition 3.1]{ke} (compare also to Lemma \ref{1to1}).

\begin{lemma} \label{1-1}
There is a one-one correspondence between $\Gamma$-maps: $C^*_\pi(\Gamma)\to \mathcal{B}_{\pi}$ and $\pi$-boundaries  $A$ which are isomorphic (as a $\Gamma$-$C^*$-algebra) to a quotient of $C^*_\pi(\Gamma)$. In this correspondence, regular $\pi$-boundaries correspond to regular $\Gamma$-maps. 
\end{lemma}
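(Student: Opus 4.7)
The plan is to construct mutually inverse assignments $\phi \leftrightarrow A$ and then to verify the regularity refinement at the end.

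For the easy direction, given a $\pi$-boundary $A$ isomorphic as a $\Gamma$-$C^*$-algebra to a quotient $C^*_\pi(\Gamma)/J$ via a surjective $\Gamma$-equivariant $*$-homomorphism $q:C^*_\pi(\Gamma)\twoheadrightarrow A$ together with the $\Gamma$-embedding $\iota_A:A\hookrightarrow \mathcal{B}_\pi$ provided by Definition \ref{bd2}, I would simply set $\phi_A:=\iota_A\circ q$. As the composition of a $\Gamma$-equivariant surjective $*$-homomorphism with a $\Gamma$-embedding, $\phi_A$ is visibly a $\Gamma$-map.

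For the harder direction, given a $\Gamma$-map $\phi:C^*_\pi(\Gamma)\to \mathcal{B}_\pi$, the central technical step is to show that $\phi$ is itself a $*$-homomorphism with respect to the Choi--Effros product on $\mathcal{B}_\pi$. I would extend $\phi$, using the $\pi$-injectivity property (v) of Section \ref{bdry}, to a $\Gamma$-map $\widetilde{\phi}:B(H_\pi)\to \mathcal{B}_\pi$, and then invoke $\pi$-rigidity (property (iv)) to conclude that $\widetilde{\phi}|_{\mathcal{B}_\pi}=\mathrm{id}_{\mathcal{B}_\pi}$, forcing $\widetilde{\phi}$ to coincide with the canonical minimal $\Gamma$-projection $\phi_0:B(H_\pi)\to \mathcal{B}_\pi$. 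A multiplicative-domain argument then shows that each unitary $\pi_s$ lies in $M_{\phi_0}$: since $\pi_s^*\pi_s=\pi_s\pi_s^*=1$ and $\phi_0$ is unital, the Kadison--Schwarz inequality applied to $\phi_0$ must saturate at $\pi_s$. Because $M_{\phi_0}$ is a $*$-subalgebra of $B(H_\pi)$, this yields $C^*_\pi(\Gamma)\subseteq M_{\phi_0}$, so $\phi=\phi_0|_{C^*_\pi(\Gamma)}$ is a $*$-homomorphism. Setting $A_\phi:=\phi(C^*_\pi(\Gamma))$ then produces a $\Gamma$-invariant $C^*$-subalgebra of $\mathcal{B}_\pi$, $\Gamma$-isomorphic to $C^*_\pi(\Gamma)/\ker\phi$, and the inclusion $A_\phi\hookrightarrow \mathcal{B}_\pi$ exhibits $A_\phi$ as a $\pi$-boundary.

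Mutual inversion is then a routine verification: $A_{\phi_A}=\iota_A(q(C^*_\pi(\Gamma)))\cong A$ as $\Gamma$-$C^*$-algebras, and $\phi_{A_\phi}$ recovers $\phi$ upon corestricting $\phi$ to its image and composing with the inclusion. For the regularity clause, a $\pi$-boundary $A$ is regular precisely when the adjoint $\iota_A^*:\mathcal{S}(\mathcal{B}_\pi)\to\mathcal{S}(A)$ sends pure states to pure states; since the adjoint of a surjective $*$-homomorphism always preserves pure states, regularity of $A$ is equivalent to $\phi_A^*=q^*\circ\iota_A^*$ mapping $\mathcal{P}(\mathcal{B}_\pi)$ into $\mathcal{P}(C^*_\pi(\Gamma))$, which is the analogous notion of a regular $\Gamma$-map. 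The main obstacle is the multiplicative-domain step forcing $\phi$ to be a $*$-homomorphism; once each $\phi(\pi_s)$ is known to be unitary in the Choi--Effros product, the rest of the argument reduces to bookkeeping.
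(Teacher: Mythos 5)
Your easy direction (composing a quotient map with the $\Gamma$-embedding) and the regularity bookkeeping are fine and agree with the paper. The problem is the central step of your harder direction: the claim that the Kadison--Schwarz inequality ``must saturate'' at each $\pi_s$ because $\pi_s$ is unitary and the extension $\widetilde\phi$ is unital. Unitality and unitarity give only $\widetilde\phi(\pi_s)^*\widetilde\phi(\pi_s)\leq \widetilde\phi(\pi_s^*\pi_s)=1$; equality is exactly the statement that $\pi_s$ lies in the multiplicative domain, which is what you are trying to prove and which is false in general. A concrete counterexample sits at the heart of this paper: take $\pi=\lambda$ for a C*-simple group and let $\phi$ be the canonical trace followed by the unital inclusion $\mathbb C\hookrightarrow C(\partial_F\Gamma)=\mathcal B_\lambda$. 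This is a perfectly good $\Gamma$-map, yet $\phi(\lambda_s)=0$ for all $s\neq e$ --- nowhere near unitary, and $\phi$ is nowhere near a $*$-homomorphism into $\mathcal B_\lambda$. If your argument were valid it would force every trace to be multiplicative, contradicting Proposition \ref{1-1 cor} and the entire boundary-map formalism. (A secondary, smaller issue: $\pi$-rigidity only gives $\widetilde\phi|_{\mathcal B_\pi}=\mathrm{id}$, i.e.\ that $\widetilde\phi$ is \emph{some} $\Gamma$-projection onto $\mathcal B_\pi$; it does not identify $\widetilde\phi$ with a fixed canonical $\phi_0$.)

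The paper avoids this entirely: it does not claim $\phi$ is multiplicative for the C*-structure of $\mathcal B_\pi$. Instead it takes the image $A:=\phi(C^*_\pi(\Gamma))$ as an operator system and equips it with a \emph{new} (Choi--Effros-type) product induced by the surjection $\phi$, with respect to which $\phi$ becomes a surjective $*$-homomorphism onto $A$ and the inclusion $A\hookrightarrow\mathcal B_\pi$ is merely a $\Gamma$-embedding of operator systems, which is all that Definition \ref{bd2} requires of a $\pi$-boundary. So the correspondence is between $\Gamma$-maps and images-with-their-own-product, not between $\Gamma$-maps and $C^*$-subalgebras of $\mathcal B_\pi$. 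You should drop the multiplicative-domain argument and instead transfer the product along $\phi$ as the paper does.
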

\begin{proof}
For each $\Gamma$-map $\phi: C^*_\pi(\Gamma)\to \mathcal{B}_{\pi}$, we claim that $A:=\phi(C^*_\pi(\Gamma))$ is a  $\pi$-boundary. We endow $A$ with the Choi-Effros product coming from the surjective u.c.p. map $\phi: C^*_\pi(\Gamma)\to A$. Then $A$ is a unital $\Gamma$-$C^*$-algebra and $\phi$ is a surjective $*$-homomorphism. Since $\phi$ is a $\Gamma$-map, the inclusion $\iota: A\hookrightarrow \mathcal{B}_{\pi}$ is a $\Gamma$-embedding. Also, $\iota$ is regular iff $\phi$ is so. 

Conversely, if $A$ is a  $\pi$-boundary with $\Gamma$-embedding $\phi: A\to \mathcal{B}_{\pi}$, and $q: C^*_\pi(\Gamma)\to A$ is a quotient map (and a $\Gamma$-map), then   $\phi\circ q: C^*_\pi(\Gamma)\to \mathcal{B}_{\pi}$ is a $\Gamma$-map. If $\phi$ is regular, so is $\phi\circ q$, since the induced map $q_{*}: \mathcal S(A)\to \mathcal S(C^*_\pi(\Gamma))$ sends pure states to pure states. \qed	
\end{proof}	

If $\pi\in{\rm Rep}(\Gamma)$, then for each (unital) $C^*$-subalgebra $A$ of $B(H_\pi)$, $\Gamma$ acts on $A$ by Ad$_\pi$.  From now on, whenever $A\subseteq B(H_\pi)$, we regard $A$ as a $\Gamma$-$C^*$-algebra under Ad$_\pi$-action.

\begin{remark}  \label{homo}
	
	$(i)$ In \cite[Proposition 3.1]{ke}, Kennedy showed that there is a  one-one correspondence between classical $\Gamma$-boundaries in the state space
$\mathcal S(C^*_\lambda(\Gamma))$ taking a classical $\Gamma$-boundary $X\subseteq \mathcal S(C^*_\lambda(\Gamma))$
to the boundary map $b_\lambda : C^*_\lambda(\Gamma) \to C(\partial_F\Gamma)$, given by $b_\lambda(a)(x) :=\langle b_X(x), a\rangle$, for $a\in C^*_\lambda(\Gamma)$ 
and $x\in \partial_F\Gamma$, where $b_X: \partial_F\Gamma\to X$ is the canonical $\Gamma$-surjection.
In our (more general) setting, the classical $\Gamma$-boundary $X\subseteq \mathcal S(C^*_\lambda(\Gamma))$ is replaced by a regular $\pi$-boundaries  $A$ which is a quotient of $C^*_\pi(\Gamma)$, which is sent under correspondence to the boundary map $b_\pi : C^*_\pi(\Gamma) \to \mathcal{B}_{\pi}$, given by $b_\pi=\phi\circ q$, where  $\phi: A\to \mathcal{B}_{\pi}$ is a $\Gamma$-embedding and $q: C^*_\pi(\Gamma)\to A$ is a quotient map.

$(ii)$ By \cite[Remark 6.3]{kk}, if there is a $\Gamma$-map: $A\to \mathcal{B}_{\pi}$, then there is a $\Gamma$-equivariant
injective $*$-homomorphism: $A\to  \mathcal{B}_{\pi}^{**}$.

$(iii)$ Since $\Gamma$ acts on $C^*_\pi(\Gamma)$ by  Ad$_\pi$, any ideal $I\unlhd C^*_\pi(\Gamma)$ is automatically $\Gamma$-invariant, thus the quotient is a $\Gamma$-$C^*$-algebra. However, there is not necessarily a $\Gamma$-map from   $C^*_\pi(\Gamma)/I$ to $B(H_\pi)$, thus only {\it some} quotients of $C^*_\pi(\Gamma)$ could be isomorphic to $\pi$-boundaries, as in Lemma \ref{1-1}. This pathology does not happen for $\lambda$, since $C(\partial_F G)=I_\Gamma(\mathbb C)$, and so any $\Gamma$-operator system has a $\Gamma$-map into $C(\partial_F G)$ by $\Gamma$-injectivity. For $\pi$ however, $\mathcal{B}_{\pi}$ is relative $\Gamma$-injective envelop of $\mathbb C$ inside $B(H_\pi)$, and so only $\Gamma$-operator systems inside $B(H_\pi)$ have a $\Gamma$-map into $\mathcal{B}_{\pi}$ (by $\pi$-injectivity).       
\end{remark}

Given a unital $\Gamma$-$C^*$-algebra $A$, the state space $\mathcal S(A)$ is compact in the weak$^{*}$-topology. For $\omega\in \mathcal S(A)$, the stabilizer and open stabilizer subgroups of $\omega$ are $$\Gamma_\omega := \{t\in\Gamma:
t\omega = \omega\}, \ \ \Gamma^0_\omega := \{t\in\Gamma: t \ {\rm fixes\ an\ open\ neighborhood\ of\ }\omega\},$$ with $\Gamma^0_\omega\unlhd\Gamma_\omega$.  The stabilizer and open stabilizer
maps are defined by
$${\rm Stab}:\mathcal S(A)\to {\rm  Sub}(\Gamma);\ \omega\mapsto \Gamma^0_\omega, \ \ \ {\rm Stab}^0:  \mathcal S(A)\to {\rm  Sub}(\Gamma); \ \omega\mapsto \Gamma^0_\omega,$$ where ${\rm  Sub}(\Gamma)$ is endowed with the Chabauty
topology (the restriction of the product topology on $\{0, 1\}^\Gamma$). We say that  the action $\Gamma\curvearrowright A$
has {\it Hausdorff germs} if the open stabilizer
map Stab$^0$ is continuous on the pure state space $\mathcal P(A)$. The stabilizer map is continuous at a state  $\omega$ if $\Gamma_\omega=\Gamma^0_\omega$ \cite[Lemma 2.2]{lb}.

Remark \ref{homo}$(iii)$ motivates the following terminology: an ideal $I\unlhd C^*_\pi(\Gamma)$ is called a {\it $\pi$-ideal} if there is a $\Gamma$-embedding of   $C^*_\pi(\Gamma)/I$ in $B(H_\pi)$. By $\Gamma$-injectivity of $C(\partial_F G)$, every ideal $I\unlhd C^*_\lambda(\Gamma)$ is automatically a $\lambda$-ideal.

\begin{proposition} \label{sq}
	Given a $\Gamma$-map $\phi: C^*_\pi(\Gamma)\to \mathcal{B}_{\pi}$,  the left kernel $I_\phi :=
	\{x\in C^*_\pi(\Gamma): \phi(x^*x) = 0\}$ is a closed $\pi$-ideal of $C^*_\pi(\Gamma)$. Conversely, for every proper closed $\pi$-ideal
	$J\unlhd C^*_\pi(\Gamma)$, there is a  $\Gamma$-map $\psi: C^*_\pi(\Gamma)\to \mathcal{B}_{\pi}$ such that $J\subseteq I_\psi$.
	In particular, if there is a  unique $\Gamma$-map $\phi: C^*_\pi(\Gamma)\to \mathcal{B}_{\pi}$, then $I_\phi$ contains all
	proper closed $\pi$-ideals of $C^*_\pi(\Gamma)$  and $C^*_\pi(\Gamma)/I_\phi$  is the unique non-zero  quotient of $C^*_\pi(\Gamma)$ with no $\pi$-ideal. 
\end{proposition}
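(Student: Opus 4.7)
The plan is to address the three claims in sequence, using the Schwarz inequality together with $\Gamma$-equivariance to handle the ideal structure, a Stinespring-type argument for the embedding property, and then purely formal manipulations for the uniqueness statement.

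For the first assertion I would verify closedness, the two-sided ideal property, and the $\pi$-ideal property in turn. Closedness of $I_\phi$ is immediate from continuity of $x\mapsto\phi(x^*x)$, and Kadison's Schwarz inequality $\phi((ba)^*(ba))\le\|b\|^2\phi(a^*a)$ shows $I_\phi$ is a left ideal. For the right ideal property I invoke $\Gamma$-equivariance: since $\Gamma$ acts on $B(H_\pi)$ by $\mathrm{Ad}_\pi$,
$$\phi((a\pi_s)^*(a\pi_s))=\phi(\pi_s^*(a^*a)\pi_s)=\pi_s^*\phi(a^*a)\pi_s=0\quad\text{for all } a\in I_\phi,\ s\in\Gamma,$$
so $a\pi_s\in I_\phi$, and combining linearity, closedness, and norm-density of $\mathrm{span}\,\pi(\Gamma)$ in $C^*_\pi(\Gamma)$ extends this to $ab\in I_\phi$ for every $b\in C^*_\pi(\Gamma)$. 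To see $I_\phi$ is a $\pi$-ideal, take the minimal $\Gamma$-equivariant Stinespring dilation $\rho:C^*_\pi(\Gamma)\to B(K)$ with $\phi=V^*\rho(\cdot)V$; two-sidedness gives $\rho(a)\rho(b)V=0$ for every $b$ whenever $a\in I_\phi$ (since $\phi((ab)^*(ab))=0$), hence $\rho(a)=0$ by minimality, so $\ker\rho=I_\phi$. The induced injective $\Gamma$-equivariant $*$-homomorphism $\bar\rho:C^*_\pi(\Gamma)/I_\phi\hookrightarrow B(K)$ can then be pulled back into $B(H_\pi)$ using the $\pi$-injectivity of $\mathcal B_\pi$ (property~(v) of Section~\ref{bdry}) to yield the required $\Gamma$-embedding.

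For the second assertion I produce $\psi$ explicitly. Let $\iota:C^*_\pi(\Gamma)/J\hookrightarrow B(H_\pi)$ be the witnessing $\Gamma$-embedding, $q:C^*_\pi(\Gamma)\to C^*_\pi(\Gamma)/J$ the quotient map, and $\phi_0:B(H_\pi)\to\mathcal B_\pi$ the canonical $\Gamma$-projection. Then $\psi:=\phi_0\circ\iota\circ q$ is a composition of $\Gamma$-maps, and for $x\in J$ one has $q(x^*x)=q(x)^*q(x)=0$, so $\psi(x^*x)=0$, giving $J\subseteq I_\psi$.

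The third claim is then purely formal. If $\phi$ is the unique $\Gamma$-map $C^*_\pi(\Gamma)\to\mathcal B_\pi$, every proper closed $\pi$-ideal $J$ satisfies $J\subseteq I_\psi=I_\phi$ by the second assertion and uniqueness. Since $\phi(1)=1$, $I_\phi$ is proper and $C^*_\pi(\Gamma)/I_\phi$ is non-zero. A proper $\pi$-ideal of the quotient corresponds to a proper $\pi$-ideal of $C^*_\pi(\Gamma)$ strictly containing $I_\phi$, contradicting maximality, so the quotient has no non-trivial $\pi$-ideal. For uniqueness, any non-zero quotient $C^*_\pi(\Gamma)/K$ with no proper $\pi$-ideal must have $K$ itself a proper $\pi$-ideal, hence $K\subseteq I_\phi$; then $I_\phi/K$ is a $\pi$-ideal of $C^*_\pi(\Gamma)/K$ and must be trivial, forcing $K=I_\phi$. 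The main obstacle I anticipate is the $\pi$-ideal step of the first claim: the Stinespring space $K$ is generically strictly larger than $H_\pi$, and transferring $\bar\rho$ back into $B(H_\pi)$ in a $\Gamma$-equivariant, completely isometric fashion requires careful use of minimality and $\pi$-injectivity of $\mathcal B_\pi$; everything else reduces to bookkeeping around Schwarz' inequality and the standard correspondence between ideals and quotients.
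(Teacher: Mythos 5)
Your treatment of the closed two-sided ideal property (Schwarz inequality for the left ideal, $\Gamma$-equivariance plus density of $\mathrm{span}\,\pi(\Gamma)$ for the right ideal) and of the second assertion (compose the witnessing $\Gamma$-embedding of $C^*_\pi(\Gamma)/J$ with the $\Gamma$-projection of $B(H_\pi)$ onto $\mathcal{B}_\pi$) matches the paper's argument in substance. The problem is the step you yourself flag as the main obstacle: showing that $I_\phi$ is a $\pi$-ideal. Your Stinespring dilation correctly identifies $\ker\rho=I_\phi$ and produces a completely isometric $\Gamma$-equivariant $*$-monomorphism $\bar\rho:C^*_\pi(\Gamma)/I_\phi\hookrightarrow B(K)$, but $B(K)$ carries the action $\mathrm{Ad}_{\rho\circ\pi}$, not $\mathrm{Ad}_\pi$ on $B(H_\pi)$, and the definition of a $\pi$-ideal demands a $\Gamma$-embedding into $B(H_\pi)$. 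Property $(v)$ of Section \ref{bdry} cannot ``pull $\bar\rho$ back'': it only extends $\Gamma$-maps defined on $\Gamma$-invariant subspaces \emph{of} $B(H_\pi)$ to $\Gamma$-maps of $B(H_\pi)$ into itself; it says nothing about transporting a subspace of $B(K)$ into $B(H_\pi)$, let alone completely isometrically. The only natural map from $\bar\rho\bigl(C^*_\pi(\Gamma)/I_\phi\bigr)$ back into $B(H_\pi)$ is the compression $V^*(\cdot)V$, which recovers the induced map $\bar\phi:C^*_\pi(\Gamma)/I_\phi\to\mathcal{B}_\pi$; whether \emph{that} is completely isometric is precisely the content of the claim, so the dilation detour is circular rather than a proof. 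The paper takes the direct route: it asserts that $C^*_\pi(\Gamma)/I_\phi$ is $\Gamma$-isometric to $\mathrm{Im}(\phi)$ as $\Gamma$-operator systems, and $\mathrm{Im}(\phi)\subseteq\mathcal{B}_\pi\subseteq B(H_\pi)$ already sits inside $B(H_\pi)$; that identification is what your write-up would need to establish (or cite), and your argument as written does not supply it.

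A smaller point on the final assertion: to conclude that $C^*_\pi(\Gamma)/I_\phi$ is the \emph{unique} non-zero quotient with no $\pi$-ideal, you take an arbitrary non-zero quotient $C^*_\pi(\Gamma)/K$ with that property and immediately apply the second assertion to $K$, which presupposes that $K$ is itself a $\pi$-ideal. That is exactly the hypothesis the notion of $\pi$-ideal is designed to single out (see Remark \ref{homo}$(iii)$: not every closed ideal of $C^*_\pi(\Gamma)$ is a $\pi$-ideal), so either restrict the uniqueness claim to quotients by $\pi$-ideals or justify why $K$ must be one. The paper leaves this part as ``the rest of the assertions now follow,'' so your bookkeeping is no worse than the original, but the gap in the $\pi$-ideal step of the first claim is genuine and needs to be closed before the rest stands.
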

\begin{proof}
	We adapt the proof of \cite[Proposition 3.1]{ks}. The left kernel $I_\phi$   is a left ideal by Schwarz inequality for u.c.p.
	maps. It is also closed by the fact that c.p. maps are (c.b. and so) bounded. The equality
	$$\phi((x\pi_{t})^*(x\pi_{t}))=\phi(\pi_{t^{-1}}x^*x\pi_{t}) = t^{-1}\cdot \phi(x^*x)$$
	shows that $I_\phi$ is also a right ideal. Finally, the quotient $C^*_\pi(\Gamma)/I_\phi$ is $\Gamma$-isometric to ${\rm Im}(\phi)$, as $\Gamma$-operator systems, and ${\rm Im}(\phi)$ $\Gamma$-embeds in $B(H_\pi)$, thus $I_\phi$ is a $\pi$-ideal.  
	
	Given a proper closed $\pi$-ideal
	$J\unlhd C^*_\pi(\Gamma)$, since $C^*_\pi(\Gamma)/J$ $\Gamma$-embeds in $B(H_\pi)$, by $\pi$-injectivity, there is a
	$\Gamma$-map: $C^*_\pi(\Gamma)/J\to \mathcal{B}_{\pi}$ \cite[Proposition 3.4(5)]{kk}. Composing this  with the canonical quotient
	map $q: C^*_\pi(\Gamma)\to C^*_\pi(\Gamma)/J$, we get a $\Gamma$-map $\psi: C^*_\pi(\Gamma)\to \mathcal{B}_{\pi}$ such that $J={\rm ker}(q)\subseteq I_\psi$.
	The  rest of assertions now follow. \qed
\end{proof}	

The restriction of working only with $\pi$-ideals is annoying. This suggests to relax the situation by considering $\Gamma$-maps $\phi: C^*_\pi(\Gamma)\to \mathcal{B}_{\sigma}$ with an appropriate representation $\sigma\in{\rm Rep}(\Gamma)$. Indeed, we may construct  representation $\sigma$ in such a way that all ideals of $C^*_\pi(\Gamma)$ are $\sigma$-ideals: given a proper ideal $J\unlhd C^*_\pi(\Gamma)$, compose the canonical surjection: $C^*_{\rm max}(\Gamma)\twoheadrightarrow C^*_\pi(\Gamma)$ with the quotient map: $C^*_\pi(\Gamma)\to C^*_\pi(\Gamma)/J$ to get a representation of $C^*_{\rm max}(\Gamma)$. Let $\sigma_J$ be the associated representation of $\Gamma$, and put $\sigma:=\oplus \sigma_J$, summing over all proper ideals $J\unlhd C^*_\pi(\Gamma)$.  Then $C^*_\pi(\Gamma)/J$ embeds in $B(H_\sigma)$, and by construction, this is a $\Gamma$-embedding. This leads to the following modification of the previous proposition with an almost verbatim proof.  

\begin{proposition} \label{sq2}
	Let $\sigma$ be a representation of $\Gamma$ such that all (proper) quotients of $C^*_\pi(\Gamma)$ $\Gamma$-embed in $B(H_\sigma)$. Given a $\Gamma$-map $\phi: C^*_\pi(\Gamma)\to \mathcal{B}_{\sigma}$,  the left kernel $I_\phi :=
	\{x\in C^*_\pi(\Gamma): \phi(x^*x) = 0\}$ is a closed ideal of $C^*_\pi(\Gamma)$. Conversely, for every proper closed ideal
	$J\unlhd C^*_\pi(\Gamma)$, there is a  $\Gamma$-map $\psi: C^*_\pi(\Gamma)\to \mathcal{B}_{\sigma}$ such that $J\subseteq I_\psi$.
	In particular, if there is a  unique $\Gamma$-map $\phi: C^*_\pi(\Gamma)\to \mathcal{B}_{\sigma}$, then $I_\phi$ contains all
	proper closed ideals of $C^*_\pi(\Gamma)$  and $C^*_\pi(\Gamma)/I_\phi$  is the unique non-zero simple quotient of $C^*_\pi(\Gamma)$. 
\end{proposition}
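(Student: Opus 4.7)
The plan is to mimic the proof of Proposition \ref{sq} essentially verbatim, with the key modification that the hypothesis on $\sigma$ removes the need to restrict attention to $\pi$-ideals: every proper closed ideal $J$ of $C^*_\pi(\Gamma)$ is automatically a ``$\sigma$-ideal'' in the sense that $C^*_\pi(\Gamma)/J$ admits a $\Gamma$-embedding into $B(H_\sigma)$, so the same argument now delivers the stronger conclusion stated. The main engine of the proof remains $\sigma$-injectivity (property $(v)$ in Section~\ref{bdry}) applied via the Choi--Effros $\Gamma$-projection $\phi_0 \colon B(H_\sigma)\to \mathcal B_\sigma$.

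For the first part, the fact that $I_\phi$ is a closed two-sided ideal is proved exactly as in Proposition \ref{sq}. First, $I_\phi$ is norm-closed since c.p.\ maps are bounded. It is a left ideal by the Schwarz-type inequality $\phi(y^*x^*xy)\leq \|x\|^2\phi(y^*y)$ for u.c.p.\ maps. It is a right ideal by the same $\Gamma$-equivariance calculation
\[
\phi\bigl((x\pi_t)^*(x\pi_t)\bigr)=\phi(\pi_{t^{-1}}x^*x\pi_t)= t^{-1}\!\cdot\phi(x^*x)=0
\]
for $x\in I_\phi$ and $t\in \Gamma$, extended by density of the linear span of $\pi(\Gamma)$ in $C^*_\pi(\Gamma)$ together with closedness of $I_\phi$.

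For the converse, given a proper closed ideal $J\unlhd C^*_\pi(\Gamma)$, by hypothesis the quotient $C^*_\pi(\Gamma)/J$ $\Gamma$-embeds in $B(H_\sigma)$ via some $\iota$. Composing with the canonical $\Gamma$-equivariant projection $\phi_0 \colon B(H_\sigma)\to \mathcal B_\sigma$ and with the quotient map $q\colon C^*_\pi(\Gamma)\to C^*_\pi(\Gamma)/J$ yields a $\Gamma$-map $\psi:=\phi_0\circ \iota\circ q\colon C^*_\pi(\Gamma)\to \mathcal B_\sigma$. Since for $x\in J$ we have $q(x)=0$ and hence $\psi(x^*x)=0$, we obtain $J\subseteq I_\psi$, as required. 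For the final ``in particular'' assertion, the uniqueness of $\phi$ forces $\psi=\phi$ for every choice of $J$, so $I_\phi$ contains every proper closed ideal; since $\phi$ is unital we have $\phi(1)=1\neq 0$, so $1\notin I_\phi$ and $I_\phi$ is itself proper. It is then the unique maximal proper closed ideal, and by the usual correspondence a closed ideal of $C^*_\pi(\Gamma)/I_\phi$ pulls back to a closed ideal of $C^*_\pi(\Gamma)$ containing $I_\phi$, so maximality of $I_\phi$ forces $C^*_\pi(\Gamma)/I_\phi$ to be simple and uniquely characterized among non-zero simple quotients. The only potential obstacle is verifying that the chosen $\Gamma$-embedding $\iota$ genuinely respects the $\Gamma$-action (i.e.\ intertwines the Ad action of $\Gamma$ on $C^*_\pi(\Gamma)/J$ with Ad$_\sigma$ on $B(H_\sigma)$); this is precisely the content of the hypothesis on $\sigma$ and is guaranteed by the explicit construction of $\sigma$ described in the paragraph preceding the statement.
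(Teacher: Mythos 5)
Your proof is correct and follows essentially the same route as the paper, which itself only indicates that Proposition \ref{sq2} has an ``almost verbatim'' proof obtained from Proposition \ref{sq} by replacing $\pi$-ideals with arbitrary closed ideals (legitimate under the hypothesis that every quotient $\Gamma$-embeds in $B(H_\sigma)$). The only cosmetic difference is that you produce the $\Gamma$-map $C^*_\pi(\Gamma)/J\to\mathcal B_\sigma$ by composing the embedding directly with the minimal $\Gamma$-projection $\phi_0\colon B(H_\sigma)\to\mathcal B_\sigma$, whereas the paper invokes the relative ($\pi$-)injectivity of the boundary; these are the same mechanism.
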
  

A (non commutative) {\it action class} of $\Gamma$ is a collection $\mathfrak C$ of unital $\Gamma$-$C^*$-algebras. A representation $\pi\in$Rep$(\Gamma)$ is a $\mathfrak C$-representation, writing $\pi\in$Rep$_{\mathfrak{C}}(\Gamma)$, if there is a $\Gamma$-map: $A \to B(H_\pi)$, for some $A\in\mathfrak{C}$. If 
$\pi$ is in Rep$_{\mathfrak{C}}(\Gamma)$, then so is $\sigma$, for each $\sigma\preceq\pi$:  there is a $\Gamma$-map: $A\to B(H_\pi)$, for some $A\in\mathfrak{C}$. Since $\sigma\preceq\pi$, there is a  $\Gamma$-map: $C^*_\pi(\Gamma)\to C^*_\sigma(\Gamma)$, which then extends to a $\Gamma$-map: $B(H_\pi)\to B(H_\sigma)$. The composition gives a $\Gamma$-map: $A\to B(H_\sigma)$, showing that $\sigma\in$Rep$_{\mathfrak{C}}(\Gamma)$.

A subgroup $\Lambda\leq \Gamma$  is a $\mathfrak C$-subgroup, writing $\Lambda\in$Sub$_{\mathfrak{C}}(\Gamma)$, 
if $\lambda_{\Gamma/\Lambda}\in$Sub$_{\mathfrak{C}}(\Gamma)$. 

For a state $\omega\in S(A)$ of a unital $\Gamma$-$C^*$-algebra $A$ the corresponding {\it Poisson map} is the $\Gamma$-map $P_\omega: A\to \ell^\infty(\Gamma)$ defined by
$P_\omega(a)(s) =\langle \omega, s^{-1}\cdot a\rangle$, 
for $s\in\Gamma$, $a\in A$.
If $\Lambda\leq \Gamma$ and $\omega$ is $\Lambda$-invariant, then we have a Poisson map $$P_\omega: A\to \ell^\infty(\Gamma/\Lambda); \ \ P_\omega(a)(s\Lambda) =\langle \omega, s^{-1}\cdot a\rangle \ (s\in\Gamma, a\in A),$$
which is well-defined, since if $s\Lambda=t\Lambda$, then $(t^{-1}s) \omega=\omega$, and so $\langle \omega, t^{-1}\cdot a\rangle=\langle \omega, s^{-1}\cdot a\rangle$, for each $a\in A$.

\begin{lemma} \label{state}
    For $\Lambda\leq \Gamma$,  
	$\Lambda\in{\rm Sub}_{\mathfrak{C}}(\Gamma)$ iff there is a $\Lambda$-invariant state on some $A\in \mathfrak{C}$.
\end{lemma}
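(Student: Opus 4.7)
The plan is to prove both implications directly, using the Poisson map in one direction and a canonical vector state in the other.

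For the forward implication, suppose $\Lambda \in \mathrm{Sub}_{\mathfrak{C}}(\Gamma)$, so that by definition $\lambda_{\Gamma/\Lambda} \in \mathrm{Rep}_{\mathfrak{C}}(\Gamma)$. This provides a $\Gamma$-map $\phi: A \to B(\ell^2(\Gamma/\Lambda))$ for some $A \in \mathfrak{C}$. First I would observe that the vector state $\omega_0(T) := \langle T\delta_\Lambda, \delta_\Lambda\rangle$ on $B(\ell^2(\Gamma/\Lambda))$ is $\Lambda$-invariant, since $\lambda_{\Gamma/\Lambda}(h)\delta_\Lambda = \delta_{h\Lambda} = \delta_\Lambda$ for each $h \in \Lambda$, and the $\Gamma$-action is implemented by $\mathrm{Ad}_{\lambda_{\Gamma/\Lambda}}$. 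Then $\omega := \omega_0 \circ \phi$ is a $\Lambda$-invariant state on $A \in \mathfrak{C}$, as desired.

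For the converse, suppose there is a $\Lambda$-invariant state $\omega$ on some $A \in \mathfrak{C}$. Then the Poisson map
$$P_\omega: A \to \ell^\infty(\Gamma/\Lambda); \quad P_\omega(a)(s\Lambda) = \langle \omega, s^{-1} \cdot a\rangle$$
is well-defined precisely by $\Lambda$-invariance of $\omega$ (if $s\Lambda = t\Lambda$ then $t^{-1}s \in \Lambda$, so $\langle \omega, s^{-1}\cdot a\rangle = \langle \omega, t^{-1}\cdot a\rangle$), unital, and positive, hence u.c.p. into the commutative C*-algebra $\ell^\infty(\Gamma/\Lambda)$. A direct verification shows it is $\Gamma$-equivariant. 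Next I would compose with the canonical $\Gamma$-equivariant embedding of $\ell^\infty(\Gamma/\Lambda)$ into $B(\ell^2(\Gamma/\Lambda))$ as multiplication operators, which intertwines translation on $\ell^\infty(\Gamma/\Lambda)$ with $\mathrm{Ad}_{\lambda_{\Gamma/\Lambda}}$. This yields a $\Gamma$-map $A \to B(\ell^2(\Gamma/\Lambda))$, so $\lambda_{\Gamma/\Lambda} \in \mathrm{Rep}_{\mathfrak{C}}(\Gamma)$, i.e.\ $\Lambda \in \mathrm{Sub}_{\mathfrak{C}}(\Gamma)$.

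There is no serious obstacle here: both directions are short and use only the definitions together with the two canonical constructions (vector state at $\delta_\Lambda$ and the Poisson map). The only small point requiring care is the verification that the multiplication embedding $\ell^\infty(\Gamma/\Lambda) \hookrightarrow B(\ell^2(\Gamma/\Lambda))$ is $\Gamma$-equivariant for the $\mathrm{Ad}_{\lambda_{\Gamma/\Lambda}}$-action, which is routine.
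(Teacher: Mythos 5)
Your proof is correct and follows essentially the same route as the paper: pull back a $\Lambda$-invariant state on $B(\ell^2(\Gamma/\Lambda))$ through the given $\Gamma$-map in one direction, and use the Poisson map $P_\omega: A \to \ell^\infty(\Gamma/\Lambda) \subseteq B(\ell^2(\Gamma/\Lambda))$ in the other. The only difference is that you explicitly exhibit the vector state at $\delta_\Lambda$ where the paper merely asserts the existence of a $\Lambda$-invariant state on $B(\ell^2(\Gamma/\Lambda))$; your version is a touch more self-contained.
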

\begin{proof}
	If $A\in \mathfrak{C}$ and $\psi: A\to  B(\ell^2(\Gamma/\Lambda))$ is a $\Gamma$-map, then for a $\Lambda$-invariant state $\omega$ on $B(\ell^2(\Gamma/\Lambda))$, let $\omega_0$ be the
	restriction of $\omega$ to the operator system $\psi(A)$, then $\omega_0\circ \psi\in \mathcal S(A)$ is a $\Lambda$-invariant state on $A$.
	Conversely, if there is a $\Lambda$-invariant state $\omega$ on some $A\in \mathfrak{C}$, by the observation before this lemma, the Poisson map
	$P_\omega: A\to \ell^\infty(\Gamma/\Lambda)$ is the desired $\Gamma$-map.\qed	
\end{proof}	

The subgroup class Sub$_{\mathfrak{C}}(\Gamma)$
is invariant under conjugation, since representations  $\lambda_{\Gamma/\Lambda}$ and $\lambda_{\Gamma/\Lambda^s}$ are  unitarily equivalent.

Let $\pi,\sigma\in{\rm Rep}(G)$ and let $A$ be a 
 unital $\Gamma$-$C^*$-algebra with $\Gamma$-map $\phi: A\to B(H_\pi)$ (i.e., $\pi\in$Rep$_{\{A\}}(\Gamma)$). Let $\psi: B(H_\pi)\to \mathcal{B}_{\sigma}$ be a $\Gamma$-map. In particular, the restriction of $\psi$ to $C^*_\pi(\Gamma)$ gives a boundary map $b_\pi^\sigma: C^*_\pi(\Gamma)\to \mathcal{B}_{\sigma}$. We assume further that 
 $$b_A=\psi\circ \phi: A\to \mathcal{B}_{\sigma}$$
 is a $\Gamma$-embedding. When $\mathcal{B}_{\sigma}$ is $\Gamma$-embedded in $B(H_\pi)$, alternatively we could also start with a boundary map $b_\pi^\sigma: C^*_\pi(\Gamma)\to \mathcal{B}_{\sigma}$ and extend it by $\pi$-rigidity to a $\Gamma$-map $\psi: B(H_\pi)\to \mathcal{B}_{\sigma}$ and get $b_A$ as above (we still need the assumption that $b_A$ is an embedding, i.e., a contraction). This situation is quite typical in this section, and we frequently use the following key fact that is an adaptation of \cite[Theorem 3.5]{ks}. For a unital $C^*$-algebra $B$ and element $x\in B$ we use the following notation: 
 $${\rm supp}(x):=\overline{\{\omega\in \mathcal P(B): \langle \omega, x\rangle\neq 0\}},$$
 where the closure is taken in weak$^*$-topology of $B^*$.  
    
\begin{lemma} \label{key}
    	Let $\psi$, $\phi$, $b_\pi^\sigma$ and $b_A$ are $\Gamma$-maps as above. Let $\mathcal P(A)$ be the pure state space of $A$ and for $s\in\Gamma$, let 
    	$$\mathcal P^s(A):=\{\omega\in \mathcal P(A): s\omega=\omega\}$$
    	be the fixed point set of $s$, and $\mathcal P^s_0(A)$ be the interior of $\mathcal P^s(A)$ in weak$^*$-topology of $A^*$. If the $\Gamma$-embedding $b_A=\psi\circ \phi: A\to \mathcal{B}_{\sigma}$ is regular, then 
    	$${\rm supp}(b_\pi^\sigma(\pi_s))\subseteq \overline{b_{A*}^{-1}(\mathcal P^s_0(A))},$$
    	where $b_{A*}:\mathcal P(\mathcal{B}_{\sigma})\to \mathcal P(A)$ is the induced map on pure state spaces, and  closure on the right hand side is taken in the weak$^*$-topology of $\mathcal{B}_{\sigma}^*$.   	 
\end{lemma}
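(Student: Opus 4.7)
The plan is to adapt the Cauchy--Schwarz technique of Lemma \ref{rad2} following the scheme of \cite[Theorem 3.5]{ks}. Since \text{supp} and the right-hand side of the claimed inclusion are both weak*-closures, it suffices to establish
\[
\big\{\omega \in \mathcal{P}(\mathcal{B}_\sigma): \omega(b_\pi^\sigma(\pi_s)) \neq 0\big\} \;\subseteq\; \overline{b_{A*}^{-1}(\mathcal{P}^s_0(A))}.
\]
So I would fix a pure state $\omega \in \mathcal{P}(\mathcal{B}_\sigma)$ with $\omega(b_\pi^\sigma(\pi_s)) \neq 0$, form $\rho := \omega \circ \psi \in \mathcal{S}(B(H_\pi))$ (so $\rho(\pi_s) \neq 0$), and set $\omega_0 := b_{A*}(\omega) = \rho \circ \phi$; by regularity of $b_A$, $\omega_0$ is a pure state on $A$.

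The central step is to prove $s\omega_0 = \omega_0$. I would argue by contradiction: supposing $s\omega_0 \neq \omega_0$, a Kadison transitivity argument for the distinct pure states $\omega_0$ and $s\omega_0$ on $A$ produces $a \in A$ with $0 \leq a \leq 1$, $\omega_0(a) = 1$, and $\omega_0(s \cdot a) = 0$. Let $x := \phi(a) \in B(H_\pi)$; since $\phi$ is u.c.p., $0 \leq x \leq 1$, and $\Gamma$-equivariance gives $\pi_s x \pi_s^* = \phi(s \cdot a)$. From $0 \leq \phi(s\cdot a) \leq 1$ and $\rho(\phi(s\cdot a)) = 0$ one obtains
\[
\rho(\pi_s x^2 \pi_s^*) = \rho\big((\pi_s x \pi_s^*)^2\big) = \rho(\phi(s\cdot a)^2) \leq \rho(\phi(s\cdot a)) = 0,
\]
while $\rho(x) = 1$ forces $\rho(x^2) = 1$ by Cauchy--Schwarz. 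Splitting $\rho(\pi_s) = \rho(\pi_s x) + \rho(\pi_s(1-x))$ and applying Cauchy--Schwarz gives
\[
|\rho(\pi_s x)|^2 \leq \rho(\pi_s x^2 \pi_s^*) = 0, \qquad |\rho(\pi_s(1-x))|^2 \leq \rho((1-x)^2) \leq \rho(1-x) = 0,
\]
so $\rho(\pi_s) = 0$, contradicting the hypothesis. Hence $\omega_0 \in \mathcal{P}^s(A)$.

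The final step, upgrading $\omega_0 \in \mathcal{P}^s(A)$ to $\omega \in \overline{b_{A*}^{-1}(\mathcal{P}^s_0(A))}$, is the main technical obstacle. The set $V := \{\omega' \in \mathcal{P}(\mathcal{B}_\sigma) : \omega'(b_\pi^\sigma(\pi_s)) \neq 0\}$ is weak*-open, contains $\omega$, and by the previous step satisfies $b_{A*}(V) \subseteq \mathcal{P}^s(A)$. To finish one has to argue that $\omega$ is approximated by $\omega' \in V$ for which $b_{A*}(\omega')$ actually lies in the interior $\mathcal{P}^s_0(A)$. I would exploit the regularity of $b_A$ (so that $b_{A*}$ preserves pure states) together with the minimality of the $\Gamma$-action on $\mathcal{B}_\sigma$ established in Proposition \ref{mp} to translate the open neighbourhood $V$ of $\omega$ by group elements into positions where the $b_{A*}$-image falls inside open pieces of $\mathcal{P}^s(A)$. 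This passage from ``fixed'' to ``in the interior of fixed'' is the hardest part, since it requires carefully controlling the topology of $\mathcal{P}(\mathcal{B}_\sigma)$ and the failure of $b_{A*}$ to be an open map in general.
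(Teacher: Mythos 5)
Your first two steps reproduce the paper's core computation: the Cauchy--Schwarz/multiplicative-domain argument showing that if $\rho=\omega\circ\psi$ does not kill $\pi_s$ then $b_{A*}(\omega)$ must be fixed by $s$ is exactly the mechanism the paper uses (there it is run in contrapositive form on an auxiliary point $\omega_0$, but the algebra is the same). The problem is the last step, which you correctly flag as the main obstacle and then leave open. What you have at that stage is $b_{A*}(V)\subseteq\mathcal P^s(A)$ for the open set $V=\{\omega:\langle\omega,b_\pi^\sigma(\pi_s)\rangle\neq 0\}$, and the statement requires $V\subseteq\overline{b_{A*}^{-1}(\mathcal P^s_0(A))}$; the gap between ``fixed'' and ``interior of fixed'' is not closed by your sketch, and the idea you gesture at (translating $V$ into positions where its image lands in open pieces of $\mathcal P^s(A)$) is not how the argument goes.

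The missing ingredient is a Baire category argument. Fix $\omega\in V$ and an arbitrary open neighbourhood $U\subseteq V$ of $\omega$. By minimality of the $\Gamma$-action on $\mathcal P(\mathcal B_\sigma)$ (Proposition \ref{mp}) every orbit is dense, so the translates $\{tU\}_{t\in\Gamma}$ cover $\mathcal P(\mathcal B_\sigma)$, and by compactness finitely many suffice. Pushing forward by $b_{A*}$, which is $\Gamma$-equivariant, surjective onto $\mathcal P(A)$, and preserves purity by regularity, one gets $\mathcal P(A)\subseteq\bigcup_{t\in F}t\,b_{A*}(U)$ for a finite $F$; Baire's theorem then forces $b_{A*}(U)$ to have nonempty interior $W$. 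Since $b_{A*}(U)\subseteq b_{A*}(V)\subseteq\mathcal P^s(A)$, the open set $W$ lies in $\mathcal P^s_0(A)$, hence $U$ meets $b_{A*}^{-1}(\mathcal P^s_0(A))$; as $U$ was arbitrary, $\omega\in\overline{b_{A*}^{-1}(\mathcal P^s_0(A))}$. Without this covering-plus-Baire step (which is precisely where regularity and Proposition \ref{mp} earn their keep in the paper's proof, run there as a contradiction argument on a neighbourhood disjoint from $b_{A*}^{-1}(\mathcal P^s_0(A))$), your argument only proves the weaker inclusion $\mathrm{supp}(b_\pi^\sigma(\pi_s))\subseteq b_{A*}^{-1}(\mathcal P^s(A))$, which is not what later applications (e.g.\ the uniqueness of the boundary map and Corollary \ref{supp}) need.
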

\begin{proof}
    	If  $\psi: B(H_\pi)\to  B(\ell^2(\Gamma/\Lambda))$ and $\phi: A\to B(H_\pi)$ are $\Gamma$-maps, and $b_A=\psi\circ \phi$. Let $b_\pi^\sigma$ be the restriction of $\psi$ to $C^*_\pi(\Gamma)$. Arguing by the way of contradiction, let there is $s\in\Gamma$ and $\omega\in \mathcal P(A)$ with 
    	$\omega\notin \overline{b_{A*}^{-1}(\mathcal P^s_0(A))},$ such that $\langle\omega, b_\pi^\sigma(\pi_s)\rangle\neq 0.$ Choose a neighborhood $U$ of $\omega$ in $\mathcal P(A)$ with $U\cap b_{A*}^{-1}(\mathcal P^s_0(A))=\emptyset$ such that $\langle\varrho, b_\pi^\sigma(\pi_s)\rangle\neq 0,$ for each $\varrho\in U$. Choose $\omega_0\in U$ with $b_{A*}(\omega_0)\notin P^s(A)$. 
    	
    	By Proposition \ref{mp}, the translates of $U$ by elements of $\Gamma$ cover $\mathcal P(\mathcal{B}_{\sigma})$, and so by compactness, $\mathcal P(\mathcal{B}_{\sigma})\subseteq \bigcup_{t\in F} tU$, for some finite set $F\subseteq \Gamma$. By regularity, $b_{A*}(\mathcal{B}_{\sigma}) \subseteq \mathcal P(A)$, thus by the fact that $b_{A*}$ is a $\Gamma$-map, $\mathcal P(A)\subseteq \bigcup_{t\in F} tb_{A*}(U)$. In particular, $b_{A*}(U)$ has non-empty interior, by Baire category theorem. By Krein-Hahn-Banach theorem, there is $0\leq a\leq 1$ in $A$ with
    	$$ \langle b_{A*}(\omega_0), a\rangle=1, \ \ \langle b_{A*}(\omega_0), s\cdot a\rangle=0.$$  
    	Thus,
    	$$\langle\omega_0, \psi\circ\phi(a)\rangle=\langle\omega_0, b_A(a)\rangle=\langle b_{A*}(\omega_0), a\rangle=1,$$
    	whereas,
    	$$\langle\omega_0, \psi(\pi_s\phi(a)\pi_s^*)\rangle=\langle\omega_0, \psi\circ\phi(s\cdot a)\rangle=\langle b_{A*}(\omega_0), s\cdot a\rangle=0.$$
    	Since $\phi$ is u.c.p., we have $0\leq \phi(a)\leq 1$, so arguing as in the proof of Lemma \ref{rad2} (cf., \cite[Lemma 2.2]{hk}), we get $0 \leq \pi_s^*\phi(a)\pi_s\leq 1$ and $\langle\omega_0, \psi(\pi_s\phi(a)^2\pi_s^*)\rangle=0.$
    	Similarly, from $\langle\omega_0, \psi(1-\phi(a))\rangle=0,$ it follows that $\langle\omega_0, \psi((1-\phi(a))^2)\rangle=0.$ By Cauchy-Schwartz inequality,
    	$$\big|\langle\omega_0, \psi((1-\phi(a))\pi_s)\rangle\big|\leq \langle\omega_0, \psi((1-\phi(a))^2)\rangle\langle\omega_0, \psi(1)\rangle=0.$$ Thus, for the state $\tau:=\omega_0\circ \psi$ on $B(H_\pi)$,
    	\begin{align*}
    		\big|\langle\omega_0, \psi(\phi(a)\pi_s)\rangle\big|&=\big|\tau(\phi(a)\pi_s)\big|=\big|\tau( \pi_s^*\pi_s\phi(a)\pi_s)\big|\\&=\big|\tau(\pi_s^*\phi(a)\pi_s\pi_s^*)\big|\leq \big|\tau(\pi_s^*\phi(a)^2\pi_s)\big|
    		=0,
    	\end{align*} 
    	Hence, $$\langle\omega_0, b_\pi^\sigma(\pi_s)\rangle=\langle\omega_0, \psi(\pi_s)\rangle=\langle\omega_0, \psi(\phi(a)\pi_s)\rangle+\langle\omega_0, \psi((1-\phi(a))\pi_s)\rangle=0,$$
    	which contradicts the fact that $\omega_0\in U$ and $\langle\varrho, b_\pi^\sigma(\pi_s)\rangle\neq 0,$ for each $\varrho\in U$.\qed	
    \end{proof}	

\begin{corollary} \label{supp}
	Let $A$ be a $\pi$-boundary, then every trace on $C^*_\pi(\Gamma)$ is supported on $\ker(\Gamma\curvearrowright A)$.
\end{corollary}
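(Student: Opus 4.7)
The plan is to deduce the corollary directly from Lemma~\ref{key}, with the $\pi$-boundary $A$ playing the role of the ``test algebra'' whose trivial part for the $\Gamma$-action bounds the support of $\tau$. Given a trace $\tau$ on $C^*_\pi(\Gamma)$, the first move is to promote it to a $\Gamma$-map into $\mathcal{B}_\pi$. Concretely, the composition with the canonical inclusion $\mathbb{C}\hookrightarrow \mathcal{B}_\pi$, $z\mapsto z\cdot 1$, gives a $\Gamma$-map $\tilde\tau:C^*_\pi(\Gamma)\to \mathcal{B}_\pi$, $x\mapsto \tau(x)\cdot 1$ (equivariance coming from traciality). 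By $\pi$-injectivity (property $(v)$) this extends to a $\Gamma$-map $\psi:B(H_\pi)\to \mathcal{B}_\pi$ (passing, if necessary, to $\mathcal{B}_\sigma$ for a suitable $\sigma\succeq\pi_\tau$ where extendability is automatic, e.g.\ the GNS representation of $\tau$, and then using $\pi_\tau\preceq \pi$ to get a $\Gamma$-map $\mathcal{B}_\pi\to\mathcal{B}_{\pi_\tau}$).

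Let $\phi:A\to B(H_\pi)$ be the composition of the $\Gamma$-embedding $A\hookrightarrow\mathcal{B}_\pi$ (given by the $\pi$-boundary structure) with the operator system inclusion $\mathcal{B}_\pi\subseteq B(H_\pi)$. The second key observation is that by $\pi$-rigidity (property $(iv)$) the restriction $\psi|_{\mathcal{B}_\pi}$ is the identity map on $\mathcal{B}_\pi$, so the boundary embedding $b_A:=\psi\circ\phi$ coincides with the original $\Gamma$-embedding $A\hookrightarrow \mathcal{B}_\pi$ of the $\pi$-boundary $A$. Moreover the restriction $b_\pi^\pi=\psi|_{C^*_\pi(\Gamma)}=\tilde\tau$ sends $\pi_g$ to the scalar $\tau(\pi_g)\cdot 1$. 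Applying Lemma~\ref{key} (with $\sigma=\pi$) yields
$$\mathrm{supp}\bigl(\tau(\pi_g)\cdot 1\bigr)\;\subseteq\;\overline{b_{A*}^{-1}\bigl(\mathcal{P}^g_0(A)\bigr)}$$
for every $g\in\Gamma$.

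The final step is a dichotomy: either $\tau(\pi_g)=0$, in which case the support is empty, or $\tau(\pi_g)\neq 0$, in which case the support of $\tau(\pi_g)\cdot 1$ equals all of $\mathcal{P}(\mathcal{B}_\pi)$. The plan is to show that for $g\notin\ker(\Gamma\curvearrowright A)$ the right-hand side is a proper closed subset of $\mathcal{P}(\mathcal{B}_\pi)$, forcing the first alternative and hence $\tau(\pi_g)=0$. Here one uses that $g\notin\ker(\Gamma\curvearrowright A)$ implies $\mathcal{P}^g(A)$ is a proper closed subset of $\mathcal{P}(A)$, and then invokes minimality of the $\Gamma$-action on $\mathcal{B}_\pi$ (Proposition~\ref{mp}), together with the fact that $b_{A*}$ intertwines the $\Gamma$-actions, to conclude that $\overline{b_{A*}^{-1}(\mathcal{P}^g_0(A))}$ cannot be all of $\mathcal{P}(\mathcal{B}_\pi)$.

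The main obstacle is step one: justifying that $\tilde\tau$ admits a $\Gamma$-equivariant extension with values in $\mathcal{B}_\pi$ (i.e.,\ that $\tau$ is effectively extendable in the sense of Section~\ref{utp}). For an arbitrary trace this is not free, and the plan calls for replacing $\mathcal{B}_\pi$ by $\mathcal{B}_{\pi_\tau}$ where $\pi_\tau$ is the GNS representation of $\tau$, on which the canonical vector-state trace is automatically extendable; one then transports the conclusion back using the $\Gamma$-map $\mathcal{B}_\pi\to\mathcal{B}_{\pi_\tau}$ afforded by $\pi_\tau\preceq \pi$. A secondary subtlety is the passage from ``$g$ acts non-trivially on $A$'' to ``the preimage $b_{A*}^{-1}(\mathcal{P}^g_0(A))$ is non-dense''; this is where the minimality of the action on $\mathcal{B}_\pi$ (Proposition~\ref{mp}) does the crucial work, preventing the fixed-point locus from spreading to a dense subset under the $\Gamma$-equivariant map $b_{A*}$.
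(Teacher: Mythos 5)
Your proposal follows essentially the same route as the paper: extend the trace to a $\Gamma$-map $\psi:B(H_\pi)\to\mathcal{B}_\pi$, note via $\pi$-rigidity that $\psi$ fixes $\mathcal{B}_\pi$ so that $b_A=\psi\circ\phi$ is the given embedding, apply Lemma~\ref{key} with $\sigma=\pi$, and conclude $\tau(\pi_s)=0$ once the set $\overline{b_{A*}^{-1}(\mathcal P^s_0(A))}$ is shown to be proper. The one place you diverge is the properness step: you attribute it to minimality of $\Gamma\curvearrowright\mathcal{B}_\pi$ (Proposition~\ref{mp}), but minimality is neither needed nor, as stated, sufficient; the paper instead uses only continuity and surjectivity of $b_{A*}$, via the chain $\overline{b_{A*}^{-1}(\mathcal P^s_0(A))}\subseteq b_{A*}^{-1}(\overline{\mathcal P^s_0(A)})\subseteq b_{A*}^{-1}(\mathcal P^s(A))\subsetneq\mathcal P(\mathcal{B}_\pi)$, the last inclusion being proper because $s\notin\ker(\Gamma\curvearrowright A)$ yields a pure state $\omega_0\notin\mathcal P^s(A)$ which, by surjectivity, has a preimage. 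Since you already have equivariance, continuity and surjectivity of $b_{A*}$ in hand, this is a misattribution rather than a gap, and is easily repaired. Finally, your worry about extendability of an arbitrary trace is legitimate --- the paper's own proof simply asserts the extension to $B(H_\pi)$ without justification, so your caveat (and the implicit reliance on regularity of $b_A$ demanded by Lemma~\ref{key}) points at hypotheses the corollary as stated leaves tacit.
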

\begin{proof}
	Since there is a $\Gamma$ embedding of $\mathbb C$ into $\mathcal{B}_{\pi}$, a given trace $\tau$ on $C^*_\pi(\Gamma)$ may be regarded as a $\Gamma$-map into $\mathcal{B}_{\pi}$ (still denoted by $\tau$) and could be extended to a $\Gamma$-map $\psi: B(H_\pi)\to \mathcal{B}_{\pi}$. Let $s\notin\ker(\Gamma\curvearrowright A)$ and choose $a\in A$ with $s\cdot a\neq a$. Choose $\omega_0\in \mathcal P(A)$ with $\langle\omega_0,s\cdot a\rangle\neq \langle\omega_0, a\rangle$, then $\omega_0\notin \mathcal P^s(A)$. Since $b_{A*}:  \mathcal P(\mathcal{B}_{\pi})\to \mathcal P(A)$ is onto and $\mathcal P^s(A)$ is weak$^*$-closed,
	$$\overline{b_{A*}^{-1}(\mathcal P^s_0(A))}\subseteq b_{A*}^{-1}(\overline{\mathcal P^s_0(A)})\subseteq b_{A*}^{-1}({\mathcal P^s(A)})\subsetneq \mathcal P(\mathcal{B}_{\pi}).$$
	By Lemma \ref{key}, applied to the case where $\sigma=\pi$, $b_\pi^\pi=\tau$, and $\phi$ is the $\Gamma$-embedding: $A\hookrightarrow \mathcal{B}_{\pi}\hookrightarrow B(H_\pi)$, by the last proper inclusion above, we get ${\rm supp}(\tau(\pi_s))\neq \mathcal P(\mathcal{B}_{\pi})$. Choose $\omega\notin{\rm supp}(\tau(\pi_s))$, then 
	$$\tau(\pi_s)=\langle\omega,\tau(\pi_s)1\rangle=0.\ \ \ \qed$$   
\end{proof}

\begin{corollary} \label{inv}
	If $C^*_\pi(\Gamma)$ is nuclear and has a trace, then each $\pi$-boundary has a $\Gamma$-invariant state.
\end{corollary}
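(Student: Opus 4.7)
The plan is to extend the given trace $\tau$ on $C^*_\pi(\Gamma)$ to a $\Gamma$-invariant state on $B(H_\pi)$, and then restrict along the $\Gamma$-equivariant inclusion $A \hookrightarrow \mathcal{B}_{\pi} \subseteq B(H_\pi)$ supplied by the assumption that $A$ is a $\pi$-boundary. Nuclearity will be exploited precisely to produce such an extension in a $\Gamma$-equivariant fashion.

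The key tool is the Effros-Lance characterization of nuclearity: since $C^*_\pi(\Gamma)$ is nuclear (hence has the weak expectation property), the inclusion $\iota: C^*_\pi(\Gamma) \hookrightarrow B(H_\pi)$ admits a u.c.p. lift $E: B(H_\pi) \to C^*_\pi(\Gamma)^{**}$ whose restriction to $C^*_\pi(\Gamma)$ is the canonical embedding into the bidual. Because this restriction is a $*$-homomorphism, Choi's multiplicative domain theorem places every element of $C^*_\pi(\Gamma)$ in the multiplicative domain of $E$, yielding the bimodularity identity $E(\pi_s x \pi_s^*) = \pi_s E(x) \pi_s^*$ in $C^*_\pi(\Gamma)^{**}$ for every $s \in \Gamma$ and $x \in B(H_\pi)$.

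Next I would extend $\tau$ to the canonical normal tracial state $\tau^{**}$ on $C^*_\pi(\Gamma)^{**}$ (the trace property transfers by normality and weak$^*$-density) and set $\tilde\tau := \tau^{**} \circ E$. The $\Gamma$-invariance of $\tilde\tau$ under $\mathrm{Ad}_\pi$ is then immediate from the bimodularity of $E$ and the trace property of $\tau^{**}$:
\[
\tilde\tau(\pi_s x \pi_s^*) = \tau^{**}(\pi_s E(x) \pi_s^*) = \tau^{**}(E(x)) = \tilde\tau(x).
\]
Restricting $\tilde\tau$ along the $\Gamma$-embedding $A \hookrightarrow \mathcal{B}_{\pi} \hookrightarrow B(H_\pi)$ then produces the desired $\Gamma$-invariant state on $A$.

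The main obstacle I anticipate is ensuring that nuclearity really does furnish the u.c.p. extension $E$ into the bidual, and not merely nets of approximate factorizations through matrix algebras; once this is accepted as a standard consequence of nuclearity via WEP, the rest of the proof collapses to the one-line computation above. A minor cosmetic point is that $\mathcal{B}_{\pi}$ sits inside $B(H_\pi)$ only as an operator system with the Choi-Effros product, but since the $\Gamma$-action on $\mathcal{B}_{\pi}$ is the restriction of $\mathrm{Ad}_\pi$, the $\Gamma$-equivariance of the restricted state on $A$ is automatic.
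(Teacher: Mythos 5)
Your proof is correct and follows essentially the same route as the paper: the paper invokes Brown's result that traces on nuclear C*-algebras are amenable, extends $\tau$ to a hypertrace on $B(H_\pi)$, and restricts along the $\Gamma$-embedding $A\hookrightarrow\mathcal{B}_{\pi}\hookrightarrow B(H_\pi)$. You have simply unpacked the cited black box, constructing the hypertrace explicitly via injectivity of $C^*_\pi(\Gamma)^{**}$ and the multiplicative-domain argument, which is exactly the standard proof of that amenability result.
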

\begin{proof}
	Take a trace $\tau$ on $C^*_\pi(\Gamma)$, which is then amenable by \cite[Theorem 4.2.1]{br}. Extending $\tau$ to a hypertrace $\psi$ on $B(H_\pi)$ and composing $\psi$ with $\Gamma$-embedding: $A\hookrightarrow \mathcal{B}_{\pi}\hookrightarrow B(H_\pi)$, we  get a $\Gamma$-invariant state on the $\pi$-boundary $A$.  	
\end{proof} 
  
  \begin{remark} \label{map}
  	$(i)$ Since there is a
  	${\rm Rad}_\pi(\Gamma)$-invariant state on $B(H_\pi)$, which restricts to a 
  	${\rm Rad}_\pi(\Gamma)$-invariant state on any $\pi$-boundary $A$. By normality of 
  	${\rm Rad}_\pi(\Gamma)$ and proximality of $\Gamma\curvearrowright A$,  ${\rm Rad}_\pi(\Gamma)$ acts trivially on $A$. In particular, ${\rm Rad}_\pi(\Gamma)$ is contained in $N_\pi:=\bigcap_{A}\ker(\Gamma\curvearrowright A)$, where the intersection runs over all $\pi$-boundaries $A$.
  	
  	$(ii)$ As the proof of Corollary \ref{inv} reveals, we do not need a $\Gamma$-embedding: $A\hookrightarrow \mathcal{B}_{\pi}$, but just  a $\Gamma$-map: $A\to B(H_\pi)$. Also we only need to assume that $C^*_\pi(\Gamma)$  has an amenable trace. When $\pi=\lambda$, all traces on $C^*_\lambda(\Gamma)$ (if any) are automatically amenable, and indeed, if there is a trace, then $C^*_\lambda(\Gamma)$ is nuclear \cite[Proposition 4.1.1]{br}.  When $\pi$ is the universal representation, the trivial representation can be regarded as a  locally finite 
  	dimensional trace on $C^*_{\rm max}(\Gamma)$. 
  	There is another trace on  $C^*_{\rm max}(\Gamma)$ coming from the left regular representation 
  	and, in contrast to the reduced  case, this trace is also amenable for a 
  	large class of non-amenable discrete groups, e.g., for residually finite groups \cite[Proposition 4.1.4]{br}.   
  	\end{remark}
  
  \begin{example} \label{ex4}
  	$(i)$ Let $(\Gamma, A,\alpha)$ be a $C^*$-dynamical system with $A$ unital. A covariant representation of $(\Gamma, A,\alpha)$ is a triple $(\theta, \pi, H)$ where $\theta$ and $\pi$
  	 are respectively a representation of $A$ and a unitary representation of $\Gamma$ in
  	the same Hilbert space $H$, satisfying the covariance rule
  	$$\pi_t\theta(a)\pi_t^* = \theta(\alpha_t(a)),$$
  	for $t\in \Gamma$ and $a\in A$. The covariance condition simply means that $\theta: A\to B(H)$ is a $\Gamma$-map w.r.t. $\alpha$-action on $A$ and Ad$_\pi$-action on $B(H)$. It now follows from Remark \ref{map}$(ii)$ that $A$ has a $\Gamma$-invariant state when $\C^*_\pi(\Gamma)$ has an amenable trace.   
  	
  	$(ii)$ A covariant representation $(\theta, \pi, H)$ gives rise to a *-homomorphism $\theta\rtimes \pi$ from $A[\Gamma]$
  	into $B(H)$, defined by
  	$$(\theta\rtimes \pi)(\sum a_tt) =\sum \theta(a_t)\pi_t.$$
  	Let $A\rtimes_{(\theta,\pi)}\Gamma$ be the $C^*$-algebra generated by the range of this integrated representation, then $A\rtimes_{(\theta,\pi)}\Gamma$ is $\Gamma$-$C^*$-subalgebra of $B(H)$ under Ad$_\pi$-action, and so it  has a $\Gamma$-invariant state when $\C^*_\pi(\Gamma)$ has an amenable trace.
  	
  	$(iii)$  When the boundary $\mathcal{B}_{\pi}$ is C*-embeddable, we let $\Gamma$ act on $\mathcal{B}_{\pi}$ by Ad$_\pi$, and so the embedding $\iota: \mathcal{B}_{\pi}\hookrightarrow B(H_\pi)$   would be a $\Gamma$-map. With the notations of Theorem \ref{emb}, let $\tilde{\mathcal{B}}_{\pi}$ denote the C*-algebra generated by $\mathcal{B}_{\pi}\cup \pi(\Gamma)$
  	in $B(H_\pi)$, then $\tilde{\mathcal{B}}_{\pi}=\mathcal{B}_{\pi}\rtimes_{(\iota,\pi)}\Gamma$. In particular, $\tilde{\mathcal{B}}_{\pi}$ has a $\Gamma$-invariant state when $\C^*_\pi(\Gamma)$ has an amenable trace.
  \end{example}

Next we discuss uniqueness of boundary maps. In Example \ref{ex4}$(i)$, following \cite[Definition 4.1]{ks}, a nondegenerate covariant representation  $(\theta, \pi, H)$ is called {\it germinal} if 
$$\pi_s\theta(a)=\theta(a)\ \ {\rm whenever} \ {\rm supp}(a)\subseteq \mathcal P^s(A),$$
  or equivalently, if this holds for any $a\in A$ with ${\rm supp}(a)\subseteq \mathcal P_0^s(A)$ (as the latter subset of $A$ is dense in the former).
  
  \begin{example} \label{ger}
  	$(i)$ The covariant representation $(\kappa_\nu, M)$ is germinal, where $\nu$ is a $\sigma$-finite quasi-invariant measure on a compact $\Gamma$-space $X$, $\kappa_\nu$ is the Koopman representation of $\Gamma$ on $L^2(X, \nu)$ and $M: C(X) \to B(L^2(X, \nu))$
  	is the multiplication representation \cite[Proposition 4.3]{ks}.

  	$(ii)$ If $\Lambda\leq \Gamma$ and $\omega$ is a $\Lambda$-invariant state on a $\Gamma$-C*-algebra $A$, then The pair $(\lambda_{\Gamma/\Lambda}, P_\omega)$ is a covariant pair whenever $\Gamma^0_\omega\leq \Lambda\leq \Gamma_\omega$, where 
  	$$P_\omega: A\to \ell^\infty(\Gamma/\Lambda); \ \ P_\omega(a)(t\Lambda) =\langle \omega, t^{-1}\cdot a\rangle \ (t\in\Gamma, a\in A),$$
  	 is the Poisson map: given $s,t\in \Gamma$ and $a\in A$  with ${\rm supp}(a)\subseteq \mathcal P_0^s(A),$ we have 
  	 $$P_\omega(a)(t\Lambda) =\langle \omega, t^{-1}\cdot a\rangle=\langle t\omega,  a\rangle=0,$$ if $t\omega\notin\mathcal P_0^s(A).$ In this case, we also have $st\omega\notin s\mathcal P_0^s(A)=\mathcal P_0^s(A),$ where the last equality follows from the fact that $\omega\mapsto s\omega$ is a homeomorphism. Thus,   $$\lambda_{\Gamma/\Lambda}(s)P_\omega(a)(t\Lambda)=P_\omega(a)(st\Lambda)=\langle st\omega,  a\rangle=0=P_\omega(a)(t\Lambda).$$ 
  	 On the other hand, if $t\omega\in\mathcal P_0^s(A)$, $t^{-1}st$ fixes the weak$^*$-open neighborhood $t^{-1}\mathcal P_0^s(A)$ of $\omega$, that is, $t^{-1}st\in \Gamma_\omega^0\subseteq\Lambda$, and again  we have, $$\lambda_{\Gamma/\Lambda}(s)P_\omega(a)(t\Lambda)=P_\omega(a)(st\Lambda)=P_\omega(a)(t\Lambda).$$
  \end{example}

The next result is one of the main results of this section, which extends \cite[Theorem 4.4]{ks} for the case $\pi=\lambda$ and $A=C(X)$ (the extension being non trivial, as the proof in \cite{ks} uses extensively the fact that $\mathcal B_\lambda=C(\partial_F\Gamma)$ is commutative). 

\begin{theorem}
	 Let $A$ be a regular embeddable $\pi$-boundary and $(\theta, \pi, H)$ be a germinal covariant representation, then there is a unique  boundary map $\psi:  A\rtimes_{\theta \rtimes\pi}\Gamma\to \mathcal B_\pi$, whose restriction to $C^*_\pi(\Gamma)$ is a
	unique  boundary map on $C^*_\pi(\Gamma)$.
	\end{theorem}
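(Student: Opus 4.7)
The plan is to establish existence by appealing to $\pi$-injectivity of $\mathcal{B}_\pi$, then attack uniqueness on the crossed product by combining three ingredients already available: uniqueness of $\Gamma$-maps out of a regular $\pi$-boundary (Lemma \ref{unique}), the multiplicative-domain trick on $\theta(A)$ fed by the germinal condition, and the support control from Lemma \ref{key}. Uniqueness on $C^*_\pi(\Gamma)$ will then drop out as a formal consequence.

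For existence, note that $A\rtimes_{\theta\rtimes\pi}\Gamma$ sits as a $\Gamma$-invariant subspace of $B(H_\pi)$ under $\mathrm{Ad}_\pi$. Using embeddability to identify $A$ with its image $\iota(A)\subseteq \mathcal{B}_\pi$, the $*$-isomorphism $\iota\circ\theta^{-1}:\theta(A)\to\mathcal{B}_\pi$ is a $\Gamma$-map, and $\pi$-injectivity (property $(v)$ in Section \ref{bdry}) extends it to a $\Gamma$-map $\psi:A\rtimes_{\theta\rtimes\pi}\Gamma\to\mathcal{B}_\pi$.

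For uniqueness, suppose $\psi_1,\psi_2:A\rtimes_{\theta\rtimes\pi}\Gamma\to\mathcal{B}_\pi$ are two boundary maps. First, precomposing with $\theta$ produces $\Gamma$-maps $A\to\mathcal{B}_\pi$ that must coincide with $\iota$ by Lemma \ref{unique}, so $\psi_1=\psi_2$ on $\theta(A)$; moreover, each $\psi_i$ restricted to $\theta(A)$ is a $*$-homomorphism, placing $\theta(A)$ inside the multiplicative domain of $\psi_i$. Second, I would extend each $\psi_i$ to a $\Gamma$-map $\tilde\psi_i:B(H_\pi)\to\mathcal{B}_\pi$ via $\pi$-injectivity and apply Lemma \ref{key} with $\sigma=\pi$ and $\phi$ the composition $A\hookrightarrow\mathcal{B}_\pi\hookrightarrow B(H_\pi)$ (regularity of $A$ is exactly what the hypothesis of Lemma \ref{key} demands) to obtain, for every $s\in\Gamma$,
$$\mathrm{supp}\bigl(\tilde\psi_i(\pi_s)\bigr)\subseteq \overline{b_{A*}^{-1}(\mathcal P^s_0(A))}.$$
Third, for any $a\in A$ with $\mathrm{supp}(a)\subseteq \mathcal P^s_0(A)$, the germinal identity $\pi_s\theta(a)=\theta(a)$ combined with multiplicativity of $\tilde\psi_i$ on $\theta(A)$ yields $\tilde\psi_i(\pi_s)\,\iota(a)=\iota(a)$, independent of $i$. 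Since such $\iota(a)$ have supports exhausting the open set $b_{A*}^{-1}(\mathcal P^s_0(A))$, one reads off that $\tilde\psi_1(\pi_s)-\tilde\psi_2(\pi_s)$ vanishes on a dense subset of its own support and is therefore zero. Because $\theta(A)$ and $\pi(\Gamma)$ generate $A\rtimes_{\theta\rtimes\pi}\Gamma$, we conclude $\psi_1=\psi_2$.

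The second half of the statement is then automatic: any boundary map $C^*_\pi(\Gamma)\to\mathcal{B}_\pi$ extends by $\pi$-injectivity to a boundary map on $A\rtimes_{\theta\rtimes\pi}\Gamma$, which is unique by the above, so restricting back recovers a unique boundary map on $C^*_\pi(\Gamma)$. The hardest step will be the last move in the uniqueness argument: passing from $\tilde\psi_i(\pi_s)\iota(a)=\iota(a)$ on the $\iota(a)$ supported in the open set $b_{A*}^{-1}(\mathcal P^s_0(A))$, together with the closed support constraint, to the genuine equality $\tilde\psi_1(\pi_s)=\tilde\psi_2(\pi_s)$ in the possibly noncommutative algebra $\mathcal{B}_\pi$. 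In the classical commutative setting of \cite{ks} ($\mathcal{B}_\lambda=C(\partial_F\Gamma)$) continuity plus density does the job, but here one may have to pass to $\mathcal{B}_\pi^{**}$ and argue with the support projection of $\iota(A)^{**}$ inside $\mathcal{B}_\pi^{**}$ (echoing the technique used in Theorem \ref{emb}) to convert the approximate pointwise vanishing into an operator identity.
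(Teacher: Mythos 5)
Your architecture matches the paper's: existence via $\pi$-injectivity, and uniqueness from the three ingredients you name (Lemma \ref{unique} forcing agreement on $\theta(A)$ and placing it in the multiplicative domain, the germinal identity $\pi_s\theta(a)=\theta(a)$, and the support bound of Lemma \ref{key}). But the step you yourself flag as the hardest one is a genuine gap, and your proposed escape route (passing to $\mathcal{B}_\pi^{**}$ and support projections) is not what is needed and is not obviously workable. The operator identity $\tilde\psi_i(\pi_s)\,\iota(a)=\iota(a)$ does not by itself let you ``read off'' anything about $\tilde\psi_1(\pi_s)-\tilde\psi_2(\pi_s)$: to convert it into information about the scalar $\langle\omega,\tilde\psi_i(\pi_s)\rangle$ for a pure state $\omega$, you must be able to factor $\langle\omega,\tilde\psi_i(\pi_s)\,b\rangle$ as $\langle\omega,\tilde\psi_i(\pi_s)\rangle\langle\omega,b\rangle$, i.e.\ you need $b=\iota(a)$ to lie in the \emph{multiplicative domain of the state} $\omega$, which is not given.

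The paper closes this with a Cauchy--Schwarz argument that you are missing. Fix $s$ and $\omega\in\Delta_s:=b_{A*}^{-1}(\mathcal P_0^s(A))$, and choose $a$ with $\|a\|=1$, $\langle b_{A*}(\omega),a\rangle=1$ and $\mathrm{supp}(a)\subseteq\mathcal P^s(A)$; set $b=b_A(a)$, so $\|b\|=1$ and the germinal condition plus multiplicativity give $\psi(\pi_s)b=b$, hence $\langle\omega,\psi(\pi_s)b\rangle=\langle\omega,b\rangle=1$. Since $\|\psi(\pi_s)\|\le1$, Cauchy--Schwarz forces $\langle\omega,\psi(\pi_s)^*\psi(\pi_s)\rangle=\langle\omega,b^*b\rangle=1$, and a symmetric argument gives $\langle\omega,bb^*\rangle=1$; together with $\langle\omega,b\rangle=1$ this shows $b$ lies in the multiplicative domain of $\omega$, whence $1=\langle\omega,\psi(\pi_s)b\rangle=\langle\omega,\psi(\pi_s)\rangle\langle\omega,b\rangle=\langle\omega,\psi(\pi_s)\rangle$. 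Thus every boundary map satisfies $\langle\omega,\psi(\pi_s)\rangle=1$ on $\overline{\Delta_s}$ and, by Lemma \ref{key}, $\langle\omega,\psi(\pi_s)\rangle=0$ off $\overline{\Delta_s}$; since pure states separate the points of $\mathcal B_\pi$, the element $\psi(\pi_s)$ is completely determined, and uniqueness on $C^*_\pi(\Gamma)$ (hence, by your multiplicative-domain decomposition, on the crossed product) follows with no detour through the bidual. You should replace your final paragraph with this state-level determination of $\psi(\pi_s)$; the rest of your proposal is sound.
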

\begin{proof}
 Since $A$ is an embeddable  $\pi$-boundary, we may chosse a $\Gamma$-embedding $b_A: A\hookrightarrow\mathcal B_\pi$ which is also a $*$-homomorphism w.r.t. the Choi-Effros product on $\mathcal B_\pi$. By $\pi$-injectivity, this extends to a $\Gamma$-map $\psi: B(H_\pi)\to \mathcal B_\pi$. By regularity of $A$ and Lemma \ref{unique}, $\psi\circ\theta=b_A$. In particular, $\theta(A)$ is contained in the multiplicative domain of
 $\psi$, therefore by \cite[Proposition 1.5.7(2)]{bo},
  $$\psi\big(\sum_{s} \theta(a_s)\pi_s\big)=\sum_{s}\psi(\theta(a_s))\psi(\pi_s)=\sum_{s}b_A(a_s)\psi(\pi_s),$$
  for each finite sum over $\Gamma$, which shows that the restriction of $\psi$ on $A\rtimes_{\theta \rtimes\pi}\Gamma$ is unique if its restriction to $C^*_\pi(\Gamma)$ is so. Conversely, Since any boundary map on $C^*_\pi(\Gamma)$ extends to a boundary map on $A\rtimes_{\theta \rtimes\pi}\Gamma$. It remains to show that the restriction of $\psi$ to $C^*_\pi(\Gamma)$ is indeed unique. 
  
  The $\Gamma$-map $b_A: A\to \mathcal B_\pi$ induces a $\Gamma$-map $b_{A*}: \mathcal S(\mathcal B_\pi)\to \mathcal S(A)$, which sends pure states to pure states by regularity. Let $s\in \Gamma$ and $\omega\in b_{A*}^{-1}( \mathcal P_0^s(A))=:\Delta_s$. By Hahn-Banach theorem, we may choose $a\in A$ with $\|a\|=1$ such that $$\langle b_{A*}(\omega), a\rangle=1, \ \ {\rm supp}(a)\subseteq \mathcal P^s(A).$$
  Put $b:=b_A(a)$, then $\|b\|=1$, as $b_A$ is an injective $*$-homomorphism (and so an isometry). Since $(\theta, \pi, H)$ is  germinal by assumption, $\pi_s\theta(a)=\theta(a)$. Therefore, 
  $$\psi(\pi_s)b=\psi(\pi_s)b_A(a)=\psi(\pi_s)\psi(\theta(a))=\psi(\pi_s\theta(a))=\psi(\theta(a))=b_A(a)=b,$$
 where the third equality follows from the fact that the range of $\theta$ is in the multiplicative domain of $\psi$. Evaluating $\omega$ on both sides, we get $$\langle\omega,\psi(\pi_s)b\rangle=\langle\omega, b\rangle=\langle\omega, b_A(a)\rangle	=\langle b_{A*}(\omega), a\rangle=1.$$
 Since $\psi$ is u.c.p. and so  contractive, $\|\psi(\pi_s)\|\leq 1$. By Cauchy-Schwartz inequality,
 $$1=\langle\omega,\psi(\pi_s)b\rangle\leq\langle\omega,\psi(\pi_s)^*\psi(\pi_s)\rangle^{\frac{1}{2}}\langle \omega, b^*b\rangle^{\frac{1}{2}}\leq\|\psi(\pi_s)\|^2\|b\|^2\leq 1.$$
 Thus $\langle\omega,\psi(\pi_s)^*\psi(\pi_s)\rangle\langle \omega, b^*b\rangle=1$. The product of two positive real numbers, each at most one, is equal to one, and so both are one, that is, $\langle\omega,\psi(\pi_s)^*\psi(\pi_s)\rangle=\langle \omega, b^*b\rangle=1$. Since  $\langle\omega,\psi(\pi_s)b\rangle=1$ is a real number, we have $\langle\omega,b^*\psi(\pi_s)^*\rangle=1$. Repeating the above argument, we also get  $\langle\omega, bb^*\rangle=1$. Since $ \langle\omega, b\rangle=1$, these together mean that $b$ is in the multiplicative domain of $\omega$. Therefore, $1= \langle\omega,\psi(\pi_s)b\rangle=\langle\omega,\psi(\pi_s)\rangle\langle\omega,b\rangle=\langle\omega,\psi(\pi_s)\rangle$. 
 Summing up, $\langle\omega,\psi(\pi_s)\rangle=1$, for  $\omega\in\Delta_s$, and so $\langle\omega,\psi(\pi_s)\rangle=1$, for   $\omega\in\bar{\Delta_s}$, where $\bar{\Delta_s}$ is the closure of $\Delta_s$ in weak$^*$-topology. By Lemma \ref{key} (for
 $\sigma=\pi$, $\phi=\theta$, $b_\pi^\pi=\psi|_{C^*_\pi(\Gamma)}$), we have ${\rm supp}(\psi(\pi_s))\subseteq \bar{\Delta_s}$, that is, $\langle\omega,\psi(\pi_s)\rangle=0$, for   $\omega\notin\bar{\Delta_s}$. Since pure states of the unital C*-algebra $\mathcal B_\pi$ separate the points, this completes the proof. \qed     
\end{proof} 

We need the next lemma proved by Archbold and Spielberg \cite[Lemma 2]{as}. Recall that two representations of a C*-algebra $A$ are called {\it disjoint} if they have  non trivial equivalent subrepresentations \cite[5.2.2]{di}. 

\begin{lemma} \label{vanish}
	Let $A$ be a $\Gamma$-C*-algebra with action $\alpha$, $\rho: A\to B(H)$ be a
	non degenerate representation of $A$, and $\phi: A\rtimes\Gamma\to B(H)$ be a c.c.p. map extending $\theta$ on the maximal crossed product. If $\rho$ and $\rho\circ \alpha_t$ are disjoint representations, then  $\phi(a\delta_t)=0$, for
	each $a\in A$, where $a\delta_t$ is a singly supported function in $c_c(\Gamma, A)\subseteq A\rtimes\Gamma$.
\end{lemma}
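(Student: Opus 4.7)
The plan is to combine the multiplicative domain of $\phi$ with a Stinespring dilation, in order to reduce the vanishing of $\phi(a\delta_t)$ to the standard fact that disjoint representations admit no non-zero intertwiner.

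First, since $\phi$ restricts to the $*$-homomorphism $\rho$ on $A$, the subalgebra $A\subseteq A\rtimes\Gamma$ is contained in the multiplicative domain of $\phi$. I would exploit this through a minimal Stinespring dilation $(K,\sigma,V)$ of $\phi$, with $V:H\to K$ an isometry (after unitizing $A$ if it is non-unital, so that $\delta_t$ makes sense as an element of the crossed product). Writing $\pi:=\sigma|_A$ and $u(t):=\sigma(\delta_t)$, the pair $(\pi,u)$ is a covariant representation, so $u(t)\pi(b)=\pi(\alpha_t(b))u(t)$ for every $b\in A$; and the multiplicativity of $V^*\pi(\cdot)V=\rho$ forces the projection $P=VV^*$ to commute with $\pi(A)$, equivalently $\pi(b)V=V\rho(b)$ for every $b\in A$.

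Next, set $W:=V^*u(t)V\in B(H)$. A short calculation using $P\in\pi(A)'$ shows that for every $a\in A$,
\[
\phi(a\delta_t)=V^*\pi(a)u(t)V=V^*\pi(a)P\,u(t)V=V^*\pi(a)V\cdot V^*u(t)V=\rho(a)\,W,
\]
while invoking the covariance relation yields, for every $b\in A$,
\[
\rho(\alpha_t(b))\,W=V^*\pi(\alpha_t(b))u(t)V=V^*u(t)\pi(b)V=V^*u(t)V\cdot\rho(b)=W\,\rho(b).
\]
Thus $W$ is a bounded intertwiner between $\rho$ and $\rho\circ\alpha_t$.

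The disjointness hypothesis then implies, via the standard fact (see \cite[5.2.2]{di}) that the space of intertwiners between two disjoint representations is trivial, that $W=0$, and consequently $\phi(a\delta_t)=\rho(a)W=0$ for every $a\in A$. The main technical point requiring care is the possibly non-unital $A$: I would handle it by unitizing $A$ and extending $\alpha$ and $\rho$ unitally (disjointness is preserved), so the Stinespring setup above applies verbatim; alternatively, one could replace $a\delta_t$ by $(ae_\lambda)\delta_t$ for an approximate identity $(e_\lambda)\subseteq A$ and pass to the limit using non-degeneracy of $\rho$.
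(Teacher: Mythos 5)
The paper does not actually prove this lemma---it is quoted verbatim from Archbold--Spielberg \cite[Lemma 2]{as}---so there is no in-paper argument to compare against; your proposal supplies a genuine, self-contained proof. The argument is correct in the setting the paper needs (recall that in Section \ref{ncbdry} all $\Gamma$-C*-algebras are unital, and the ``$\theta$'' in the statement is a typo for $\rho$, which you have rightly read through): since $\phi|_A=\rho$ is multiplicative, the Schwarz inequality for the c.c.p.\ map $\phi$ puts $A$ in its multiplicative domain, the Stinespring relation $\pi(b)V=V\rho(b)$ follows, and your two displayed computations correctly exhibit $W=V^*u(t)V=\phi(\delta_t)$ as an element of $\mathrm{Hom}(\rho,\rho\circ\alpha_t)$, which vanishes by disjointness; this is exactly the mechanism of the original Archbold--Spielberg proof. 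The only soft spot is your treatment of non-unital $A$: ``unitizing $A$'' is not innocent, because $\phi$ is given only on $A\rtimes\Gamma$ and an Arveson extension to $A^{+}\rtimes\Gamma$ need not remain multiplicative on $A^{+}$; and replacing $a\delta_t$ by $(ae_\lambda)\delta_t$ achieves nothing by itself, since $(ae_\lambda)\delta_t\to a\delta_t$ in norm. The correct repair is to define $W$ as a weak-operator cluster point of the bounded net $\phi(e_\lambda\delta_t)$ and use the multiplicative-domain identities $\rho(b)\phi(e_\lambda\delta_t)=\phi((be_\lambda)\delta_t)$ and $\phi(e_\lambda\delta_t)\rho(b)=\phi((e_\lambda\alpha_t(b))\delta_t)$ to pass to the limit and obtain $\rho(\alpha_t(b))W=W\rho(b)$ together with $\rho(a)W=\phi(a\delta_t)$; this bypasses the dilation of $\delta_t$ entirely. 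Since $A$ is unital throughout this paper, the issue is moot here, but you should either restrict the statement to unital $A$ or replace the unitization remark by the approximate-unit argument just described.
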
	

\begin{proposition} \label{vanish2}
	Let $(\theta, \pi, H)$ be a  covariant representation of C*-dynamical system $(\Gamma, A,\alpha)$ and let $\theta\rtimes\pi: A\rtimes\Gamma\to B(H)$ be the corresponding integrated representation of the maximal (full) crossed product. Let $A\rtimes_{\theta \rtimes\pi}\Gamma$ be the C*-subalgebra of $B(H)$ generated by the range of $\theta\rtimes\pi$, and $q_\theta: A\rtimes\Gamma \to A\rtimes_{\theta \rtimes\pi}\Gamma$ be the corresponding quotient map. 
	
	$(i)$ If $\theta$ and $\theta\circ \alpha_t$ are disjoint representations, then  $(\theta \rtimes\pi)(a\delta_t)=0$, for each $a\in A$.
	
	$(ii)$  If $\theta$ is faithful and the action $\Gamma\curvearrowright A$ is topologically free, there is a conditional expectation $\mathbb E_\theta: A\rtimes_{\theta \rtimes\pi}\Gamma\to \theta(A)$ such that the following diagram  commutes,

	\begin{center}
		\begin{tikzcd}
			A\rtimes\Gamma \arrow{r}{\theta \rtimes\pi} \arrow{d}[swap]{\mathbb E} &A\rtimes_{\theta \rtimes\pi}\Gamma \arrow{d}{\mathbb E_\theta} \\   
			A \arrow[swap]{r}{\theta} & \theta(A)
		\end{tikzcd}
	\end{center}  
	where $\mathbb E: A\rtimes\Gamma\to A$ is the canonical conditional expectation on the maximal (full) crossed product.
	
	$(iii)$ When  $\theta$ is faithful and the action $\Gamma\curvearrowright A$ is topologically free, for each closed ideal $I\unlhd  A\rtimes_{\theta \rtimes\pi}\Gamma$, if $I\cap \theta(A)=0$, then $I\subseteq \ker \mathbb E_{\theta}$.  
	
\end{proposition}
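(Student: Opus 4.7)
The plan is to prove the three parts sequentially: (i) is an immediate consequence of Lemma \ref{vanish}, (ii) constructs $\mathbb E_\theta$ via the commuting-diagram formula with well-definedness secured by an Archbold-Spielberg style norm inequality, and (iii) is reduced by averaging to the observation that $\mathbb E_\theta(I)$ lies in $\theta(A)\cap I$.

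For part (i), I would apply Lemma \ref{vanish} with $\rho=\theta$ and $\phi=\theta\rtimes\pi$: the integrated representation $\theta\rtimes\pi$ is a $*$-homomorphism (hence c.c.p.), extends $\theta$ on the canonical copy of $A$ in $A\rtimes\Gamma$, and by hypothesis $\theta$ and $\theta\circ\alpha_t$ are disjoint, so the lemma delivers $(\theta\rtimes\pi)(a\delta_t)=0$ for every $a\in A$. For part (ii), I would define $\mathbb E_\theta$ on the dense $*$-subalgebra $(\theta\rtimes\pi)(c_c(\Gamma,A))$ by
\[
\mathbb E_\theta\Bigl(\sum_{t\in F}\theta(a_t)\pi_t\Bigr):=\theta(a_e),
\]
and extend by continuity. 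The commuting diagram is then built in by construction. Well-definedness reduces, via faithfulness of $\theta$, to the norm estimate $\|a_e\|\le \bigl\|\sum_t\theta(a_t)\pi_t\bigr\|_{A\rtimes_{\theta\rtimes\pi}\Gamma}$, which is the classical Archbold-Spielberg inequality proved using topological freeness: for each finite $F\subseteq\Gamma\setminus\{e\}$ and $\varepsilon>0$, topological freeness and Kadison transitivity furnish positive contractions $f_t\in\theta(A)$ whose supports avoid the fixed-point sets of $\alpha_t$, and a convex-combination-of-conjugates construction built from these $f_t$ kills the off-diagonal terms to within $\varepsilon$ while preserving the $e$-term in norm. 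Contractivity then extends $\mathbb E_\theta$ to all of $A\rtimes_{\theta\rtimes\pi}\Gamma$, and idempotence, positivity and the module property over $\theta(A)$ pass from the dense subalgebra to the closure, making $\mathbb E_\theta$ a conditional expectation.

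For part (iii), the strategy is to show $\mathbb E_\theta(I)\subseteq I$: combined with $\mathbb E_\theta(I)\subseteq\theta(A)$ and the hypothesis $I\cap\theta(A)=0$, this forces $\mathbb E_\theta(I)=\{0\}$, i.e., $I\subseteq\ker\mathbb E_\theta$. To establish $\mathbb E_\theta(I)\subseteq I$, I would reuse the convex-combinations-of-conjugates construction from (ii): given $x\in I$ in the dense $*$-subalgebra and $\varepsilon>0$, the averaged conjugates of $x$ by products of the peaking contractions $f_t\in\theta(A)$ all lie in $I$ because $I$ is a two-sided ideal of $A\rtimes_{\theta\rtimes\pi}\Gamma$, and they approximate $\mathbb E_\theta(x)$ to within $\varepsilon$ in norm. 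Norm-closedness of $I$ then forces $\mathbb E_\theta(x)\in I$, and continuity of $\mathbb E_\theta$ together with density of the $*$-subalgebra extends the inclusion to all of $I$.

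The main obstacle in both (ii) and (iii) is the uniform construction of the peaking contractions $f_t\in\theta(A)$ and the associated convex-combination estimate that simultaneously kills the off-diagonal terms for all $t\in F\setminus\{e\}$ while preserving the diagonal $e$-term. This is the classical Archbold-Spielberg crux, and the two ingredients that make it work here are topological freeness (which supplies pure states separating $\alpha_t$-orbits, so the peaking supports can be chosen disjoint from their $\alpha_t$-translates) and faithfulness of $\theta$ (which ensures $\theta(A)$ inside $A\rtimes_{\theta\rtimes\pi}\Gamma$ has enough contractions to implement the peaking).
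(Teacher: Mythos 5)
Part $(i)$ of your proposal coincides with the paper's proof (both are a direct application of Lemma \ref{vanish} to $\phi=\theta\rtimes\pi$), and your overall skeleton for $(ii)$ and $(iii)$ --- reduce $(ii)$ to the norm inequality $\|a_e\|\le\|\sum_t\theta(a_t)\pi_t\|$, and deduce $(iii)$ from $\mathbb E_\theta(I)\subseteq I$ together with $I\cap\theta(A)=0$ --- is a legitimate alternative organization. However, the engine you propose for both steps, a peaking-contraction/convex-combination-of-conjugates estimate, has a genuine gap in the setting of this proposition: $A$ is an arbitrary (noncommutative) unital C*-algebra, not $C(X)$. Two things break. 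First, even if you produce positive contractions $f\in\theta(A)$ whose ``supports'' (in the pure-state sense the paper uses) are disjoint from their $\alpha_t$-translates, this does \emph{not} force the cross terms $f\,\theta(a_t)\,\alpha_t(f)$ to vanish or be small: $fg=0$ does not imply $fag=0$ when $a$ fails to commute with $f$ and $g$. Second, the ``preservation of the $e$-term'' fails for the same reason: $\|f\theta(a_e)f\|$ can be far below $\|\theta(a_e)\|$ because $f$ need not commute with $\theta(a_e)$; Kadison transitivity lets you fix a unit vector $\xi$ with $\|\theta(a_e)\xi\|\approx\|\theta(a_e)\|$ and $f\xi=\xi$, but it gives no control on $f\theta(a_e)\xi$. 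Making this averaging scheme work for noncommutative $A$ is essentially the Elliott--Kishimoto--Olesen--Pedersen proper-outerness machinery, which is a substantial theorem in its own right and is not supplied by topological freeness ``off the shelf.''

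The paper avoids this entirely. For $(ii)$ it passes to the representation $(\tilde\theta,\tilde\lambda)$ induced on $\ell^2(\Gamma,H)$, observes that compression by the evaluation-at-$e$ partial isometry $V$ gives $V(\tilde\theta\rtimes\tilde\lambda)(x)V^*=\theta(\mathbb E(x))$, and then obtains $\|\theta(\mathbb E(x))\|\le\|(\tilde\theta\rtimes\tilde\lambda)(x)\|\le\|(\theta\rtimes\pi)(x)\|$, the last inequality coming from the kernel inclusion $\ker(\theta\rtimes\pi)\subseteq\ker(\tilde\theta\rtimes\tilde\lambda)$, which is exactly Archbold--Spielberg's Theorem 1 applied to the ideal $\ker(\theta\rtimes\pi)$ (which meets $A$ trivially by faithfulness of $\theta$). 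For $(iii)$ it adapts the Archbold--Spielberg argument directly: given $a\in I$ with $\mathbb E_\theta(a)\neq0$, it uses the dense set of $\sigma\in\hat A$ with $t\sigma\neq\sigma$ for $t$ in a finite set, extends $\sigma\circ\theta^{-1}$ through the quotient by $I$ to a c.p.\ map, and invokes Lemma \ref{vanish} to kill the off-diagonal terms --- i.e., the noncommutativity is handled by a separating family of c.p.\ maps and the disjointness lemma, not by averaging inside the crossed product. If you want to keep your route, you must either restrict to commutative $A$ (where your peaking argument is the standard one) or replace the averaging step by the irreducible-representation/disjointness argument; as written, the central estimate of your $(ii)$ and $(iii)$ is unproved. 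A secondary, fixable slip: in $(iii)$ you apply the averaging to ``$x\in I$ in the dense $*$-subalgebra,'' but $I\cap(\theta\rtimes\pi)(c_c(\Gamma,A))$ need not be dense in $I$, so the approximation has to be arranged with the ideal element and its finite-sum approximant kept separate.
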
 
\begin{proof}
	$(i)$. This follows from Lemma \ref{vanish} for $\phi=\theta \rtimes\pi$. 
	
	$(ii)$ Let  $\tilde H := \ell^2(G, H)$ and $(\tilde\sigma, \tilde\lambda, \tilde H)$ be  
	be the  representation induced by $\theta$, defined by $$\tilde\theta(a)\xi(t) = \theta(\alpha_{t^{-1}}(a))\xi(t),\ \ \tilde\lambda_s\xi(t) =\xi(s^{-1}t),$$
	for $a\in A$ and $\xi\in \tilde H$.
	Since $\theta$ is faithful,  $\Vert x\Vert_{\rm red} =\Vert(\tilde\theta \rtimes\tilde\lambda)(x)\Vert$, for
	each $x\in c_c(\Gamma, A)$ \cite[Definition 7.7]{w}. For $s\in \Gamma$, consider the partial isometry $V: \tilde H\to H; \ \xi\mapsto \xi(e)$, then for $\eta\in H$, $\tilde\lambda_sV^*\eta=\tilde\lambda_s(\eta\delta_e)=\eta\delta_e$, thus for $x = \sum_s a_s\delta_s$, 
	$$ V\big((\tilde\theta \rtimes\tilde\lambda)(x)\big)V^*\eta=V\sum_s \tilde\theta(a_s)\tilde\lambda_sV^*\eta = \sum_s \delta_s(e)\theta(a_s)\eta = \theta(a_e)\eta=\theta(\mathbb E(x))\eta,$$ where 
	$$\mathbb E: A\rtimes\Gamma\to A; \ \ \sum_s a_s\delta_s\mapsto a_e,$$
	is the canonical conditional expectation on the full crossed product. Since the action $\Gamma\curvearrowright A$ is topologically free, for each ideal $I\unlhd  A\rtimes\Gamma$, if $A\cap I=0$, then $I\subseteq \ker(\tilde\theta \rtimes\tilde\lambda)$ \cite[Theorem 1]{as}. Since $\theta$ is faithful, $A\cap\ker(\theta \rtimes\pi)=0$, thus $\ker(\theta \rtimes\pi)\subseteq \ker(\tilde\theta \rtimes\tilde\lambda)$. In particular, there is a well-defined c.p. map  $\mathbb E_\theta: A\rtimes_{\theta \rtimes\pi}\Gamma\to \theta(A)$ satisfying $\mathbb E_\theta\circ(\theta \rtimes\pi)=\theta\circ \mathbb E.$
	Now, for $x\in c_c(\Gamma, A)$,
	$$\|\mathbb E_\theta\big((\theta \rtimes\pi)(x)\big)\|=\|\theta\big(\mathbb E(x)\big)\|=\|V\big((\tilde\theta \rtimes\tilde\lambda)(x)\big)V^*\|\leq \|(\tilde\theta \rtimes\tilde\lambda)(x)\|\leq \|(\theta \rtimes\pi)(x)\|,$$
	where the last equality follows from the fact that $\ker(\theta \rtimes\pi)\subseteq \ker(\tilde\theta \rtimes\tilde\lambda)$. This means that $\mathbb E_\theta$ is c.c.p. Moreover,  it follows from $\mathbb E_\theta(\sum_s \theta(a_s)\pi_s)=\theta(a_e)$ that $\mathbb E_\theta^2=\mathbb E_\theta$, therefore, $\mathbb E_\theta$ is a conditional expectation by Tomiyama's theorem (cf., \cite[Theorem 1.5.10]{bo}). 
	
	$(iii)$ We adapt the proof of \cite[Theorem 1]{as}. Let $I$ be a closed ideal with $I\nsubseteq \ker \mathbb E_{\theta}$. Choose $a\in I$ with $\mathbb E_{\theta}(a)\neq 0$. Choose $b\in \sum_{s\in F} b_s\delta_s\in c_c(\Gamma, A)$ with $\|(\theta\rtimes \pi)(b)-a\|<\frac{1}{2}\|\mathbb E_{\theta}(a)\|.$ Let $X:=\bigcap_{s\in F\backslash\{e\}} \{\sigma\in \hat A: t\sigma\neq \sigma\}$, and for $\sigma\in X$, let $\tilde\sigma$ be $*$-homomorphism obtained as the composition of maps
	$$\theta(A)+I\to \frac{\theta(A)+I}{I}\simeq \frac{\theta(A)}{\theta(A)\cap I}=\theta(A)\xrightarrow[]{\theta^{-1}} A \xrightarrow[]{\sigma} B(H_\sigma),$$
	where $\theta(A)+I$ is closed (and so a C*-subalgebra) in $A\rtimes_{\theta \rtimes\pi}\Gamma$, as the sum of a C*-subalgebra and a closed ideal \cite[1.8.4]{di}. Extend $\tilde\sigma$ to a c.p. map $\phi: A\rtimes_{\theta \rtimes\pi}\Gamma\to B(H_\sigma)$, and observe that since $\sigma\circ\alpha_t\neq \sigma$, for $t\neq e$, Lemma \ref{vanish} applies to $\phi$ and for $x:=(\theta\rtimes\pi)(b)$, by part $(ii)$ we get,
	\begin{align*}
		\phi(x)=\phi\big((\theta\rtimes\pi)(b)\big)&=\sum_s\phi(\theta(b_s)\pi_s)=\phi(\theta(b_e))\\&=\phi(\theta\circ\mathbb E(b))=\phi\big(\mathbb E_\theta\circ(\theta\rtimes\pi)(b)\big)=\phi(\mathbb E_\theta(x)),
	\end{align*}
	and since $\phi=\sigma$ on $\theta(A)$ and 	
	$\phi(a)=\tilde\sigma(a)=0$,
	$$\|\sigma(\mathbb E_\theta(x))\|=\|\phi(\mathbb E_\theta(x))\|=\|\phi(x)\|=\|\phi(x-a)\|\leq\|x-a\|.$$ Since this holds for each $\sigma\in X$ and $X\subseteq \hat A$ is dense, we get $\|\mathbb E_\theta(x)\|\leq\|x-a\|,$ thus
	$$\|\mathbb E_\theta(a)\|\leq \|\mathbb E_\theta(x-a)\|+\|\mathbb E_\theta(x)\|\leq\|x-a\|+\|x-a\|,$$
	which contradicts the choice of $b$.  \qed	
\end{proof}	

\begin{remark} \label{rem}
	Let  $\pi\in$Rep$(\Gamma)$, then it is desirable to show that $\Gamma$ is $C^*_\pi$-simple iff it has a topologically free $\pi$-boundary. However, at this stage we don't know if this is true. With the notation of the above result, if $\theta$ is fauthful and the action $\Gamma\curvearrowright A$ is topologically free, and moreover the restriction $\psi$ of $\mathbb E_\theta$ to $C^*_\pi(\Gamma)$, considered as a linear scalar valued map is faithful (that is, if $C^*_\pi(\Gamma)_{+}\cap \ker \mathbb E_{\theta}=0$) then it follows from \cite[Proposition 3.1]{ks} that $\Gamma$ is $C^*_\pi$-simple. 
\end{remark}



\end{document}